\newtheorem{theorem}{Theorem}[section]
\newtheorem{lemma}[theorem]{Lemma}
\newtheorem{definition}[theorem]{Definition}
\numberwithin{equation}{section} 
\def\A{\mathcal{A}}
\def\D{\mathcal{D}}
\def\M{\mathcal{M}}
\def\R{\mathbb{R}}
\def\Rn{\mathbb{R}^n}
\def\S{\mathcal{S}}
\def\Z{\mathbb{Z}}
\def\supp{\operatorname{supp}}
\begin{document}

\title[The multilinear Littlewood-Paley operators]
  {Weak and strong type estimates for the multilinear Littlewood-Paley operators}

\author[Cao]{Mingming Cao}
\address{Mingming Cao\\
Instituto de Ciencias Matem\'aticas CSIC-UAM-UC3M-UCM\\
Con\-se\-jo Superior de Investigaciones Cient{\'\i}ficas\\
C/ Nicol\'as Cabrera, 13-15\\
E-28049 Ma\-drid, Spain} \email{mingming.cao@icmat.es}

\author[Hormozi]{Mahdi Hormozi}
\address{Mahdi Hormozi\\
School of Mathematics\\
Institute for Research in Fundamental Sciences (IPM)\\
P. O. Box 19395-5746, Tehran, Iran} \email{me.hormozi@gmail.com}

\author[Iba\~nez-Firnkorn]{Gonzalo Iba\~nez-Firnkorn}
\address{Gonzalo Iba\~nez-Firnkorn\\
FAMAF-CIEM (CONICET),
Universidad Nacional de C\'ordoba,
Medina A\-llen\-de s/n, Ciudad Universitaria\\
C\' ordoba, Argentina.}
\email{gibanez@famaf.unc.edu.ar}

\author[Rivera-R\'ios]{Israel P. Rivera-R\'ios}
\address{Israel P. Rivera-R\'ios\\
Departamento de Matemática, Universidad Nacional del Sur (UNS), Bahía
Blanca, Argentina and INMABB, Universidad Nacional del Sur (UNS)-CONICET,
Alem 1253\\
Bahía Blanca, Argentina.}\email{israelpriverarios@gmail.com}

\author[Si]{Zengyan Si}
\address{Zengyan Si\\
School of Mathematics and Information Science\\
Henan Polytechnic University\\
Jiaozuo 454000\\
People's Republic of China} \email{zengyan@hpu.edu.cn}

\author[Yabuta]{K\^{o}z\^{o} Yabuta}
\address{K\^{o}z\^{o} Yabuta\\
Research Center for Mathematics and Data Science\\
Kwansei Gakuin University\\
Gakuen 2-1, Sanda 669-1337\\
Japan}\email{kyabuta3@kwansei.ac.jp}

\thanks{M. C. acknowledges financial support from the Spanish Ministry of Science and Innovation, through the ``Severo Ochoa Programme for Centres of Excellence in R\&D'' (SEV-2015-0554) and from the Spanish National Research Council, through the ``Ayuda extraordinaria a Centros de Excelencia Severo Ochoa'' (20205CEX001). M. H. is supported by a grant from IPM.  G. I.-F. is partially supported by CONICET and SECYT-UNC. I. P. R.-R. is partially supported by CONICET PIP 11220130100329CO and Agencia I+D+i PICT 2018-02501. Z. S. is supported partly by the Key Research Project for Higher Education in Henan Province(No.19A110017) and the Fundamental Research Funds for the Universities of Henan Province(No.NSFRF200329).}

\subjclass[2010]{42B20, 42B25.}
\keywords{Multilinear square functions,
Bump conjectures,
Mixed weak type estimates,
Local decay estimates,
Aperture dependence}

\date{October 22, 2020}

\begin{abstract}
Let $S_{\alpha}$ be the multilinear square function defined on the cone with aperture $\alpha \geq 1$. In this paper, we investigate several kinds of weighted norm inequalities for $S_{\alpha}$. We first obtain a sharp weighted estimate in terms of aperture $\alpha$ and $\vec{w} \in A_{\vec{p}}$. By means of some pointwise estimates, we also establish two-weight inequalities including bump and entropy bump estimates, and Fefferman-Stein inequalities with arbitrary weights. Beyond that, we consider the mixed weak type estimates corresponding Sawyer's conjecture, for which a Coifman-Fefferman inequality with the precise $A_{\infty}$ norm is proved. Finally, we present the local decay estimates using the extrapolation techniques and dyadic analysis respectively. All the conclusions aforementioned hold for the Littlewood-Paley $g^*_{\lambda}$ function. Some results are new even in the linear case.
\end{abstract}

\maketitle

\section{Introduction}\label{sec:intro}

Given $\alpha>0$, let $S_{\alpha}$ be the square function defined by
$$
S_{\alpha}(f)(x)=\bigg(\iint_{\Gamma_\alpha(x)}|f* \psi_t(f)(y)|^2\frac{dydt}{t^{n+1}}\bigg)^{\frac12},
$$
 where $\psi_t(x)=t^{-n} \psi(x/t)$ and $\Gamma_{\alpha}(x)$ is the cone at vertex $x$ with aperture $\alpha$. Lerner \cite{Ler11}, by applying the intrinsic square function introduced in \cite{Wil}, proved sharp weighted norm inequalities for $S_{\alpha}(f)$. Later on, he improved the result in the sense of determination of sharp dependence on $\alpha$ in \cite{Ler14} by using the local mean oscillation formula. More precisely,
 \begin{equation}
\label{eq:sharp_lsq}
  \|S_{\alpha}\|_{L^p(w)\to L^p(w)}
\lesssim \alpha^n [w]_{A_p}^{\max{\{\frac{1}{2},\frac{1}{p-1}\}}},
\,\,  1<p<\infty.
\end{equation}
The preceding result is among the plenty important results in the fruitful realm of weighted inequalities concerning the precise determination of the optimal bounds of the weighted operator norm of different singular integral operators. We refer the interested reader to \cite{Hy1, Hy2, Lacey, Ler13} and the references therein for a survey on the advances on the topic.

 Let us recall the definition of multilinear square functions considered in this paper. The standard kernel for multilinear square functions was introduced in \cite{SXY}. Let $\psi(x,\vec{y}):=\psi(x, y_1, \ldots, y_m)$ be a locally integrable function defined away from the diagonal $x =y_1=\cdots =y_m$ in $(\mathbb{R}^n)^{m+1}$. We assume that there are positive constants $\delta$ and $A$ so that the following conditions hold:

\begin{itemize}
\item Size condition: $$|\psi(x, \vec{y})| \leq \frac{A}{\left( 1+\sum_{i=1}^m |x-y_i|\right)^{mn+\delta}}.$$
\item Smoothness condition: There exists $\gamma>0$ so that
\[
|\psi(x, \vec{y})-\psi(x', \vec{y})|
\leq \frac{A|x-x'|^\gamma}{\left(1+\sum_{i=1}^m |x-y_i|\right)^{mn+\delta+\gamma}},
\]
whenever $|x-x'|< \frac{1}{2}\max_j|x-y_j|$, and
\[
\left|\psi(x, \vec{y})-\psi(x,y_1,\ldots,y'_i,\ldots,y_m)\right|
 \leq \frac{A|y_i-y'_i|^\gamma}{\left( 1+\sum_{i=1}^m |x-y_i|\right)^{mn+\delta+\gamma}},
\]
whenever $|y_i-y'_i|< \frac{1}{2}\max_j|x-y_j|$ for $i=1,2,\dots,m$.
\end{itemize}
For $t>0$, denote $\psi_t$
\[
\psi_t(\vec{f})(x) :=\frac{1}{t^{mn}}\int_{(\Rn)^m}\psi
\Big(\frac{x}{t},\frac{y_1}{t},\cdots, \frac{y_m}{t}\Big)\prod_{j=1}^m f_j(y_j)dy_j,
\]
for all $x\notin \bigcap_{j=1}^m {\rm supp}\, f_j$ and $\vec{f}=(f_1,\ldots,f_m)\in \S(\Rn) \times \cdots \times \S(\Rn)$.

Given $ \alpha>0$ and $\lambda>2m$, the multilinear square functions $S_{\alpha}$ and $g^*_{\lambda}$ are defined by
\begin{align*}
S_{\alpha}(\vec{f})(x) :=\bigg(\iint_{\Gamma_\alpha(x)}|\psi_t(\vec{f})(y)|^2\frac{dydt}{t^{n+1}}\bigg)^{1/2},
\end{align*}
where $\Gamma_\alpha(x)=\{(y,t)\in \mathbb{R}^{n+1}_+: |x-y|<\alpha t\}$, and
\begin{align*}
g^*_{\lambda}(\vec{f})(x) :=\bigg(\iint_{\mathbb{R}^{n+1}_+}\Big(\frac{t}{t+|x-y|}\Big)^{n\lambda}|\psi_t(\vec{f})(y)|^2\frac{dydt}{t^{n+1}}\bigg)^{1/2}.
\end{align*}
Hereafter, we assume that for $\lambda>2m$ there exist some $1\le p_1,\dots, p_m\le \infty$ and some $0<p<\infty$ with $\frac 1p=\frac{1}{p_1}+\cdots+\frac{1}{p_m}$, such that $g^*_{\lambda}$ maps continuously $L^{p_1}(\Rn)\times\cdots\times L^{p_m}(\Rn)$ to $L^p(\Rn)$. Under this condition, it was proved in \cite{SXY} that {\color{red}$g^*_{\lambda}$} maps continuously $L^1(\Rn) \times \cdots \times L^1(\Rn) \rightarrow L^{1/m,\infty}(\Rn)$ provided $\lambda > 2m$. Moreover,
since $S_{\alpha}$ is dominated by $g_{\lambda}^{*}$, we also get that {\color{red}$S_\alpha$} maps continuously $L^1(\Rn) \times \cdots \times L^1(\Rn) \rightarrow L^{1/m,\infty}(\Rn)$.

These two mutilinear square functions were introduced and investigated in \cite{SXY, XY}. Indeed, the theory of multilinear Littlewood-Paley operators originated in the works of Coifman and Meyer \cite{CM}. The multilinear square functions has important applications in PDEs and other fields. In particular, Fabes, Jerison, and Kenig brought very important applications of multilinear square functions in PDEs to the attention. {\color{red}In \cite{FJK1}}, they studied the solutions of Cauchy problem for non-divergence form parabolic equations by obtaining  some multilinear Littlewood-Paley type estimates for the square root of an elliptic operator in divergence form. Also, the necessary and sufficient conditions for absolute continuity of elliptic-harmonic measure were achieved relying upon a multilinear Littlewood-Paley estimate, in \cite{FJK2}. Moreover, in  \cite{FJK3}, they applied a class of multilinear square functions to Kato's problem. For further details on the theory of multilinear square functions and their applications, we refer to \cite{CY, CDM, CMM, CM, FJK1, FJK3} and the references therein.

In this paper, we investigate some weak and strong type estimates for multilinear Littlewood-Paley operators. This kind of inequalities has its origin in classical potential theory. A big breakthrough in understanding Poisson's equation, made by Lichtenstein \cite{Lich} in 1916, raised problems that have been central to analysis over the past decades. The theory of singular integral operators owes its impetus to the change of point of view of potential theory generated by this work. The action of singular integral operators on the standard Lebesgue spaces $L^p(\Rn)$ was for a long time the main object of study. But these operators have natural analogs in which $\Rn$ is replaced by a Lie group or Lebesgue measure on $\Rn$ is replaced by a weighted measure. It is in the setting that our work is focused on.

The contributions of this paper are as follows. Based on the ideas from Fefferman's celebrated paper \cite{F}, in this work, we first prove the upper bound for $S_\alpha$ is sharp  in the aperture $\alpha$ on all class $A_{\vec{p}}$. Secondly, we focus on bump and entropy bump estimates, mixed weak type estimates, local decay estimates,  and multilinear version of Fefferman-Stein inequality with arbitrary weights for multilinear square functions respectively. These interesting estimates have aroused the attention of many researchers. For example, $A_p$ bump conditions may be thought of as the classical two-weight $A_p$ condition with the localized $L^p$ and $L^{p'}$ norms "bumped up" in the scale of Orlicz spaces. These conditions have a long history, we refer to \cite{F2,PW}. Muckenhoupt and Wheeden \cite{MW} first formulated the mixed weak type estimates for Hardy-Littlewood maximal function and the Hilbert transform on the real line although Sawyer \cite{S83} considered a more singular case, namely he showed that if $\mu\in A_1$ and $\nu\in A_\infty$, then

\begin{equation}\label{eq:Sawyer}
\bigg\|\frac{M(f\nu)}{\nu}\bigg\|_{L^{1,\infty}(\mu\nu)} \lesssim\|f\|_{L^1(uv)}
\end{equation}
and conjectured that such an inequality should hold with $M$ replaced by the Hilbert transform. Later on Cruz-Uribe, Martell and P\'{e}rez \cite{CMP} extended Sawyer's result to higher dimensions and also settled Sawyer's conjecture and extended that result for general Calder\'on-Zygmund operators reducing it to the case of maximal functions via an extrapolation argument. That extrapolation argument allowed them to take $\mu\in A_1$ and $\nu\in A_\infty$. That led them to conjecture that \eqref{eq:Sawyer} should hold  $\mu\in A_1$ and $\nu\in A_\infty$. Recently, that conjecture was settled by Li, Ombrosi and P\'{e}rez \cite{LOP}. That result was extended to maximal operators with Young functions \cite{B2}. Analogous results were obtained for commutators \cite{BCP}, fractional operators \cite{BCP2} or in the multilinear setting \cite{LOPi}. Also quantitative estimates have been studied in \cite{CRR} and \cite{OmPR}. Local exponential decay estimates for CZOs and square functions, multilinear pseudo-differential operators and its commutator were studied in \cite{OPR} and \cite{CXY} respectively.

The main results of this paper can be stated as follows. We begin with a sharp weighted inequality in terms of both $\alpha$ and $[\vec{w}]_{A_{\vec{p}}}$.

\begin{theorem}\label{thm:S-sharp}
Let $\alpha\geq 1$ and  $\frac1p=\frac{1}{p_1}+\cdots +\frac{1}{p_m}$ with $1<p_1,\ldots,p_m<\infty$. If $\vec{w} \in A_{\vec{p}}$, then
  \begin{equation}\label{eq:BH}
||S_{\alpha}(\vec{f})||_{L^{p}(\nu_{\vec{w}})} \lesssim  \alpha^{mn} [\vec{w}]_{A_{\vec{p}}}^{\max\{\frac{1}{2},\frac{p_1'}{p},\cdots,\frac{p_m'}{p}\}}\prod_{i=1}^m||f_i||_{L^{p_i}(w_i)},
\end{equation}
where the implicit constant is independent of $\alpha$ and $\vec{w}$. Moreover, \eqref{eq:BH} is sharp in $\alpha$ on all class $A_{\vec{p}}$.
\end{theorem}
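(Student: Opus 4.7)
The plan is to treat the upper bound and the sharpness in $\alpha$ by independent arguments. For the upper bound my strategy is two-step: first reduce to an aperture-one operator with an explicit $\alpha^{mn}$ loss, and then apply the sharp multilinear sparse weighted bound to the resulting $\alpha$-independent object. Concretely, I would establish a pointwise domination
\[
S_{\alpha}(\vec f)(x)\,\lesssim\,\alpha^{mn}\,\mathcal{G}(\vec f)(x),
\]
where $\mathcal{G}$ is a Wilson--Lerner type intrinsic multilinear square function associated with cones of aperture one. The factor $\alpha^{mn}$ arises because, for $(y,t)\in\Gamma_{\alpha}(x)$, the kernel $\psi$ enjoys a size bound in which each of the $m$ inputs $y_i$ interacts independently with $x$, so that rescaling the cone to aperture one loses exactly a factor $\alpha^{n}$ per input; the $L^{2}$-integration in $(y,t)$ is scale-insensitive and does not contribute. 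Alternatively one can iterate the linear cone-comparison argument of \cite{Ler14} once for each coordinate. For $\mathcal{G}$ a pointwise sparse domination holds via the standard principal-cube stopping-time decomposition adapted to the $L^{2}$-valued setting; inserting the sharp multilinear weighted bound for sparse forms produces precisely the exponent $\max\{1/2,\,p_1'/p,\dots,p_m'/p\}$. The value $1/2$ is dictated by the $L^{2}$-valued nature of the operator and already appears at the multilinear $A_1$-type endpoint, while the $p_i'/p$ terms emerge from the multilinear Rubio de Francia extrapolation in each coordinate.

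For the sharpness of $\alpha^{mn}$, I would follow the blueprint of Fefferman \cite{F}. Take $\psi$ of pure tensor form $\psi(x,\vec y)=\prod_{i=1}^{m}\varphi(x-y_i)$ with $\varphi$ a smooth, radial, compactly supported bump of integral one, test against $\vec f=(\chi_{B(0,1)},\dots,\chi_{B(0,1)})$, and choose $\vec w\equiv 1\in A_{\vec p}$ (whose constant is $1$, so the weighted factor plays no role on the right-hand side). The tensor structure of $\psi_t$ factorizes $|\psi_t(\vec f)(y)|$ into $m$ independent scalar convolutions, each of which stays $\gtrsim 1$ on a quantifiable portion of $\Gamma_{\alpha}(x)$ when $x$ lies in a shell of radius $\sim\alpha$. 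A direct estimate of the resulting area integral then yields the unweighted lower bound $\|S_{\alpha}(\vec f)\|_{L^{p}}\gtrsim \alpha^{mn}$, ruling out any smaller power of $\alpha$.

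The main technical difficulty will be the sharp extraction of $\alpha^{mn}$ in the first step. A naive change of variables in the cone integral alone yields only an $\alpha^{n/2}$ gain because part of the aperture dependence is hidden inside the $L^{2}_{(y,t)}$ norm. The correct scaling is obtained only by splitting the cone into dyadic annuli in $|x-y|/t$, invoking the multilinear size condition on $\psi$ (rather than on $\psi_{t}$, which is scale-invariant) on each annulus, and summing geometrically. Once this sharp pointwise reduction is in hand, the passage to the sharp weighted bound via sparse domination and the sharpness construction via Fefferman's method are essentially routine.
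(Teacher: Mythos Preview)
Your upper-bound sketch is essentially the route taken in \cite{BH}, which the paper simply cites; nothing is wrong there, though the intrinsic-function reduction you outline is not actually needed once one has the sparse domination of \cite{BH} with explicit $\alpha^{mn}$.

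The sharpness argument, however, has a genuine gap. You propose to test against $\vec w\equiv 1$ and a fixed $\vec f$, claiming $\|S_\alpha(\vec f)\|_{L^p}\gtrsim\alpha^{mn}$. This cannot hold. Already in the linear case $m=1$, Fubini gives the exact identity $\|S_\alpha f\|_{L^2}^2=\alpha^{n}\|S_1 f\|_{L^2}^2$, so for $p=2$ the unweighted operator norm scales like $\alpha^{n/2}$, not $\alpha^{n}$; in the $m$-linear case with $p=2$ the same computation yields $\alpha^{n/2}$, far from $\alpha^{mn}$. The phrase ``sharp in $\alpha$ on all class $A_{\vec p}$'' means the power cannot be lowered \emph{uniformly over the whole weight class}, not that it is saturated by the single weight $\vec w\equiv 1$. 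A direct lower bound would therefore have to produce, for each $r<mn$, a sequence of weights $\vec w_\alpha\in A_{\vec p}$ and functions $\vec f_\alpha$ witnessing failure of the $\alpha^r$ estimate---a much harder task that your proposal does not address. A secondary issue is that you also choose $\psi$ of tensor form, whereas in the paper $\psi$ is an arbitrary fixed kernel satisfying the size/smoothness hypotheses; sharpness is asserted for that given $\psi$.

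The paper avoids both difficulties by an indirect argument in the spirit of Lerner \cite{Ler14}: if some $r_0<mn$ worked, then summing the pointwise bound $g^*_\lambda\le S_1+\sum_k 2^{-k\lambda n/2}S_{2^{k+1}}$ would make $g^*_\lambda$ bounded on $L^{q_1}(w_1)\times\cdots\times L^{q_m}(w_m)$ for every $\lambda>2r_0/n$, hence (by multilinear extrapolation) on all unweighted $L^{p_1}\times\cdots\times L^{p_m}$. But Lemmas~\ref{lem:cont}--\ref{unbounded} show, for the \emph{given} nonzero $\psi$, that $g^*_\lambda$ fails to be bounded for $\lambda<2m$ and $p$ close to $1/m$, yielding the contradiction.
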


In order to present two-weight inequalities for square functions, we give the definition of bump conditions. Given Young functions $A$ and $\vec{B}=(B_1,\ldots,B_m)$, we denote
\begin{equation*}
||(u, \vec{v})||_{A,\, \vec{B},\, \vec{p}} :=
\begin{cases}
\sup\limits_{Q} \|u^{\frac1p}\|_{p, Q} \prod_{j=1}^m \|v_j^{-\frac{1}{p_j}}\|_{B_j, Q}, & \text{if } 1<p \le 2,
\\
\sup\limits_{Q} \|u^{\frac2p}\|_{A, Q}^{\frac12} \prod_{j=1}^m \|v_j^{-\frac{1}{p_j}}\|_{B_j, Q}, & \text{if } 2<p<\infty.
\end{cases}
\end{equation*}

\begin{theorem}\label{thm:bump}
Let $\alpha \ge 1$, $\lambda>2m$, and $\frac1p=\frac{1}{p_1}+\cdots +\frac{1}{p_m}$ with $1<p_1,\dots,p_m<\infty$. If the pair $(u, \vec{v})$ satisfies $||(u, \vec{v})||_{A,\, \vec{B},\, \vec{p}}<\infty$
with $\bar{A} \in B_{(p/2)'}\ (2<p<\infty)$ and $\bar{B_j} \in B_{p_j}$, then
\begin{align}
\label{eq:SLp} \|S_{\alpha}(\vec{f})\|_{L^p(u)}
&\lesssim \alpha^{mn} \mathscr{N}_{\vec{p}} \prod_{j=1}^m \|f_j\|_{L^{p_j}(v_j)},
\\
\label{eq:gLp} \|g_{\lambda}^{*}(\vec{f})\|_{L^p(u)}
&\lesssim \frac{\mathscr{N}_{\vec{p}}}{2^{n(\lambda-2m)}-1} \prod_{j=1}^m \|f_j\|_{L^{p_j}(v_j)},
\end{align}
where
\begin{equation*}
\mathscr{N}_{\vec{p}}  :=
\begin{cases}
||(u, \vec{v})||_{A,\, \vec{B},\, \vec{p}}\prod_{j=1}^m [\bar{B_j}]_{(B_j)_{p_j}}^{\frac{1}{{p_j}}}, & \text{if } 0<p \le 2,
\\
||(u, \vec{v})||_{A,\, \vec{B},\, \vec{p}}[\bar{A}]_{B_{(p/2)'}}^{\frac12-\frac1p} \prod_{j=1}^m [\bar{B_j}]_{(B_j)_{p_j}}^{\frac{1}{{p_j}}}, & \text{if } 2<p<\infty.
\end{cases}
\end{equation*}
\end{theorem}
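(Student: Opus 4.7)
The overall plan is a two-step reduction followed by a sparse two-weight bump estimate. I would first dispose of \eqref{eq:gLp} by reducing it to \eqref{eq:SLp}. Using the annular decomposition
\[
\R^{n+1}_+ = \Gamma_1(x) \cup \bigcup_{k \geq 0}\bigl(\Gamma_{2^{k+1}}(x)\setminus \Gamma_{2^k}(x)\bigr),
\]
and the fact that $(t/(t+|x-y|))^{n\lambda} \sim 2^{-kn\lambda}$ on the $k$-th shell, one obtains
\[
g^*_\lambda(\vec f)(x)^2 \lesssim \sum_{k \geq 0} 2^{-kn\lambda}\, S_{2^{k+1}}(\vec f)(x)^2.
\]
Taking $L^p(u)$ norms (by Minkowski in $L^{p/2}(u)$ when $p\ge 2$ and by $\|\cdot\|^{p/2}$-subadditivity when $0<p<2$) and inserting \eqref{eq:SLp} collapses everything to a geometric series of ratio $2^{-n(\lambda-2m)}$, whose value is comparable to $(2^{n(\lambda-2m)}-1)^{-1}$. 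This produces the advertised denominator and reduces the whole theorem to \eqref{eq:SLp}.

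For \eqref{eq:SLp}, the next step is to dominate $S_\alpha$ by a sparse multilinear square-type operator. By a multilinear adaptation of Lerner's sparse domination argument for square functions (exploiting the multilinear size and smoothness of $\psi$), I would establish that, for a finite family of shifted dyadic grids $\mathcal{D}$, there exist sparse families $\mathcal{S}\subset \mathcal{D}$ with
\[
S_\alpha(\vec f)(x) \lesssim \alpha^{mn}\, \mathcal{A}_\mathcal{S}(\vec f)(x), \qquad \mathcal{A}_\mathcal{S}(\vec f) := \biggl(\sum_{Q \in \mathcal{S}} \Bigl(\prod_{j=1}^m \langle |f_j|\rangle_Q \Bigr)^{\!2} \mathbf{1}_Q\biggr)^{\!1/2}.
\]
The $\alpha^{mn}$ factor arises by covering one wide cone of aperture $\alpha$ by translated unit cones over the shifted grids. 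It then suffices to prove the two-weight bump estimate $\|\mathcal{A}_\mathcal{S}(\vec f)\|_{L^p(u)} \lesssim \mathscr{N}_{\vec p} \prod_{j=1}^m \|f_j\|_{L^{p_j}(v_j)}$.

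This final step splits at $p=2$. For $0 < p \leq 2$, I would combine $\|\cdot\|_{L^{p/2}(u)}^{\min(1,p/2)}$-subadditivity with the sparse property to reduce to a single Carleson-type embedding, then apply generalized H\"older in Orlicz spaces,
\[
\langle |f_j|\rangle_Q \leq 2\, \|f_j v_j^{1/p_j}\|_{\bar{B_j}, Q} \, \|v_j^{-1/p_j}\|_{B_j, Q},
\]
to factor out the bump seminorm $\|(u,\vec v)\|_{A,\vec B,\vec p}$; the remaining sum is controlled by the $B_{p_j}$-boundedness of the Orlicz maximal operator $M_{\bar{B_j}}$ on $L^{p_j}$ (P\'erez's theorem), contributing $\prod_j [\bar{B_j}]_{(B_j)_{p_j}}^{1/p_j}$. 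For $2 < p < \infty$, I would dualize $L^{p/2}(u)$,
\[
\|\mathcal{A}_\mathcal{S}(\vec f)\|_{L^p(u)}^2 = \sup_{\|h\|_{L^{(p/2)'}(u)}\leq 1}\ \sum_{Q \in \mathcal{S}} \Bigl(\prod_j \langle |f_j|\rangle_Q\Bigr)^{\!2} \int_Q h\, u,
\]
and apply Orlicz--H\"older in the pair $(A,\bar{A})$ to split $(h,u)$ and in each $(B_j,\bar{B_j})$ to split $(f_j,v_j^{-1/p_j})$; the $B_{(p/2)'}$ and $B_{p_j}$ conditions then close the estimate with the claimed constants.

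The main technical obstacle is the $p>2$ case: producing the sharp exponent $\tfrac12 - \tfrac1p$ on $[\bar{A}]_{B_{(p/2)'}}$ cannot be achieved by a single H\"older step. One has to interpolate between the direct sparse--Orlicz bound and a trivial sparse bound in which $A$ appears only through $\|u^{2/p}\|_{A,Q}^{1/2}$, a balancing argument whose multilinear bookkeeping is delicate. A subsidiary difficulty is verifying that the sparse domination of $S_\alpha$ carries exactly the growth $\alpha^{mn}$ in the aperture, which requires exploiting the multilinear smoothness of $\psi$ carefully as the cone is dilated.
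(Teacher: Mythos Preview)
Your overall strategy---sparse domination followed by a two-weight bump estimate for the sparse square operator $\mathcal{A}_{\mathcal S}$, splitting at $p=2$---is exactly what the paper does. The sparse domination with the factor $\alpha^{mn}$ and the $g_\lambda^*$ constant are quoted from \cite{BH}, so neither of your ``difficulties'' is actually at stake here. Two remarks.

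First, for \eqref{eq:gLp} the paper does not reduce to \eqref{eq:SLp} via the annular decomposition; it applies the sparse domination for $g_\lambda^*$ directly, which already carries the constant $(2^{n(\lambda-2m)}-1)^{-1}$. Your reduction is legitimate, but the geometric series you obtain after Minkowski or subadditivity has ratio depending on $p$ and does not yield precisely that constant. Since the inequality is only stated up to $\lesssim$, this is harmless, but you should be aware the paper's route is cleaner.

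Second, and more importantly, your ``main technical obstacle'' is a misconception. The exponent $\tfrac12-\tfrac1p$ on $[\bar A]_{B_{(p/2)'}}$ \emph{is} produced by the single dualize-then-H\"older step, no interpolation needed. After dualizing $\|\mathcal A_{\mathcal S}(\vec f)\|_{L^p(u)}^2$ against $h\in L^{(p/2)'}(u)$, one applies the Orlicz H\"older inequality in the form
\[
\langle hu\rangle_Q \le 2\,\|hu^{1-2/p}\|_{\bar A,Q}\,\|u^{2/p}\|_{A,Q},
\]
and then P\'erez's maximal bound gives $\|M_{\bar A}(hu^{1-2/p})\|_{L^{(p/2)'}}\lesssim [\bar A]_{B_{(p/2)'}}^{1/(p/2)'}\|h\|_{L^{(p/2)'}(u)}$. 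Since the whole estimate is for the \emph{square} of the $L^p(u)$ norm, taking the square root yields the exponent $\tfrac{1}{2}\cdot\tfrac{1}{(p/2)'}=\tfrac12-\tfrac1p$. So the $p>2$ case is in fact no harder than the $p\le 2$ case; drop the interpolation plan.
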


For arbitrary weights, we have the following Fefferman-Stein inequalities.
\begin{theorem}\label{thm:FS}
Let $\alpha\geq 1$ and $\lambda>2m$. Then for all exponents $\frac1p=\frac{1}{p_1}+\cdots+\frac{1}{p_m}$ with $0<p\leq 2$ and $1<p_1,\ldots, p_m<\infty$, and for all weights $\vec{w}=(w_1, \ldots, w_m)$,
\begin{align}
\label{eq:S-FS} \|S_{\alpha}(\vec{f})\|_{L^p(\nu_{\vec{w}})}
&\lesssim \alpha^{mn}\prod_{i=1}^m \|f_i\|_{L^{p_i}(Mw_i)},
\\
\label{eq:g-FS} \|g_{\lambda}^{*}(\vec{f})\|_{L^p(\nu_{\vec{w}})}
&\lesssim \frac{1}{2^{n(\lambda-2m)}-1} \prod_{i=1}^m \|f_i\|_{L^{p_i}(Mw_i)},
\end{align}
where $\nu_{\vec{w}}=\prod_{i=1}^m w_i^{p/p_i}$.
\end{theorem}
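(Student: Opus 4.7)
The plan is to reduce both inequalities to pointwise sparse-type bounds for $S_\alpha$ and $g^*_\lambda$ supplied by the preceding sections of the paper, and then to feed them into a standard Fefferman--Stein scheme that exploits the hypothesis $0<p\le 2$.

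First, I would invoke the pointwise sparse domination for $S_\alpha$ in its quadratic form
\[
S_\alpha(\vec f)(x)^{2}\le C\alpha^{2mn}\sum_{Q\in\mathcal S}\Bigl(\prod_{i=1}^m\langle|f_i|\rangle_Q\Bigr)^{2}\chi_Q(x),
\]
for some sparse collection $\mathcal S=\mathcal S(\vec f)$. The restriction $p\le 2$ enters here: raising both sides to the power $p/2\le 1$ and invoking the elementary sub-additivity $(a+b)^{p/2}\le a^{p/2}+b^{p/2}$, integration against $\nu_{\vec w}$ gives
\[
\|S_\alpha(\vec f)\|_{L^p(\nu_{\vec w})}^{\,p}\le C\alpha^{mnp}\sum_{Q\in\mathcal S}\Bigl(\prod_{i=1}^m\langle|f_i|\rangle_Q\Bigr)^{p}\nu_{\vec w}(Q).
\]

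Second, I would take care of the weights with two elementary ingredients. Sparseness of $\mathcal S$ yields, for each $Q\in\mathcal S$, a measurable $E_Q\subset Q$ with $|E_Q|\ge\tau|Q|$ and the $E_Q$'s pairwise disjoint, which in turn gives $\nu_{\vec w}(Q)\lesssim\int_{E_Q}M\nu_{\vec w}(y)\,dy$ and $\langle|f_i|\rangle_Q\le Mf_i(y)$ for every $y\in E_Q$. H\"older's inequality applied to $\nu_{\vec w}=\prod_i w_i^{p/p_i}$ with exponents $p_i/p$ (whose reciprocals sum to $1$ because $\sum_i 1/p_i=1/p$) produces the pointwise bound $M\nu_{\vec w}(y)\le\prod_{i=1}^m Mw_i(y)^{p/p_i}$. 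Plugging both facts into the previous display and using the disjointness of the $E_Q$'s collapses the sum into a single integral over $\Rn$,
\[
\sum_{Q\in\mathcal S}\Bigl(\prod_i\langle|f_i|\rangle_Q\Bigr)^{p}\nu_{\vec w}(Q)\lesssim\int_{\Rn}\prod_{i=1}^m Mf_i(y)^{p}Mw_i(y)^{p/p_i}\,dy.
\]

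Third, a final application of H\"older with exponents $p_i/p$ factorises the integral as $\prod_i\|Mf_i\|_{L^{p_i}(Mw_i)}^{\,p}$, and the classical Fefferman--Stein inequality $\|Mf_i\|_{L^{p_i}(Mw_i)}\lesssim\|f_i\|_{L^{p_i}(Mw_i)}$, valid since every $p_i>1$, delivers \eqref{eq:S-FS} after taking $p$-th roots. For the $g^*_\lambda$ estimate, an analogous sparse bound whose prefactor is precisely $(2^{n(\lambda-2m)}-1)^{-1}$ is available; this prefactor arises from the dyadic cone decomposition $\mathbb R^{n+1}_+=\bigcup_{k\ge 0}\bigl(\Gamma_{2^k}(x)\setminus\Gamma_{2^{k-1}}(x)\bigr)$ together with the pointwise decay $(t/(t+|x-y|))^{n\lambda}\lesssim 2^{-kn\lambda}$ on the $k$-th annulus, so that summing the $S_{2^k}$-contributions produces a geometric series. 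Once this estimate is in hand, the remaining steps are identical to those for $S_\alpha$ and yield \eqref{eq:g-FS}.

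The main technical obstacle is the bookkeeping of the two successive H\"older applications, to make sure the exponents close up exactly on $\prod_i\|f_i\|_{L^{p_i}(Mw_i)}$ without any spurious factor. The hypothesis $p\le 2$ is essential: it is precisely what permits the $p/2$-power to be brought inside the sparse sum via sub-additivity. Without this restriction the same route would force a duality-based argument, which does not interact well with the arbitrary-weight hypothesis.
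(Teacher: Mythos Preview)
Your overall plan is close to the paper's, and the first two steps---sparse domination plus the $p/2\le 1$ subadditivity to arrive at
\[
\|S_\alpha(\vec f)\|_{L^p(\nu_{\vec w})}^{\,p}\lesssim\alpha^{mnp}\sum_{Q\in\S}\Bigl(\prod_{i=1}^m\langle|f_i|\rangle_Q\Bigr)^{p}\nu_{\vec w}(Q)
\]
---are fine and match the paper. The problem is your final step. You end with the inequality $\|Mf_i\|_{L^{p_i}(Mw_i)}\lesssim\|f_i\|_{L^{p_i}(Mw_i)}$ and call it ``the classical Fefferman--Stein inequality''. It is not: Fefferman--Stein says $\|Mf\|_{L^{p}(w)}\lesssim\|f\|_{L^{p}(Mw)}$ for arbitrary $w$, so applying it with $w=Mw_i$ would give $M(Mw_i)$ on the right, not $Mw_i$. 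What you are actually asserting is that $M$ is bounded on $L^{p_i}(Mw_i)$, i.e.\ that $Mw_i\in A_{p_i}$. For a general weight $w_i$ this fails; only $(Mw_i)^\delta\in A_1$ for $\delta<1$ is guaranteed (Coifman--Rochberg), and $\delta=1$ is the borderline case that breaks. So the argument does not close.

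The paper's route avoids this trap by inserting the weight \emph{before} passing to a maximal function. Writing $v_i:=Mw_i$, one applies a generalised H\"older inequality $\langle|f_i|\rangle_Q\le\|f_i v_i^{1/p_i}\|_{\bar A_i,Q}\,\|v_i^{-1/p_i}\|_{A_i,Q}$ with Young functions $A_i$ such that $\bar A_i\in B_{p_i}$ (e.g.\ $A_i(t)=t^{r_i'}$ with $1<r_i<p_i$). Since $v_i\ge\langle w_i\rangle_Q$ on $Q$, the second factor is at most $\langle w_i\rangle_Q^{-1/p_i}$; together with the H\"older bound $\langle\nu_{\vec w}\rangle_Q\le\prod_i\langle w_i\rangle_Q^{p/p_i}$ this cancels against $\nu_{\vec w}(Q)$, leaving only $|Q|\lesssim|E_Q|$. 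The remaining factor is dominated by $\inf_Q M_{\bar A_i}(f_i v_i^{1/p_i})$, and one finishes with the \emph{unweighted} boundedness $\|M_{\bar A_i}\|_{L^{p_i}\to L^{p_i}}<\infty$, which does hold. In short, the weight $Mw_i$ must be glued to $f_i$ first so that the last maximal-function estimate is unweighted; once you pass to $Mf_i$ with the weight still outside, the argument cannot be rescued for arbitrary $w_i$.
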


We are going to establish entropy bump estimates. See Section \ref{sec:entropy} for the entropy bump conditions $[\vec{\sigma},\nu]_{\frac{2}{\vec{p}'},\vec{p},\varepsilon,\frac{2}{p},m+1}$ and $[\vec{\sigma},\nu]_{\vec{p},2,\varepsilon}$.

\begin{theorem}\label{thm:entropy}
Let $\alpha\geq 1$, $\lambda>2m$, and let $\frac1p=\frac{1}{p_1}+\cdots+\frac{1}{p_m}$ with $1<p_1,\ldots, p_m<\infty$. Let $\nu$ and $\vec{\sigma}=(\sigma_1,\dots,\sigma_m)$ weights. Assume that $\varepsilon$ is a monotonic increasing function on $(1,\infty)$ satisfying $ \int_{1}^{\infty}\frac{dt}{\varepsilon(t)t}<\infty$. Then,
\begin{align}
\label{eq:S-entr} \|S_{\alpha}(\vec{f \sigma})\|_{L^p(\nu)}
&\lesssim \alpha^{mn} \mathscr{N}_{\vec{p},\varepsilon}\prod_{i=1}^{m}\|f\|_{L^{p_{i}}(\sigma_{i})},
\\
\label{eq:g-entr} \|g_{\lambda}^{*}(\vec{f\sigma})\|_{L^p(\nu)}
&\lesssim \frac{\mathscr{N}_{\vec{p},\varepsilon}}{2^{n(\lambda-2m)}-1} \prod_{i=1}^{m}\|f\|_{L^{p_{i}}(\sigma_{i})},
\end{align}
where
\begin{equation*}
\mathscr{N}_{\vec{p},\varepsilon}  :=
\begin{cases}
{}\lfloor \vec{\sigma},\nu \rfloor_{\frac{2}{\vec{p}'},\vec{p},\varepsilon,\frac{2}{p},m+1}^{\frac{1}{2}}, & \text{if } 0<p \le 2,
\\
{} \lfloor \vec{\sigma},\nu \rfloor_{\vec{p},2,\varepsilon}^{\frac{1}{p}}, & \text{if } 2<p<\infty.
\end{cases}
\end{equation*}
with $\vec{p}=(p_1,\dots,p_m,p')$ and $\frac{2}{\vec{p}'}=\big(\frac{2}{p'_1},\ldots,\frac{2}{p'_m},\frac2p\big)$.
\end{theorem}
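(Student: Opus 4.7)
The plan is to reduce both estimates to an appropriate multilinear sparse form and then apply a Carleson embedding tailored to entropy bumps. Since $g^*_\lambda$ admits the standard aperture decomposition
\[
g^*_\lambda(\vec f)(x) \lesssim \Big(\sum_{k=0}^\infty 2^{-kn(\lambda-2m)} S_{2^k}(\vec f)(x)^2\Big)^{1/2},
\]
once \eqref{eq:S-entr} is established with aperture $\alpha = 2^k$, summing in $k$ produces the geometric prefactor $(2^{n(\lambda-2m)}-1)^{-1}$ appearing in \eqref{eq:g-entr}. It therefore suffices to prove \eqref{eq:S-entr}.

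For $S_\alpha$, I would first invoke a pointwise (or bilinear-form) sparse domination of $S_\alpha$, of exactly the type already used in the proof of Theorem \ref{thm:S-sharp}: up to the loss $\alpha^{mn}$, the $L^p(\nu)$-norm of $S_\alpha(\vec{f\sigma})$ is controlled by the $L^p(\nu)$-norm of a sparse operator whose building block in each cube $Q$ is $\prod_{j=1}^m \langle |f_j|\sigma_j\rangle_Q$, combined in an $\ell^2$ fashion across the sparse family $\mathcal S$, the $\ell^2$ structure being forced by the square-function character of $S_\alpha$. The factor $\alpha^{mn}$ comes from the same step that produces it in Theorem~\ref{thm:S-sharp} and is therefore inherited at no extra cost.

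For $0<p\le 2$, I would pass to the $L^{p/2}(\nu)$ quasi-norm of the square of this sparse operator. In each cube $Q$ I would multiply and divide by $\langle \sigma_j\rangle_Q^{1/p_j}$ and $\langle \nu\rangle_Q^{1/p'}$ to separate, on one side, the entropy bump characteristic $\lfloor \vec\sigma,\nu\rfloor_{\frac{2}{\vec p'},\vec p,\varepsilon,\frac{2}{p},m+1}$ and, on the other side, multilinear Carleson functionals acting on $f_j^{p_j}\sigma_j$ and on $\nu$. The hypothesis $\int_1^\infty \frac{dt}{t\varepsilon(t)}<\infty$ is precisely the threshold that makes the entropy Carleson embedding converge, delivering the exponent $1/2$ that matches $\mathscr N_{\vec p,\varepsilon}$. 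For $2<p<\infty$ the same scheme is dualized by testing the sparse form against $g\in L^{(p/2)'}(\nu)$; the relevant characteristic becomes $\lfloor\vec\sigma,\nu\rfloor_{\vec p,2,\varepsilon}$, and the exponent $1/p$ in $\mathscr N_{\vec p,\varepsilon}$ reflects the H\"older balancing across the $m+1$ slots after duality.

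The main obstacle is the $(m+1)$-linear entropy Carleson embedding itself: one needs a multilinear adaptation of the Treil--Volberg entropy embedding in which $m$ of the slots carry $\sigma_j$-weighted averages of $|f_j|^{p_j}$ while the last slot carries an appropriate $\nu$-quantity (a direct average for $p\le 2$, the dual testing function for $p>2$). Once this embedding is in place, \eqref{eq:S-entr} follows by a direct bookkeeping of exponents, and the summation in $k$ described above yields \eqref{eq:g-entr}.
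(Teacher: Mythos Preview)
Your proposal is correct and follows essentially the same route as the paper: reduce to the sparse square operator $\A_{\S}^2$ via the pointwise domination already recorded in \eqref{eq:S-sparse}--\eqref{eq:g-sparse}, and then split into the regimes $p\le 2$ and $p>2$. Two minor remarks. First, the paper treats $g^*_\lambda$ through its own sparse bound \eqref{eq:g-sparse}, which carries the constant $(2^{n(\lambda-2m)}-1)^{-1}$ directly, rather than through your aperture sum; in any case your displayed pointwise inequality should have $2^{-kn\lambda}$ inside the $\ell^2$ sum, the exponent $-kn(\lambda-2m)$ appearing only after one applies the norm bound for $S_{2^k}$ with its $2^{kmn}$ prefactor. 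Second, for $p\le 2$ the ``entropy Carleson embedding'' you point to is not a simple multiply-and-divide: the paper supplies it via the Zorin-Kranich scheme (the Cascante--Ortega--Verbitsky lemma together with stopping-time families indexed by level sets of the entropy ratio), and this is where the genuine technical content of that case lies. For $p>2$ your duality-plus-Carleson outline matches the paper's Lacey--Spencer style argument closely.
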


Next, we turn to the weak type estimates for Littlewood-Paley operators.
\begin{theorem}\label{thm:mixed}
Let $\alpha\geq 1$ and $\lambda>2m$. Let $\vec{w}=(w_1,\ldots,w_m)$ and $u=\prod_{i=1}^m w_i^{1/m}$. If $\vec{w}$ and $v$ satisfy
\begin{align*}
(1) \quad \vec{w} \in A_{\vec{1}} \text{ and } u v^{1/m} \in A_{\infty},\ \
\text{ or }\quad  (2)\quad w_1,\ldots,w_m \in A_1 \text{ and } v \in A_{\infty},
\end{align*}
then we have
\begin{align}
\label{eq:Smixed} \bigg\|\frac{S_{\alpha}(\vec{f})}{v}\bigg\|_{L^{1/m,\infty}(u v^{1/m})}
&\lesssim \prod_{i=1}^m \| f_i \|_{L^1(w_i)},
\\
\label{eq:gmixed} \bigg\|\frac{g_{\lambda}^{*}(\vec{f})}{v}\bigg\|_{L^{1/m,\infty}(u v^{1/m})} &\lesssim \prod_{i=1}^m \| f_i \|_{L^1(w_i)}.
\end{align}
In particular, both $S_{\alpha}$ and $g_{\lambda}^{*}$ are bounded from $L^1(w_1) \times \cdots \times L^1(w_m)$ to $L^{1/m, \infty}(\nu_{\vec{w}})$ for every $\vec{w} \in A_{\vec{1}}$.
\end{theorem}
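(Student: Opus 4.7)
The idea is to reduce the mixed weak-type inequality for $S_\alpha$ to the analogous estimate for the multilinear maximal operator $\mathcal{M}(\vec f)$, which was proved under both hypotheses (1) and (2) by Li--Ombrosi--P\'erez \cite{LOPi}, and then to pass from $S_\alpha$ to $g^*_\lambda$ by a standard aperture decomposition. The transfer from $\mathcal{M}$ to $S_\alpha$ would be carried out through a Coifman--Fefferman inequality with sharp $A_\infty$ dependence, which (as announced in the abstract) is developed earlier in the paper as one of the key auxiliary results.

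I would begin with $S_\alpha$. Under hypothesis (2) the product $u = \prod_i w_i^{1/m}$ already lies in $A_1 \subset A_\infty$, hence $uv^{1/m} \in A_\infty$; under hypothesis (1) the condition $uv^{1/m} \in A_\infty$ is assumed directly. Applying the sharp Coifman--Fefferman inequality
\[
\int_{\mathbb{R}^n} S_\alpha(\vec f)^q\, w\, dx \le C\, \alpha^{mnq}\,\phi([w]_{A_\infty}) \int_{\mathbb{R}^n} \mathcal{M}(\vec f)^q\, w\, dx, \qquad w \in A_\infty,
\]
with $w = uv^{1/m}$ and a suitable exponent $q$, together with a truncation/good-$\lambda$ argument that compares the level sets $\{S_\alpha(\vec f)/v > t\}$ and $\{\mathcal{M}(\vec f)/v > \kappa t\}$, one transfers the weak bound of \cite{LOPi} from $\mathcal{M}(\vec f)/v$ to $S_\alpha(\vec f)/v$. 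The precise $A_\infty$ dependence is essential so that $\phi([uv^{1/m}]_{A_\infty})$ stays under quantitative control in both regimes (1) and (2).

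To pass from $S_\alpha$ to $g^*_\lambda$ I would use the standard annular decomposition $\mathbb{R}^{n+1}_+ = \bigsqcup_{k\ge 0}\bigl(\Gamma_{2^k}(x) \setminus \Gamma_{2^{k-1}}(x)\bigr)$ (with $\Gamma_{2^{-1}} := \emptyset$), on which the factor $(t/(t+|x-y|))^{n\lambda}$ is comparable to $2^{-kn\lambda}$, yielding the pointwise control
\[
g^*_\lambda(\vec f)(x) \lesssim \sum_{k\ge 0} 2^{-kn\lambda/2}\, S_{2^k}(\vec f)(x).
\]
Since $1/m \le 1$, the functional $\|\cdot\|_{L^{1/m,\infty}(uv^{1/m})}^{1/m}$ is subadditive, so summing the weak bounds for $S_{2^k}$ (each of aperture order $2^{kmn}$) produces a geometric series in $2^{-kn(\lambda-2m)/2}$; since $\lambda > 2m$ the series converges and generates the advertised factor $(2^{n(\lambda-2m)}-1)^{-1}$. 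The ``in particular'' assertion then follows by taking $v \equiv 1$ in hypothesis (1), since $\vec w \in A_{\vec 1}$ forces $u \in A_1 \subset A_\infty$.

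The hard part will be making the reduction from the $L^q$ Coifman--Fefferman inequality to the weak-$L^{1/m}$ mixed estimate fully rigorous: at the endpoint $p = 1/m$ one cannot interpolate freely, and one has to argue as in \cite{LOP,LOPi} via a multilinear Calder\'on--Zygmund decomposition of $\vec f$ at the appropriate level, split into good and bad parts, deploy the $L^2$ boundedness of $S_\alpha$ from \cite{SXY} on the good part, and carefully track the aperture $\alpha$ and the $A_\infty$ constant through the bad-part estimate. A secondary but delicate point is checking that the $2^{kmn}$ aperture constants and the $2^{-kn\lambda/2}$ decay balance correctly before the $1/m$-quasi-triangle inequality is applied; this is routine but must be done with care so that the series still converges after the weak-type norm is brought inside.
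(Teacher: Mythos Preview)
Your overall strategy is the right one: establish a Coifman--Fefferman inequality comparing $S_\alpha$ to $\mathcal{M}$ in $L^p(w)$ for $w\in A_\infty$, reduce the mixed weak-type bound for $S_\alpha$ to the one for $\mathcal{M}$ already known from \cite{LOPi}, and pass to $g^*_\lambda$ by the aperture decomposition (your treatment of that last step, including the $1/m$-subadditivity and the geometric series in $2^{-kn(\lambda-2m)/2}$, is correct). However, the mechanism you propose for the transfer---from the strong $L^q$ Coifman--Fefferman inequality to the mixed weak $L^{1/m,\infty}$ estimate---has a genuine gap.

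Applying the Coifman--Fefferman inequality with $w=uv^{1/m}$ only yields $\|S_\alpha(\vec f)\|_{L^q(uv^{1/m})}\lesssim\|\mathcal{M}(\vec f)\|_{L^q(uv^{1/m})}$; it does \emph{not} give a comparison of the level sets of $S_\alpha(\vec f)/v$ and $\mathcal{M}(\vec f)/v$, which is what a good-$\lambda$ argument would require. The division by $v$ is precisely the obstruction, and a multilinear Calder\'on--Zygmund decomposition of $\vec f$ does not fix this directly (nor is that how \cite{LOPi} passes from $\mathcal{M}$ to general operators---they also use extrapolation). What the paper actually does, following \cite{CMP} and \cite{LOPi}, is a Lorentz-duality/Rubio de Francia argument: one writes
\[
\Big\|\tfrac{S_\alpha(\vec f)}{v}\Big\|_{L^{1/m,\infty}(uv^{1/m})}^{1/(mr)}
=\Big\|\Big(\tfrac{S_\alpha(\vec f)}{v}\Big)^{1/(mr)}\Big\|_{L^{r,\infty}(uv^{1/m})}
=\sup_{\|h\|_{L^{r',1}(uv^{1/m})}=1}\int S_\alpha(\vec f)^{1/(mr)}\,h\,u\,v^{1/(mr')}\,dx,
\]
then replaces $h$ by $\mathcal{R}h=\sum_{j\ge 0}(2K_0)^{-j}T_u^j h$ with $T_u f=M(fu)/u$, so that $\mathcal{R}h\cdot u\in A_1$. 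The technical heart is then to verify, via $A_p$ factorization and the structure of hypotheses (1) or (2), that for a suitably chosen $r>1$ one has $\mathcal{R}h\cdot u\cdot v^{1/(mr')}\in A_\infty$ and $\|\mathcal{R}h\|_{L^{r',1}(uv^{1/m})}\le 2\|h\|_{L^{r',1}(uv^{1/m})}$. Only then can the Coifman--Fefferman inequality be applied, with the weight $\mathcal{R}h\cdot u\cdot v^{1/(mr')}$ (\emph{not} $uv^{1/m}$), after which H\"older's inequality in Lorentz spaces brings one back to $\|\mathcal{M}(\vec f)/v\|_{L^{1/m,\infty}(uv^{1/m})}$ and \cite{LOPi} finishes the job. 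This auxiliary $A_\infty$ weight built from the dual function is the key idea your proposal is missing.
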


\begin{theorem}\label{thm:local}
Let $\alpha\geq 1$ and $\lambda>2m$. Let $Q$ be a cube and every function $f_j \in L^{\infty}_c(\Rn)$ with $\supp(f_j) \subset Q$, $j=1,\ldots,m$. Then there exist constants $c_1>0$ and $c_2>0$ such that
\begin{align}
\label{eq:local-1} \big| \big\{x \in Q:  S_{\alpha}(\vec{f})(x) > t \M(\vec{f})(x)  \big\}\big|
&\leq c_1 e^{- c_2 \beta_1 t^2}  |Q|,
\\
\label{eq:local-2} \big| \big\{x \in Q:  g_{\lambda}^{*}(\vec{f})(x) > t \M(\vec{f})(x)  \big\}\big|
&\leq c_1 e^{- c_2 \beta_2 t^2}  |Q|,
\end{align}
for all $t>0$, where $\beta_1=\alpha^{-2mn}$ and $\beta_2=(1-2^{-n(\lambda-2m)/2})^2$.
\end{theorem}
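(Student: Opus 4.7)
The plan is to adapt to the multilinear setting the Karagulyan good-$\lambda$ scheme, as employed in \cite{OPR} for linear square functions. I treat \eqref{eq:local-1} first, and then deduce \eqref{eq:local-2} from it by an annular decomposition of $g_{\lambda}^{*}$.

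For \eqref{eq:local-1}, I fix the cube $Q$ and the tuple $\vec{f}$ supported in $Q$, write $M := \M(\vec{f})$, and set $E_{t} := \{x \in Q : S_{\alpha}(\vec{f})(x) > t\}$. The strategy is to establish a one-step good-$\lambda$ inequality of the form
\begin{equation}\label{eq:plan-gl}
\bigl|\{x \in E_{2t} : M(x) \le \gamma t\}\bigr| \le c_{1}\, e^{-c_{2}\,\alpha^{-2mn}\gamma^{-2}}\, |E_{t}|,
\end{equation}
uniformly in $t > 0$ and $0 < \gamma < \gamma_{0}$, and then iterate \eqref{eq:plan-gl} along the dyadic levels $t = 2^{k} t_{0}$ to arrive at \eqref{eq:local-1}. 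To prove \eqref{eq:plan-gl} I Whitney-decompose $E_{t}$ into cubes $\{Q_{j}\}$ with $\ell(Q_{j}) \simeq \dist(Q_{j}, E_{t}^{c})$ and localize on each $Q_{j}$ separately. Splitting $f_{i} = f_{i}\chi_{cQ_{j}} + f_{i}\chi_{(cQ_{j})^{c}}$ for a suitable dilation constant $c = c(\alpha)$, the $2^{m} - 1$ mixed pieces are estimated pointwise on $Q_{j}$ using the size and smoothness estimates for $\psi_{t}$ together with the fact that $S_{\alpha}(\vec{f}) \le t$ at some reference point of $cQ_{j}$, which yields an $O(t)$ contribution. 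For the remaining all-local piece $S_{\alpha}(\vec{f}\chi_{cQ_{j}})$ I invoke the unweighted $L^{2}$-$L^{2/m}$ bound
\begin{equation}\label{eq:plan-l2}
\|S_{\alpha}(\vec{g})\|_{L^{2/m}(\Rn)} \lesssim \alpha^{mn}\prod_{i=1}^{m} \|g_{i}\|_{L^{2}(\Rn)},
\end{equation}
which is Theorem \ref{thm:S-sharp} with the Lebesgue weight and $p_{i} = 2$. A Chebyshev step on $Q_{j}$ then yields $|\{x \in Q_{j} : S_{\alpha}(\vec{f}\chi_{cQ_{j}}) > t\}| \lesssim \alpha^{2mn}\gamma^{2/m}|Q_{j}|$; iterating this weak bound over geometric scales $t_{k} = 2^{k} t$ in the standard Karagulyan fashion upgrades it to the Gaussian inequality \eqref{eq:plan-gl}.

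For \eqref{eq:local-2} I use the elementary pointwise comparison
\begin{equation*}
g_{\lambda}^{*}(\vec{f})(x) \le C \sum_{k=0}^{\infty} 2^{-kn\lambda/2}\, S_{2^{k}}(\vec{f})(x),
\end{equation*}
obtained by partitioning $\mathbb{R}^{n+1}_{+}$ into the annular cones $\Gamma_{2^{k}}(x) \setminus \Gamma_{2^{k-1}}(x)$ and using $(\sum a_{k}^{2})^{1/2} \le \sum a_{k}$. If $g_{\lambda}^{*}(\vec{f})(x) > t M(x)$, then distributing $t$ across levels via weights $b_{k} = (1-r)r^{k}$ with $r$ slightly above $2^{-n(\lambda - 2m)/2}$, some $k$ must satisfy $S_{2^{k}}(\vec{f})(x) > b_{k}\, C^{-1/2}\, 2^{kn\lambda/2}\, t\, M(x) =: s_{k} M(x)$. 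Applying \eqref{eq:local-1} with aperture $\alpha = 2^{k}$ to each summand yields exponents of the form $c_{2} 2^{-2kmn} s_{k}^{2} = c\, (1-r)^{2} \bigl(r^{2}\, 2^{n(\lambda - 2m)}\bigr)^{k}\, t^{2}$, geometrically growing in $k$, so that the sum over $k$ is dominated by its $k=0$ term. Letting $r \downarrow 2^{-n(\lambda - 2m)/2}$ and absorbing the $\varepsilon$-loss into the constant $c_{2}$ produces \eqref{eq:local-2} with $\beta_{2} = (1 - 2^{-n(\lambda - 2m)/2})^{2}$.

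The principal obstacle I anticipate is the careful execution of the localization on each Whitney cube $Q_{j}$ in the multilinear setting: the splitting of each $f_{i}$ into near/far parts produces $2^{m}$ summands, and showing that all $2^{m} - 1$ mixed terms contribute $\lesssim t$ pointwise on $Q_{j}$ without damaging the $\alpha^{-2mn}$ dependence requires a delicate use of the cone geometry $\Gamma_{\alpha}$ together with the decay of $\psi_{t}$ in the off-diagonal variables. A secondary but technical issue is the choice of weights $b_{k}$ in the $g_{\lambda}^{*}$ step: taking $r$ exactly at the critical value $2^{-n(\lambda - 2m)/2}$ renders all exponents equal and the series divergent, so one must perturb $r$ upward and show that the resulting loss is absorbed into the implicit constant $c_{2}$.
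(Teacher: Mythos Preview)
Your proposal has a genuine gap at the Chebyshev step on each Whitney cube. You invoke the strong bound $\|S_{\alpha}(\vec g)\|_{L^{2/m}}\lesssim\alpha^{mn}\prod_i\|g_i\|_{L^2}$ with $g_i=f_i\chi_{cQ_j}$ and then claim the resulting level-set measure is $\lesssim\alpha^{2mn}\gamma^{2/m}|Q_j|$. For this to hold you would need $\prod_i\|f_i\|_{L^2(cQ_j)}$ to be controlled by $\gamma t\,|cQ_j|^{m/2}$, but the only information available on $Q_j$ is $\M(\vec f)(x_0)\le\gamma t$, which bounds the product of $L^1$ averages $\prod_i\langle|f_i|\rangle_{cQ_j}$, not the product of $L^2$ norms; for general $f_i\in L^\infty_c$ these are not comparable. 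If instead you use the endpoint bound $S_\alpha:L^1\times\dots\times L^1\to L^{1/m,\infty}$ (which \emph{is} compatible with $\M(\vec f)$), the good-$\lambda$ factor becomes $(C\alpha^{mn}\gamma)^{1/m}$ and Karagulyan iteration yields only $e^{-c\alpha^{-mn}t}$, not $e^{-c\alpha^{-2mn}t^2}$. Iterating a polynomial good-$\lambda$ inequality produces exponential decay in $t$, never sub-Gaussian decay; the square in the exponent is a structural feature of square functions that a good-$\lambda$ argument applied to $S_\alpha$ itself cannot detect.

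The paper obtains the $t^2$ by working with $S_\alpha(\vec f)^2$ throughout. Its main proof establishes a local Coifman--Fefferman inequality $\|S_\alpha(\vec f)\|_{L^2(Q,w)}\le c\,\alpha^{mn}[w]_{A_p}^{1/2}\|\M(\vec f)\|_{L^2(Q,w)}$ (the square-root dependence on $[w]_{A_p}$ is what ultimately produces the squared exponent), then builds the weight $w=\mathcal{R}h\cdot\M(\vec f)^{2(1-p)}$ via a Rubio de Francia iteration and optimizes the exponent $r$. The paper's alternative proof uses the sparse domination $S_\alpha(\vec f)^2\lesssim\alpha^{2mn}\sum_{Q\in\S}\prod_i\langle|f_i|\rangle_Q^2\,\mathbf 1_Q\le\alpha^{2mn}\M(\vec f)^2\sum_{Q\in\S}\mathbf 1_Q$ together with the known bound $|\{x\in Q_0:\sum_{Q\in\S}\mathbf 1_Q>s\}|\le ce^{-cs}|Q_0|$; setting $s=c\,\alpha^{-2mn}t^2$ gives the Gaussian exponent directly. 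Either of these routes is what you should follow. Your treatment of $g_\lambda^*$ via the annular decomposition is essentially the same as the paper's (both rest on \eqref{eq:gSS}) and is fine once \eqref{eq:local-1} is in hand.
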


\section{Preliminaries}

\subsection{Multiple weights}
The multilinear maximal operators $\mathcal{M}$ are defined by
$$\mathcal{M}(\vec{f})(x)=\sup_{Q\ni x} \prod_{j=1}^m  \fint_Q |f_j(y_j)|dy_j,$$
where the supremum is taken over all the cubes containing $x$. The corresponding theory of weights for this new maximal function  gives the right class of multiple weights for multilinear Calder\'{o}n-Zygmund operators.
\begin{definition}\label{def3}
Let $1\leq p_1,\ldots,p_m<\infty.$ Given a vector of weights $\vec{w}=(w_1,\cdots, w_m)$, we say that $\vec{w} \in A_{\vec{p}}$ if
$$[\vec{w}]_{A_{\vec{p}}} := \sup_Q \bigg(\fint_Q \vec{w}\, dx \bigg)^{\frac{1}{p}}\prod_{i=1}^m
\bigg(\fint_Q w_i^{1-p'_i} dx \bigg)^{\frac{1}{p'_i}}< \infty,$$
where $\frac1p=\frac{1}{p_1}+\cdots+\frac{1}{p_m}$ and $\nu_{\vec{w}}=\prod_{i=1}^m w_i^{p/p_i}$.
When $p_i=1,$ $\big(\fint_Q w_i^{1-p'_i} dx\big)^{1/p'_i}$ is understood as $(\inf_Q w_i)^{-1}.$
\end{definition}

The characterizations of multiple weights were given in \cite{LOPTT} and \cite{CWX}.

\begin{lemma}\label{lem:weight}
Let $\frac{1}{p} = \frac{1}{p_{1}} + \cdots + \frac{1}{p_{m}}$ with $1 \leq p_1, \ldots, p_m < \infty$,  and $p_0=\min\{p_i\}_i$. Then the following statements hold :
\begin{enumerate}
\item[$(1)$] $ A_{r_1 \vec{p}} \subsetneq A_{r_2 \vec{p}},
\ \ \text{for any} \ \ 1/p_0 \leq r_1 < r_2 < \infty.$
\item[$(2)$] $A_{\vec{p}}=\bigcup_{1/p_0 \leq r < 1} A_{r \vec{p}}$.
\item[$(3)$] $\vec{w} \in A_{\vec{p}}$ if and only if $\nu_{\vec{w}} \in A_{mp}$ and $w_i^{1-p_i'} \in A_{mp_i'}$, $i=1,\ldots,m$. Here, if $p_i=1$, $w_i^{1-p_i'} \in A_{mp_i'}$ is understood as $w_i^{1/m} \in A_{1}$.
\end{enumerate}
\end{lemma}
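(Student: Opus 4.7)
The plan is to prove (3) first as the central structural fact, then deduce (1) and (2) from it. The single technical engine is Hölder's inequality applied across products of powers of the $w_i$'s, combined with the classical reverse-Hölder self-improvement of the $A_q$ classes. For the forward direction of (3), set $\sigma_i := w_i^{1-p_i'}$. A direct computation from $\nu_{\vec{w}} = \prod_i w_i^{p/p_i}$ gives
\[
\nu_{\vec{w}}^{\,1-(mp)'} \;=\; \prod_{i=1}^m \sigma_i^{a_i}, \qquad a_i := \frac{p(p_i-1)}{p_i(mp-1)},
\]
and the algebraic identity $\sum_i a_i = 1$ is equivalent to the defining relation $\sum_i 1/p_i = 1/p$. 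Hölder's inequality with exponents $1/a_i$ then controls $\bigl(\fint_Q \nu_{\vec{w}}^{\,1-(mp)'}\bigr)^{mp-1}$ by $\prod_i \bigl(\fint_Q \sigma_i\bigr)^{p/p_i'}$; multiplying by $\fint_Q \nu_{\vec{w}}$ reproduces the $p$-th power of $[\vec{w}]_{A_{\vec{p}}}$. A symmetric argument, isolating $\sigma_i$ and expressing $\sigma_i^{\,1-(mp_i')'}$ as a product of $\nu_{\vec{w}}$ and the other $\sigma_j$'s with Hölder exponents summing to $1$, yields $\sigma_i \in A_{mp_i'}$.

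For the converse of (3) one reverses the chain: the $A_{mp}$ bound on $\nu_{\vec{w}}$ and the $A_{mp_i'}$ bounds on each $\sigma_i$ are multiplied together with carefully chosen exponents so that every $w_i$ appears with the correct total power, producing the $A_{\vec{p}}$ quantity up to a product of the individual classical constants. For part (1), the nesting $A_{r_1 \vec{p}}\subset A_{r_2 \vec{p}}$ for $r_1<r_2$ follows from Hölder applied inside each $\fint_Q$, mirroring the classical inclusion $A_{q_1}\subset A_{q_2}$; strictness is witnessed by a power weight $w_i(x)=|x|^{\alpha_i}$ with $\alpha_i$ tuned on the boundary of $A_{r_2\vec{p}}$ but outside $A_{r_1\vec{p}}$.

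For part (2) I would use (3) to reduce openness of $A_{\vec{p}}$ to openness of the classical scalar $A_q$ classes. Given $\vec{w}\in A_{\vec{p}}$, part (3) supplies $\nu_{\vec{w}}\in A_{mp}$ and $\sigma_i\in A_{mp_i'}$; the scalar reverse-Hölder self-improvement then produces $\varepsilon_0,\varepsilon_1,\ldots,\varepsilon_m>0$ with $\nu_{\vec{w}}\in A_{mp-\varepsilon_0}$ and $\sigma_i\in A_{mp_i'-\varepsilon_i}$. Choosing $r<1$ close enough to $1$ (and $\geq 1/p_0$) so that $mrp \leq mp-\varepsilon_0$ and $m(rp_i)' \leq mp_i'-\varepsilon_i$ hold simultaneously, the converse direction of (3) applied to the exponent vector $r\vec{p}$ delivers $\vec{w}\in A_{r\vec{p}}$.

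The principal obstacle sits inside part (3). The forward implication is clean once the identity $\sum_i a_i = 1$ is spotted, but the converse requires pairing the $A_{mp}$ and $A_{mp_i'}$ data so that the $w_i$-powers recombine exactly into the $A_{\vec{p}}$ product; a naive Hölder would either lose constants or produce the wrong aggregate exponent on $\nu_{\vec{w}}$. A secondary but routine point in part (2) is ensuring one $r<1$ works for all $m+1$ self-improvement conditions at once, which is just a finite minimum over the $\varepsilon_j$'s, and the strict-inclusion assertion in part (1) is the only place where an explicit counterexample (rather than an inequality) is needed.
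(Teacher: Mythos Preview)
The paper does not actually prove this lemma; it is stated as a known result with citations to \cite{LOPTT} (for part (3)) and \cite{CWX} (for parts (1) and (2)). Your overall strategy matches the standard proofs in those references: part (3) via H\"older with the convex combination $\sum_i a_i = 1$, part (1) via H\"older plus a power-weight counterexample, and part (2) via the scalar reverse-H\"older self-improvement fed back through (3).

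There is, however, a real gap in your argument for (2). When you pass from $\vec{p}$ to $r\vec{p}$ with $r<1$, the exponent $rp_i$ decreases, so its conjugate $(rp_i)'$ \emph{increases}; hence your desired inequality $m(rp_i)' \le mp_i' - \varepsilon_i$ can never hold for $r<1$. More fundamentally, the weight appearing in the $A_{m(rp_i)'}$ condition of part (3) applied to $r\vec p$ is $w_i^{1-(rp_i)'}$, not $\sigma_i = w_i^{1-p_i'}$; it changes with $r$. So knowing $\sigma_i \in A_{mp_i'-\varepsilon_i}$ does not directly place the \emph{correct} weight into the correct class, and the ``converse of (3)'' cannot be invoked as you describe.

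The fix is to work directly with the $A_{r\vec p}$ quantity rather than going through (3) a second time. The product weight $\nu_{\vec w}$ is unchanged under the scaling $\vec p \mapsto r\vec p$ (since $rp/(rp_i)=p/p_i$), and $w_i^{1-(rp_i)'} = \sigma_i^{\beta_i}$ with $\beta_i = (p_i-1)/(rp_i-1) > 1$. Reverse H\"older on $\sigma_i$ (available since $\sigma_i \in A_{mp_i'} \subset A_\infty$ by part (3)) controls $\bigl(\fint_Q \sigma_i^{\beta_i}\bigr)^{1/\beta_i}$ by $C\fint_Q \sigma_i$ for $\beta_i$ close enough to $1$; since $\beta_i/(rp_i)' = 1/(rp_i')$, the exponents recombine to give $[\vec w]_{A_{r\vec p}} \lesssim [\vec w]_{A_{\vec p}}^{1/r}$ for $r$ sufficiently close to $1$.
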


\subsection{Dyadic cubes}
Denote by $\ell(Q)$ the sidelength of the cube $Q$. Given a cube $Q_0 \subset \Rn$, let $\D(Q_0)$ denote the set of all dyadic cubes with respect to $Q_0$, that is, the cubes obtained by repeated subdivision of $Q_0$ and each of its descendants into $2^n$ congruent subcubes.

\begin{definition}
A collection $\D$ of cubes is said to be a dyadic grid if it satisfies
\begin{enumerate}
\item [(1)] For any $Q \in \D$, $\ell(Q) = 2^k$ for some $k \in \Z$.
\item [(2)] For any $Q,Q' \in \D$, $Q \cap Q' = \{Q,Q',\emptyset\}$.
\item [(3)] The family $\D_k=\{Q \in \D; \ell(Q)=2^k\}$ forms a partition of $\Rn$ for any $k \in \Z$.
\end{enumerate}
\end{definition}

\begin{definition}
A subset $\S$ of a dyadic grid is said to be $\eta$-sparse, $0<\eta<1$, if for every $Q \in \S$, there exists a measurable set $E_Q \subset Q$ such that $|E_Q| \geq \eta |Q|$, and the sets $\{E_Q\}_{Q \in \S}$ are pairwise disjoint.
\end{definition}

By a median value of a measurable function $f$ on a cube $Q$ we mean a possibly non-unique, real number $m_f (Q)$ such that
\[
\max \big\{|\{x \in Q : f(x) > m_f(Q) \}|,
|\{x \in Q : f(x) < m_f(Q) \}| \big\} \leq |Q|/2.
\]
The decreasing rearrangement of a measurable function $f$ on $\Rn$ is
defined by
\[
f^*(t) = \inf \{ \alpha > 0 : |\{x \in \Rn : |f(x)| > \alpha \}| < t \},
\quad 0 < t < \infty.
\]
The local mean oscillation of $f$ is
\[
\omega_{\lambda}(f; Q)
= \inf_{c \in \R} \big( (f-c) \mathbf{1}_{Q} \big)^* (\lambda |Q|),
\quad 0 < \lambda < 1.
\]
Given a cube $Q_0$, the local sharp maximal function is
defined by
\[
M_{\lambda; Q_0}^{\sharp} f (x)
= \sup_{x \in Q \subset Q_0} \omega_{\lambda}(f; Q).
\]

Observe that for any $\delta > 0$ and $0 < \lambda < 1$
\begin{equation}\label{e:mfQ}
|m_f(Q)| \leq (f \mathbf{1}_Q)^* (|Q|/2) \ \ \text{and} \ \
(f \mathbf{1}_Q)^* (\lambda |Q|) \leq
\left( \frac{1}{\lambda |Q|} \int_{Q} |f|^{\delta} dx \right)^{1/{\delta}}.
\end{equation}
The following theorem was proved by Hyt\"{o}nen \cite[Theorem~2.3]{Hy2} in order to improve Lerner's formula given in \cite{Ler11} by getting rid of the local sharp maximal function.

\begin{lemma}
Let $f$ be a measurable function on $\Rn$ and let $Q_0$ be a fixed cube. Then there exists a (possibly empty) sparse family $\S(Q_0) \subset \D(Q_0)$ such that
\begin{equation}\label{eq:mf}
|f (x) - m_f (Q_0)| \leq 2 \sum_{Q \in \S(Q_0)} \omega_{2^{-n-2}}(f; Q) \mathbf{1}_Q (x), \quad a. e. ~ x \in Q_0.
\end{equation}
\end{lemma}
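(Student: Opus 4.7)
The plan is to execute a greedy Calder\'on--Zygmund stopping time on the dyadic tree $\D(Q_0)$. Write $\omega(Q) := \omega_{2^{-n-2}}(f;Q)$ and set $\lambda := 2^{-n-2}$. The first step is to relate the median to a near-optimiser in the definition of $\omega_\lambda$: pick $c_Q$ with $|\{x \in Q : |f(x) - c_Q| > \omega(Q)\}| \leq \lambda |Q|$; since $\lambda < 1/2$, the set where $|f - c_Q| \leq \omega(Q)$ occupies more than half of $Q$, which forces $|m_f(Q) - c_Q| \leq \omega(Q)$ by the definition of the median. Combining via the triangle inequality yields
\[
|E_Q| \leq \lambda|Q|, \qquad E_Q := \{x \in Q : |f(x) - m_f(Q)| > 2\omega(Q)\}.
\]

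I would then construct $\S(Q_0)$ recursively. Set $\S_0 := \{Q_0\}$; given $\S_k$, for each $Q \in \S_k$ let $\mathrm{ch}(Q)$ be the family of maximal dyadic subcubes $P \subsetneq Q$ with $|P \cap E_Q| > \tfrac12|P|$, and put $\S_{k+1} := \bigcup_{Q \in \S_k} \mathrm{ch}(Q)$, $\S(Q_0) := \bigcup_{k \geq 0} \S_k$. Sparseness is immediate: the elements of $\mathrm{ch}(Q)$ are pairwise disjoint dyadic subcubes, and
\[
\sum_{P \in \mathrm{ch}(Q)} |P| < 2|E_Q| \leq 2\lambda|Q| = 2^{-n-1}|Q|,
\]
so $E_Q^{\star} := Q \setminus \bigcup_{P \in \mathrm{ch}(Q)} P$ satisfies $|E_Q^{\star}| \geq (1 - 2^{-n-1})|Q| > |Q|/2$, and the sets $\{E_Q^{\star}\}_{Q \in \S(Q_0)}$ are pairwise disjoint.

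For the pointwise bound I would fix $x \in Q_0$ and follow the chain $Q_0 = Q^{(0)} \supsetneq Q^{(1)} \supsetneq \cdots \supsetneq Q^{(N)}$ in $\S(Q_0)$ with $Q^{(N)}$ the deepest such cube containing $x$. By Lebesgue differentiation applied inside each $Q^{(k)}$, almost every point of $E_{Q^{(k)}}$ sits in some member of $\mathrm{ch}(Q^{(k)})$ (otherwise the density-one property at that point would contradict maximality); hence a.e.\ such $x$ satisfies $x \notin E_{Q^{(N)}}$ and therefore $|f(x) - m_f(Q^{(N)})| \leq 2\omega(Q^{(N)})$. Telescoping as
\[
f(x) - m_f(Q_0) = \big(f(x) - m_f(Q^{(N)})\big) + \sum_{k=1}^{N} \big(m_f(Q^{(k)}) - m_f(Q^{(k-1)})\big),
\]
and using $|m_f(Q^{(k)}) - m_f(Q^{(k-1)})| \leq 2\omega(Q^{(k-1)})$ (derived by a repetition of the median/optimiser comparison, exploiting that the dyadic parent of $Q^{(k)}$ is not a stopping cube) folds everything into $2\sum_{Q \in \S(Q_0)} \omega(Q)\mathbf{1}_Q(x)$, since each $Q^{(k)}$ contains $x$.

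The main obstacle is calibrating this final telescoping step: the density threshold $\tfrac12$ in the stopping rule, the factor $2$ in front of $\omega(Q)$, and the choice $\lambda = 2^{-n-2}$ must be mutually consistent so that (i) $\mathrm{ch}(Q)$ fills only a fixed fraction strictly less than $1$ of $Q$ (sparseness), and simultaneously (ii) the non-stopping parent of any selected cube keeps enough mass outside $E_{Q^{(k-1)}}$ to close the median comparison $|m_f(Q^{(k)}) - m_f(Q^{(k-1)})| \leq 2\omega(Q^{(k-1)})$. The gain over Lerner's original formula via $M_{\lambda;Q_0}^{\sharp}$ is that the local sharp maximal function is eliminated in favour of the oscillations $\omega(Q)$ themselves, and the price to pay is precisely this more delicate median accounting along the chain.
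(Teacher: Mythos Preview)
The paper does not supply its own proof of this lemma; it is quoted verbatim from Hyt\"onen \cite[Theorem~2.3]{Hy2}, so there is no in-paper argument to compare against. Your outline has the right architecture (a Calder\'on--Zygmund stopping time on the density of the bad set $E_Q=\{x\in Q:|f(x)-m_f(Q)|>2\omega(Q)\}$), and the verification of $|E_Q|\le\lambda|Q|$, the sparseness of the selected family, and the control of $|f(x)-m_f(Q^{(N)})|$ on the terminal cube are all correct.

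The genuine gap is the telescoping bound $|m_f(Q^{(k)})-m_f(Q^{(k-1)})|\le 2\omega(Q^{(k-1)})$. Your justification (``the dyadic parent of $Q^{(k)}$ is not a stopping cube'') yields only $|\hat Q^{(k)}\cap E_{Q^{(k-1)}}|\le\tfrac12|\hat Q^{(k)}|$, hence a bound for the median of the \emph{parent}: $|m_f(\hat Q^{(k)})-m_f(Q^{(k-1)})|\le 2\omega(Q^{(k-1)})$. It says nothing about $m_f(Q^{(k)})$ itself. In fact your stopping rule selects $Q^{(k)}$ precisely because \emph{more} than half of it lies in $E_{Q^{(k-1)}}$, so fewer than half of its points satisfy $|f-m_f(Q^{(k-1)})|\le 2\omega(Q^{(k-1)})$; this places no constraint whatsoever on $m_f(Q^{(k)})$ in terms of $\omega(Q^{(k-1)})$, and one can manufacture $f$ with $|m_f(Q^{(k)})-m_f(Q^{(k-1)})|$ arbitrarily large while $\omega(Q^{(k-1)})$ is fixed. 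The point you flag as ``the main obstacle'' is therefore not merely a calibration issue but an actual missing step: one must either organise the telescoping so that the parent--child median estimate $|m_f(Q')-m_f(Q)|\le 2\omega_\lambda(f;Q)$ (valid for every dyadic child $Q'$ of $Q$ because $2^n\lambda=\tfrac14<\tfrac12$) is the only comparison ever invoked, or modify the selected family so that the relevant oscillations all live on cubes in $\S(Q_0)$. Hyt\"onen's proof does the former; your sketch, as written, appeals to a median comparison that does not follow from the hypotheses available at that point.
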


\subsection{Orlicz maximal operators}
A function $\Phi:[0,\infty) \to [0,\infty)$ is called a Young function if it is continuous, convex, strictly increasing, and satisfies
\begin{equation*}
\lim_{t\to 0^{+}}\frac{\Phi(t)}{t}=0 \quad\text{and}\quad \lim_{t\to\infty}\frac{\Phi(t)}{t}=\infty.
\end{equation*}
Given $p \in[1, \infty)$, we say that a Young function $\Phi$ is a  $p$-Young function, if $\Psi(t)=\Phi(t^{1/p})$ is a Young function.

If $A$ and $B$ are Young functions, we write $A(t) \simeq B(t)$ if there are constants $c_1, c_2>0$ such that
$c_1 A(t) \leq B(t) \leq c_2 A(t)$ for all $t \geq t_0>0$. Also, we denote $A(t) \preceq B(t)$ if there exists $c>0$ such that $A(t) \leq B(ct)$ for all $t \geq t_0>0$. Note that for all Young functions $\phi$, $t \preceq \phi(t)$. Further, if $A(t)\leq cB(t)$ for some $c>1$, then by convexity, $A(t) \leq B(ct)$.

A function $\Phi$ is said to be doubling, or $\Phi \in \Delta_2$, if there is a constant $C>0$ such that $\Phi(2t) \leq C \Phi(t)$ for any $t>0$. Given a Young function $\Phi$, its complementary function $\bar{\Phi}:[0,\infty) \to [0,\infty)$ is defined by
\[
\bar{\Phi}(t):=\sup_{s>0}\{st-\Phi(s)\}, \quad t>0,
\]
which clearly implies that
\begin{align}\label{eq:stst}
st \leq \Phi(s) + \bar{\Phi}(t), \quad s, t > 0.
\end{align}
Moreover, one can check that $\bar{\Phi}$ is also a Young function and
\begin{equation}\label{eq:Young-1}
t \leq \Phi^{-1}(t) \bar{\Phi}^{-1}(t) \leq 2t, \qquad t>0.
\end{equation}
In turn, by replacing $t$ by $\Phi(t)$ in first inequality of \eqref{eq:Young-1}, we obtain
\begin{equation}\label{eq:Young-2}
\bar{\Phi} \Big(\frac{\Phi(t)}{t}\Big) \leq \Phi(t), \qquad t>0.
\end{equation}

Given a Young function $\Phi$, we define the Orlicz space $L^{\Phi}(\Omega, \mu)$ to be the function space with Luxemburg norm
\begin{align}\label{eq:Orlicz}
\|f\|_{L^{\Phi}(\Omega, u)} := \inf\bigg\{\lambda>0:
\int_{\Omega} \Phi \Big(\frac{|f(x)|}{\lambda}\Big) d\mu(x) \leq 1 \bigg\}.
\end{align}
Now we define the Orlicz maximal operator
\begin{align*}
M_{\Phi}f(x) := \sup_{Q \ni x} \|f\|_{\Phi, Q} := \sup_{Q \ni x} \|f\|_{L^{\Phi}(Q, \frac{dx}{|Q|})},
\end{align*}
where the supremum is taken over all cubes $Q$ in $\Rn$. When $\Phi(t)=t^p$, $1\leq p<\infty$,
\begin{align*}
\|f\|_{\Phi, Q} = \bigg(\fint_{Q} |f(x)|^p dx \bigg)^{\frac1p}=:\|f\|_{p, Q}.
\end{align*}
In this case, if $p=1$, $M_{\Phi}$ agrees with the classical Hardy-Littlewood maximal operator $M$; if $p>1$, $M_{\Phi}f=M_pf:=M(|f|^p)^{1/p}$. If $\Phi(t) \preceq \Psi(t)$, then $M_{\Phi}f(x) \leq c M_{\Psi}f(x)$ for all $x \in \Rn$.

The H\"{o}lder inequality can be generalized to the scale of Orlicz spaces \cite[Lemma~5.2]{CMP11}.
\begin{lemma}
Given a Young function $A$, then for all cubes $Q$,
\begin{equation}\label{eq:Holder-AA}
\fint_{Q} |fg| dx \leq 2 \|f\|_{A, Q} \|g\|_{\bar{A}, Q}.
\end{equation}
More generally, if $A$, $B$ and $C$ are Young functions such that $A^{-1}(t) B^{-1}(t) \leq c_1 C^{-1}(t), $ for all $t \geq t_0>0$,
then
\begin{align}\label{eq:Holder-ABC}
\|fg\|_{C, Q} \leq c_2 \|f\|_{A, Q} \|g\|_{B, Q}.
\end{align}
\end{lemma}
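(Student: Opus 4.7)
The plan is to prove both inequalities by normalizing the functions so that their Luxemburg norms equal $1$ and then applying a pointwise Young-type inequality.

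For the first inequality, I would first dispose of trivial cases and assume $0 < \|f\|_{A,Q}, \|g\|_{\bar{A},Q} < \infty$. By homogeneity, rescale $f, g$ so that $\|f\|_{A,Q} = \|g\|_{\bar{A},Q} = 1$; the definition \eqref{eq:Orlicz} then gives $\fint_Q A(|f|)\,dx \leq 1$ and $\fint_Q \bar{A}(|g|)\,dx \leq 1$. Apply the pointwise Young inequality \eqref{eq:stst} with $s = |f(x)|$ and $t = |g(x)|$ to obtain $|f(x)g(x)| \leq A(|f(x)|) + \bar{A}(|g(x)|)$, and average over $Q$ to conclude $\fint_Q |fg|\,dx \leq 2$. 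Undoing the normalization yields \eqref{eq:Holder-AA}.

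For \eqref{eq:Holder-ABC} the strategy is analogous but requires an auxiliary three-function Young inequality. I would first establish the pointwise bound
\begin{equation*}
C\!\left(\frac{uv}{c_1}\right) \leq A(u) + B(v), \qquad u,v \geq u_0,
\end{equation*}
by the following trick: given $u,v$, set $\tau := \max\{A(u), B(v)\}$. Since $A$ and $B$ are strictly increasing, $A^{-1}(\tau) \geq u$ and $B^{-1}(\tau) \geq v$, so the hypothesis $A^{-1}(\tau) B^{-1}(\tau) \leq c_1 C^{-1}(\tau)$ (valid for $\tau \geq t_0$) yields $uv \leq c_1 C^{-1}(\tau)$, and applying the increasing function $C$ produces $C(uv/c_1) \leq \tau \leq A(u) + B(v)$. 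Now normalize $\|f\|_{A,Q} = \|g\|_{B,Q} = 1$; by convexity of $C$, the pointwise bound gives
\begin{equation*}
\fint_Q C\!\left(\frac{|fg|}{2c_1}\right) dx \leq \frac{1}{2}\fint_Q \bigl(A(|f|) + B(|g|)\bigr)\,dx \leq 1,
\end{equation*}
so $\|fg\|_{C,Q} \leq 2c_1$, and undoing the normalization gives \eqref{eq:Holder-ABC} with $c_2 = 2c_1$.

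The main obstacle I anticipate is bookkeeping the threshold $t_0$: the hypothesis $A^{-1}(t)B^{-1}(t) \leq c_1 C^{-1}(t)$ is only assumed for $t \geq t_0$, so the pointwise inequality on $C(uv/c_1)$ only holds when $A(u)$ or $B(v)$ exceeds $t_0$. The standard remedy is to absorb the defect on the low-range set $\{A(|f|) + B(|g|) \leq 2t_0\}$ into an additive constant depending on $t_0$ and then exploit the doubling/dilation properties of Young functions (together with $t \preceq \phi(t)$ noted earlier) to replace $c_1$ by a slightly larger $c_2$. This is a purely technical adjustment and does not affect the structure of the argument outlined above.
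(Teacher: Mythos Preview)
The paper does not prove this lemma at all; it simply quotes it from \cite[Lemma~5.2]{CMP11}. Your argument is the standard one found in that reference and is correct in outline: normalize, apply the pointwise Young inequality (respectively the three-function version derived from $A^{-1}B^{-1}\le c_1 C^{-1}$), and average. Your handling of the threshold $t_0$ is also the usual workaround, so nothing further is needed.
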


 The following result is an extension of the well-known Coifman-Rochberg theorem. The proof can be found in \cite[Lemma~4.2]{HP}.
\begin{lemma}
Let $\Phi$ be a Young function and $w$ be a nonnegative function such that $M_{\Phi}w(x)<\infty$ a.e.. Then
 \begin{align}
 \label{eq:CR-Phi} [(M_{\Phi}w)^{\delta}]_{A_1} &\le c_{n,\delta}, \quad\forall \delta \in (0, 1),
 \\
\label{eq:MPhiRH} [(M_{\Phi} w)^{-\lambda}]_{RH_{\infty}} &\le c_{n,\lambda},\quad\forall \lambda>0.
 \end{align}
 \end{lemma}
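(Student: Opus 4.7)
The plan is to adapt the classical Coifman--Rochberg argument to the Orlicz setting. Fix a cube $Q$, set $m := \essinf_{z \in Q} M_\Phi w(z)$, and decompose $w = w\chi_{3Q} + w\chi_{(3Q)^c} =: w^{\rm loc} + w^{\rm tail}$. Since the Luxemburg norm is subadditive, $M_\Phi w \le M_\Phi w^{\rm loc} + M_\Phi w^{\rm tail}$ pointwise. The strategy is to bound the tail uniformly on $Q$ by a constant times $m$, and to control the local part in an averaged $L^\delta$-sense; from the resulting decomposition both \eqref{eq:CR-Phi} and \eqref{eq:MPhiRH} will follow.

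For the tail, any cube $R \ni y$ with $y \in Q$ meeting $(3Q)^c$ must satisfy $\ell(R) \ge \ell(Q)$, so the triple dilate $\widetilde R$ of $R$ contains $Q$. The convexity relation $\Phi(t/a) \le \Phi(t)/a$ for $a \ge 1$ gives $\|w\|_{\Phi, R} \le 3^n \|w\|_{\Phi, \widetilde R}$, and since $\widetilde R \ni z$ for every $z \in Q$, the supremum in $R$ is at most $3^n M_\Phi w(z)$; taking the infimum in $z \in Q$ gives $M_\Phi w^{\rm tail}(y) \le c_n m$. For the local part, I would first establish the Luxemburg weak-type bound
\begin{equation*}
\bigl|\{y \in Q :\, M_\Phi w^{\rm loc}(y) > t\}\bigr| \le C_n\, \frac{|3Q|\, \|w\|_{\Phi, 3Q}}{t}, \qquad t \ge \|w\|_{\Phi, 3Q},
\end{equation*}
via a Vitali covering argument, using $\Phi(w/t) \le (\|w\|_{\Phi, 3Q}/t)\,\Phi(w/\|w\|_{\Phi, 3Q})$ for such $t$ together with the normalization $\fint_{3Q} \Phi(w/\|w\|_{\Phi, 3Q}) \le 1$. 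Kolmogorov's inequality then yields $\fint_Q (M_\Phi w^{\rm loc})^\delta \le C_{n,\delta}\, \|w\|_{\Phi, 3Q}^\delta$ for $0 < \delta < 1$. Since $\|w\|_{\Phi, 3Q} \le m$ (take $R = 3Q$ in the supremum defining $M_\Phi w(z)$ for $z \in Q$), using the subadditivity $(a+b)^\delta \le a^\delta + b^\delta$ gives $\fint_Q (M_\Phi w)^\delta \le c_{n,\delta}\, m^\delta$, which is precisely \eqref{eq:CR-Phi}.

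For \eqref{eq:MPhiRH}, the same decomposition yields the inclusion $\{y \in Q : M_\Phi w(y) > Km\} \subset \{y \in Q : M_\Phi w^{\rm loc}(y) > (K - c_n) m\}$, and the weak-type estimate above (with $m \ge \|w\|_{\Phi, 3Q}$) forces the right-hand set to have measure at most $|Q|/2$ once $K$ is chosen large enough depending only on $n$. On the complementary set $E \subset Q$ one has $|E| \ge |Q|/2$ and $(M_\Phi w)^{-\lambda}(y) \ge (Km)^{-\lambda}$, so $\fint_Q (M_\Phi w)^{-\lambda} \ge \tfrac12 K^{-\lambda} m^{-\lambda} = \tfrac12 K^{-\lambda} \esssup_Q (M_\Phi w)^{-\lambda}$, which is \eqref{eq:MPhiRH}. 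The main obstacle is the Luxemburg weak-type substitute for $M_\Phi$: without assuming $\Phi \in \Delta_2$ one cannot invoke a standard endpoint boundedness, so the Vitali covering together with the convexity and normalization properties of Young functions recalled in Section 2.3 is what avoids circular reasoning. Everything else is a routine arithmetic adaptation of Coifman--Rochberg.
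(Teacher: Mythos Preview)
Your argument is correct: the Coifman--Rochberg decomposition into a local and a tail part, together with the convexity-based weak-type substitute for $M_\Phi$, handles both conclusions cleanly, and your care about not assuming $\Phi\in\Delta_2$ is well placed. The paper does not prove this lemma at all---it simply cites \cite[Lemma~4.2]{HP}---so you have supplied exactly the argument the authors omit; your proof is essentially the one found in that reference.
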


Given $p \in (1, \infty)$, a Young function $\Phi$ is said to satisfy the $B_p$ condition (or, $\Phi \in B_p$) if for some $c>0$,
\begin{align}\label{def:Bp}
\int_{c}^{\infty} \frac{\Phi(t)}{t^p} \frac{dt}{t} < \infty.
\end{align}
Observe that if \eqref{def:Bp} is finite for some $c>0$, then it is finite for every $c>0$. Let $[\Phi]_{B_p}$ denote the value if $c=1$ in \eqref{def:Bp}. It was shown in \cite[Proposition~5.10]{CMP11} that if $\Phi$ and $\bar{\Phi}$ are doubling Young functions, then $\Phi \in B_p$ if and only if
\begin{align*}
\int_{c}^{\infty} \bigg(\frac{t^{p'}}{\bar{\Phi}(t)}\bigg)^{p-1} \frac{dt}{t} < \infty.
\end{align*}

Let us present two types of $B_p$ bumps. An important special case is the ``log-bumps" of the form
\begin{align}\label{eq:log}
A(t) =t^p \log(e+t)^{p-1+\delta}, \quad  B(t) =t^{p'} \log(e+t)^{p'-1+\delta},\quad \delta>0.
\end{align}
Another interesting example is the ``loglog-bumps" as follows:
\begin{align}
\label{eq:loglog-1} &A(t)=t^p \log(e+t)^{p-1} \log\log(e^e+t)^{p-1+\delta}, \quad \delta>0\\
\label{eq:loglog-2} &B(t)=t^{p'} \log(e+t)^{p'-1} \log\log(e^e+t)^{p'-1+\delta}, \quad \delta>0.
\end{align}
Then one can verify that in both cases above, $\bar{A} \in B_{p'}$ and $\bar{B} \in B_p$ for any $1<p<\infty$.

The $B_p$ condition can be also characterized by the boundedness of the Orlicz maximal operator $M_{\Phi}$. Indeed, the following result was given in \cite[Theorem~5.13]{CMP11} and \cite[eq. (25)]{HP}.
\begin{lemma}\label{lem:MBp}
Let $1<p<\infty$. Then $M_{\Phi}$ is bounded on $L^p(\Rn)$ if and only if $\Phi \in B_p$. Moreover, $\|M_{\Phi}\|_{L^p(\Rn) \to L^p(\Rn)} \le C_{n,p} [\Phi]_{B_p}^{\frac1p}$. In particular, if the Young function $A$ is the same as the first one in \eqref{eq:log} or \eqref{eq:loglog-1}, then
\begin{equation}\label{eq:MAnorm}
\|M_{\bar{A}}\|_{L^{p'}(\Rn) \to L^{p'}(\Rn)} \le c_n p^2 \delta^{-\frac{1}{p'}},\quad\forall \delta \in (0, 1].
\end{equation}
\end{lemma}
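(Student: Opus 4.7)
The plan is to establish the characterization by a distribution function argument and then extract the quantitative log-bump estimate by an explicit computation of the complementary Young function.

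For the sufficient direction, I would use the following standard trick. Fix $f \ge 0$ and $\lambda > 0$, and split $f = f_\lambda + f^\lambda$, where $f^\lambda = f \mathbf{1}_{\{f > \lambda/2\}}$. Since $\|f_\lambda\|_{\Phi, Q} \le \lambda/2$ for every cube $Q$, we have the inclusion $\{M_\Phi f > \lambda\} \subset \{M_\Phi f^\lambda > \lambda/2\}$. Next, observe that $M_\Phi g(x) > \lambda/2$ forces the existence of a cube $Q \ni x$ with $\fint_Q \Phi(2|g|/\lambda) > 1$, hence $\{M_\Phi g > \lambda/2\} \subset \{M[\Phi(2|g|/\lambda)] > 1\}$. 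Apply the weak $(1,1)$ bound for the Hardy-Littlewood maximal operator $M$ to obtain
\[
|\{M_\Phi f > \lambda\}| \le c_n \int_{\{|f|>\lambda/2\}} \Phi(2|f(x)|/\lambda) dx.
\]
Insert this into the layer cake formula $\|M_\Phi f\|_p^p = p \int_0^\infty \lambda^{p-1}|\{M_\Phi f > \lambda\}| d\lambda$, use Fubini, and substitute $s = 2|f(x)|/\lambda$; the inner integral becomes $(2|f(x)|)^p \int_1^\infty \Phi(s) s^{-p-1} ds$, which is exactly $[\Phi]_{B_p}$ up to a constant. This yields the bound $\|M_\Phi\|_{L^p \to L^p} \le C_{n,p} [\Phi]_{B_p}^{1/p}$.

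For the necessary direction, the argument is qualitative: if $\int_1^\infty \Phi(t)/t^p \, dt/t = \infty$, one tests $M_\Phi$ against the radial function $f(x) = \Phi^{-1}(|x|^{-n}) \mathbf{1}_{|x| \le 1}$ (or a dyadic variant), verifies $f \in L^p$, and shows directly from the definition of $\|\cdot\|_\Phi$ that $M_\Phi f \notin L^p$.

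For the quantitative log-bump estimate, I would first compute the complementary Young function. For $A(t) = t^p \log(e+t)^{p-1+\delta}$ the standard rule gives
\[
\bar A(t) \simeq \frac{t^{p'}}{\log(e+t)^{1 + \frac{\delta}{p-1}}}, \quad t \ge t_0,
\]
(and similarly, with an extra $\log\log$ factor, for \eqref{eq:loglog-1}). Then
\[
[\bar A]_{B_{p'}} = \int_1^\infty \frac{\bar A(s)}{s^{p'}} \frac{ds}{s} \simeq \int_1^\infty \log(e+s)^{-1 - \frac{\delta}{p-1}} \frac{ds}{s} \simeq \frac{p-1}{\delta}.
\]
Combining this with the first part and using $(p-1)^{1/p'} \le p$, together with the explicit form of the constant $C_{n,p} \le c_n p$ produced in the distribution function step above, gives the claimed $c_n p^2 \delta^{-1/p'}$.

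The delicate point is tracking the exact dependence on $p$ and $\delta$ through the argument: the factor of $p$ from the layer cake formula and the factor from $(p-1)^{1/p'}$ must both be bounded by $p$ to produce the $p^2$, while the $\delta^{-1/p'}$ comes solely from the asymptotic evaluation of $[\bar A]_{B_{p'}}$. The sufficiency part itself is otherwise routine; the technical hurdle is just verifying the asymptotic form of $\bar A$ for the loglog case with enough precision to isolate $\delta$ as the only small parameter in the final constant.
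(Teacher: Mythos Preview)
The paper does not actually prove this lemma: it is stated with the attribution ``The following result was given in \cite[Theorem~5.13]{CMP11} and \cite[eq.~(25)]{HP}'' and no argument is supplied. Your proposal is therefore not being compared against a proof in the paper but against the cited literature, and your outline is precisely the standard one found there: the splitting $f=f_\lambda+f^\lambda$, the reduction to the weak $(1,1)$ bound for $M$ applied to $\Phi(2|f|/\lambda)$, and the layer-cake/Fubini computation that produces $[\Phi]_{B_p}$ are exactly the steps in P\'erez's original argument as presented in \cite{CMP11}; the log-bump computation of $\bar A$ and $[\bar A]_{B_{p'}}\simeq (p-1)/\delta$ is the content of \cite[eq.~(25)]{HP}.

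Two small remarks on the tracking of constants. First, the step $\|f_\lambda\|_{\Phi,Q}\le \lambda/2$ requires the normalization $\Phi(1)\le 1$, which is harmless but should be stated. Second, your accounting actually gives $C_{n,p'}\lesssim 1$ uniformly (since $(c_n p')^{1/p'}$ is bounded), so the factor $p^2$ in \eqref{eq:MAnorm} is not coming from ``a factor of $p$ from the layer cake'' as you suggest; the extra powers of $p$ in \cite{HP} arise from the implicit constants in the asymptotic $\bar A(t)\simeq t^{p'}/\log(e+t)^{1+\delta/(p-1)}$, which do depend on $p$. Your final bound is correct as an upper estimate, but the explanation of where each $p$ comes from is slightly off.
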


\begin{definition}\label{def:sepbum}
Given $p \in (1, \infty)$, let $A$ and $B$ be Young functions such that $\bar{A} \in B_{p'}$ and $\bar{B} \in B_p$. We say that the pair of weights $(u, v)$ satisfies the {\tt double bump condition} with respect to $A$ and $B$ if
\begin{align}\label{eq:uvABp}
[u, v]_{A,B,p}:=\sup_{Q} \|u^{\frac1p}\|_{A,Q} \|v^{-\frac1p}\|_{B,Q} < \infty.
\end{align}
where the supremum is taken over all cubes $Q$ in $\Rn$. Also, $(u, v)$ is said to satisfy the separated bump condition if
\begin{align}
\label{eq:uvAp} [u, v]_{A,p'} &:= \sup_{Q} \|u^{\frac1p}\|_{A,Q} \|v^{-\frac1p}\|_{p',Q} < \infty,
\\
\label{eq:uvpB} [u, v]_{p,B} &:= \sup_{Q} \|u^{\frac1p}\|_{p,Q} \|v^{-\frac1p}\|_{B,Q} < \infty.
\end{align}
\end{definition}

Note that if $A(t)=t^p$ in \eqref{eq:uvAp} or $B(t)=t^p$ in \eqref{eq:uvpB}, each of them actually is two-weight $A_p$ condition and we denote them by $[u, v]_{A_p}:=[u, v]_{p,p'}$.  Also, the separated bump condition is weaker than the double bump condition. Indeed, \eqref{eq:uvABp} implies \eqref{eq:uvAp} and \eqref{eq:uvpB}, but the reverse direction is incorrect.
The first fact holds since $\bar{A} \in B_{p'}$ and $\bar{B} \in B_p$ respectively indicate $A$ is a $p$-Young function and $B$ is a $p'$-Young function. The second fact was shown in \cite[Section~7]{ACM} by constructing log-bumps.

\begin{lemma}\label{lem:M-uv}
Let $1<p<\infty$, let $A$, $B$ and $\Phi$ be Young functions such that $A \in B_p$ and $A^{-1}(t)B^{-1}(t) \lesssim \Phi^{-1}(t)$ for any $t>t_0>0$. If a pair of weights $(u, v)$ satisfies $[u, v]_{p, B}<\infty$, then
\begin{align}\label{eq:MPhi-uv}
\|M_{\Phi}f\|_{L^p(u)} \leq C [u, v]_{p, B} [A]_{B_p}^{\frac1p} \|f\|_{L^p(v)}.
\end{align}
Moreover, \eqref{eq:MPhi-uv} holds for $\Phi(t)=t$ and $B=\bar{A}$ satisfying the same hypotheses.  In this case, $\bar{A} \in B_p$ is necessary.
\end{lemma}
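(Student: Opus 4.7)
The plan is to imitate the classical two-weight argument of Pérez, passing through generalized Hölder and a principal-cube decomposition, and to close the estimate by invoking Lemma~\ref{lem:MBp} for $M_A$ on the unweighted $L^p$. First I would reduce to a dyadic Orlicz maximal operator $M_\Phi^{\mathcal D}$: using the standard $3^n$--shifted grids trick (or any of the usual dyadic reductions), $M_\Phi f \lesssim \sum_{k=1}^{3^n} M_\Phi^{{\mathcal D}_k} f$, so it suffices to prove the inequality with $M_\Phi^{\mathcal D}$ in place of $M_\Phi$.

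Next, I would apply the generalized Hölder inequality \eqref{eq:Holder-ABC} with the Young functions $A$, $B$ and $\Phi$, which is permitted because $A^{-1}(t)B^{-1}(t) \lesssim \Phi^{-1}(t)$: writing $f = (fv^{1/p})\cdot v^{-1/p}$, for every cube $Q$
\[
\|f\|_{\Phi, Q} \;\le\; c\,\|fv^{1/p}\|_{A,Q}\,\|v^{-1/p}\|_{B,Q}.
\]
The central step is then a principal-cube extraction. For each $k\in\Z$ set $\Omega_k=\{x:M_\Phi^{\mathcal D} f(x)>a^k\}$ with $a>1$ suitably large, and let $\{Q_{k,j}\}_j$ be the maximal dyadic cubes in $\Omega_k$. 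Defining $E_{k,j}=Q_{k,j}\setminus\Omega_{k+1}$, one checks by the maximality of $Q_{k,j}$ (its dyadic parent avoids $\Omega_k$) that the sets $\{E_{k,j}\}$ are pairwise disjoint and satisfy $|E_{k,j}|\ge \tfrac12|Q_{k,j}|$ once $a$ is chosen large enough so that the Luxemburg norm enjoys a doubling-type bound on the cube containing $Q_{k,j}$.

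With this sparse family at hand, a layer-cake computation yields
\[
\int_{\Rn}(M_\Phi^{\mathcal D} f)^p u\,dx \;\lesssim\; \sum_{k,j} \|f\|_{\Phi, Q_{k,j}}^{\,p}\, u(Q_{k,j}).
\]
Inserting the Hölder bound above, then extracting $[u,v]_{p,B}^{p}$ via
\[
\|v^{-1/p}\|_{B,Q_{k,j}}^{p}\,\frac{u(Q_{k,j})}{|Q_{k,j}|}
= \big(\|u^{1/p}\|_{p,Q_{k,j}}\|v^{-1/p}\|_{B,Q_{k,j}}\big)^{p} \le [u,v]_{p,B}^{\,p},
\]
and finally using $|Q_{k,j}|\le 2|E_{k,j}|$ together with the disjointness of the $E_{k,j}$ and the pointwise bound $\|fv^{1/p}\|_{A, Q_{k,j}} \le M_A(fv^{1/p})(x)$ for $x\in E_{k,j}$, the sum is controlled by $[u,v]_{p,B}^{\,p}\,\|M_A(fv^{1/p})\|_{L^p(\Rn)}^{p}$. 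Lemma~\ref{lem:MBp} now gives $\|M_A(fv^{1/p})\|_{L^p(\Rn)}\le C[A]_{B_p}^{1/p}\|fv^{1/p}\|_{L^p(\Rn)}=C[A]_{B_p}^{1/p}\|f\|_{L^p(v)}$, which is \eqref{eq:MPhi-uv}.

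The main obstacle is the sparseness estimate $|E_{k,j}|\ge \tfrac12|Q_{k,j}|$ for the Orlicz dyadic maximal function, which is not as immediate as for the classical $M$; I would handle it by exploiting the homogeneity of the Luxemburg norm and the Calderón–Zygmund selection of the $Q_{k,j}$'s, calibrating the constant $a$ accordingly. For the moreover part, specialize to $\Phi(t)=t$ and $B=\bar A$: inequality \eqref{eq:Young-1} gives $A^{-1}(t)\bar A^{-1}(t)\le 2t=2\Phi^{-1}(t)$, so the generalized Hölder step is just \eqref{eq:Holder-AA}, and the same scheme yields the bound under the hypothesis $A\in B_p$. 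The necessity of $\bar A\in B_p$ for this endpoint follows from a standard testing argument—plugging $f=\mathbf 1_Q$ and $u=v=\bar A$-adapted weights into \eqref{eq:MPhi-uv} forces the $B_p$-integrability of $\bar A$, reflecting the $B_p$-characterization in Lemma~\ref{lem:MBp}.
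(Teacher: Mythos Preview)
The paper does not give its own proof of this lemma; immediately after the statement it simply records that ``the two-weight inequality above was established in \cite[Theorem~5.14]{CMP11} and \cite[Theorem~3.1]{CP99}.'' Your sketch is precisely the P\'erez--Cruz-Uribe argument that those references contain: reduce to a dyadic Orlicz maximal operator, split $f=(fv^{1/p})v^{-1/p}$ and apply the generalized H\"older inequality \eqref{eq:Holder-ABC}, run a Calder\'on--Zygmund stopping-time to get a sparse family, pull out $[u,v]_{p,B}$, and finish with the $B_p$-bound for $M_A$ from Lemma~\ref{lem:MBp}. So your approach is not an alternative route---it \emph{is} the route the paper defers to.

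Two small remarks. First, the ``obstacle'' you flag (sparseness for the Orlicz CZ cubes) is handled exactly as you indicate: maximality gives $\|f\|_{\Phi,\hat Q_{k,j}}\le a^k$ for the dyadic parent, and the scaling $\|f\|_{\Phi,Q}\le 2^n\|f\|_{\Phi,\hat Q}$ of the Luxemburg average yields $\|f\|_{\Phi,Q_{k,j}}\le 2^n a^k$; a weak $(1,1)$-type bound for $M_\Phi^{\mathcal D}$ localized to $Q_{k,j}$ then gives $|Q_{k,j}\cap\Omega_{k+1}|\le \tfrac12|Q_{k,j}|$ for $a$ large. Second, in the ``moreover'' clause with $\Phi(t)=t$ and $B=\bar A$, note that $\bar B=A$, so the classical P\'erez necessity statement (the bump on $v^{-1/p}$ must have complementary function in $B_p$) translates to $A\in B_p$ being necessary; your testing idea is the right one, and the conclusion should be read accordingly.
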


The two-weight inequality above was established in \cite[Theorem~5.14]{CMP11} and \cite[Theorem~3.1]{CP99}.  The weak type inequality for $M_{\Phi}$ was also obtained in \cite[Proposition~5.16]{CMP11} as follows.

\begin{lemma}\label{lem:Muv-weak}
Let $1<p<\infty$, let $B$ and $\Phi$ be Young functions such that $t^{\frac1p} B^{-1}(t) \lesssim \Phi^{-1}(t)$ for any $t>t_0>0$. If a pair of weights $(u, v)$ satisfies $[u, v]_{p, B}<\infty$, then
\begin{align}\label{eq:MPuv}
\|M_{\Phi}f\|_{L^{p,\infty}(u)} \leq C \|f\|_{L^p(v)}.
\end{align}
Moreover, \eqref{eq:MPuv} holds for $M$ if and only if $[u, v]_{A_p}<\infty$.
\end{lemma}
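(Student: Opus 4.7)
The plan is to run a Calder\'on--Zygmund-type decomposition of the level set $\{M_\Phi f > \lambda\}$ and then apply the generalized H\"older inequality \eqref{eq:Holder-ABC} with the choice $A(t)=t^p$, so that each local Orlicz average splits into an $L^p$ factor carrying $fv^{1/p}$ and an $L^B$ factor carrying $v^{-1/p}$; the latter is absorbed into the bump constant $[u,v]_{p,B}$ from the separated bump condition.

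First, I would reduce the continuous Orlicz maximal operator to a dyadic one via the $3^n$-shifted-lattice trick, so that it suffices to estimate $u(\{M_\Phi^\D f > c_n\lambda\})$ on each grid $\D$. On such a grid the level set is a pairwise disjoint union of maximal cubes $\{Q_j\}$ satisfying $\|f\|_{\Phi,Q_j}>c_n\lambda$ (existence of maximal cubes is guaranteed after a preliminary truncation, e.g.\ assuming $f\in L^{\infty}_c(\Rn)$, and recovering the general case by monotone convergence). Writing $f=(fv^{1/p})\cdot v^{-1/p}$ and invoking \eqref{eq:Holder-ABC} with $A^{-1}(t)=t^{1/p}$ together with the pointwise product hypothesis $t^{1/p}B^{-1}(t)\lesssim\Phi^{-1}(t)$ gives $\|f\|_{\Phi,Q_j}\lesssim\|fv^{1/p}\|_{p,Q_j}\|v^{-1/p}\|_{B,Q_j}$. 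Multiplying by $\|u^{1/p}\|_{p,Q_j}$ and using $[u,v]_{p,B}<\infty$ yields
\[
\lambda\,\|u^{1/p}\|_{p,Q_j} \,\lesssim\, [u,v]_{p,B}\,\|fv^{1/p}\|_{p,Q_j}.
\]
Raising to the $p$-th power and noting $u(Q_j)=|Q_j|\,\|u^{1/p}\|_{p,Q_j}^{p}$ produces the key estimate $\lambda^p u(Q_j)\lesssim[u,v]_{p,B}^{p}\int_{Q_j}|f|^p v\,dy$. Summing over the disjoint $\{Q_j\}$ and over the finitely many shifted grids closes the weak type bound.

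For the ``moreover'' part with $M_\Phi=M$, specialising $\Phi(t)=t$ forces the hypothesis to become $B^{-1}(t)\lesssim t^{1/p'}$, i.e.\ $B(t)\succeq t^{p'}$, so that $\|v^{-1/p}\|_{p',Q}\lesssim\|v^{-1/p}\|_{B,Q}$. Consequently the classical two-weight $A_p$ constant $[u,v]_{A_p}$ corresponds to the minimal admissible choice $B(t)=t^{p'}$ of the $B$-bump, and the sufficiency direction follows from the main argument with this choice. Necessity is the standard testing argument: inserting $f=v^{1-p'}\mathbf{1}_Q$ into the putative weak type inequality and taking $\lambda$ slightly below $\fint_Q v^{1-p'}\,dy$ reproduces the $A_p$ ratio at every cube $Q$.

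The principal technical obstacle is the decomposition step: unlike $L^1$-averages, Orlicz averages $\|f\|_{\Phi,Q}$ do not behave well under cube enlargement, so one cannot directly apply a Vitali covering lemma to the cubes realising $M_\Phi f>\lambda$. The shifted dyadic reduction combined with a truncation ensuring the existence of maximal dyadic cubes is the standard workaround; once that is arranged, the remainder of the argument is a routine application of the generalized H\"older inequality \eqref{eq:Holder-ABC} and the bump hypothesis.
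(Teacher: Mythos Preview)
Your proof is correct and follows the standard route. Note, however, that the paper does not give its own proof of this lemma: it is quoted verbatim from \cite[Proposition~5.16]{CMP11}, so there is no in-paper argument to compare against. The argument you outline---reduction to dyadic Orlicz maximal functions via shifted grids, maximal-cube decomposition of the level set, and the generalized H\"older inequality \eqref{eq:Holder-ABC} with $A(t)=t^p$ to split off the $v^{-1/p}$ factor and absorb it into $[u,v]_{p,B}$---is precisely the proof given in that reference. Your handling of the ``moreover'' clause (specializing $B(t)=t^{p'}$ for sufficiency; testing on $f=v^{1-p'}\mathbf{1}_Q$ for necessity) is also the standard one, with the usual caveat that one should truncate $v$ away from zero to ensure local integrability of $v^{1-p'}$ before passing to the limit.
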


\section{Sharpness in aperture $\alpha$}

The goal of this section is to give the proof of Theorem \ref{thm:S-sharp}. To this end, we establish some fundamental estimates.
\begin{lemma}\label{lem:cont}
$\psi(x,\vec{y})$ is continuous at $(x_0,y_{1,0},\dots, y_{m,0})$ with $x_0 \neq y_{j, 0}$, $j=1,2,\dots,m.$
\end{lemma}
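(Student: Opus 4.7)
The plan is to prove joint continuity via a telescoping argument: decompose the increment $\psi(x,\vec{y}) - \psi(x_0,\vec{y}_0)$ into $m+1$ one-variable perturbations and control each by the hypothesized Hölder estimates. The key observation is that since $x_0 \neq y_{j,0}$ for every $j$, the quantity
\[
r_0 := \min_{1 \le j \le m} |x_0 - y_{j,0}|
\]
is strictly positive, which keeps the telescoping configurations uniformly away from the diagonal where $\psi$ is allowed to blow up.

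First, I would fix $\eta \in (0, r_0/4)$ and consider $(x,\vec{y})$ with $|x-x_0| < \eta$ and $|y_j - y_{j,0}| < \eta$ for all $j$. I would introduce the intermediate points
\[
P_0 = (x, y_1, \ldots, y_m), \quad P_1 = (x_0, y_1, \ldots, y_m),
\]
and for $k = 1, \ldots, m$,
\[
P_{k+1} = (x_0, y_{1,0}, \ldots, y_{k,0}, y_{k+1}, \ldots, y_m),
\]
so that $P_{m+1} = (x_0,\vec{y}_0)$, and write the telescoping identity
\[
\psi(x,\vec{y}) - \psi(x_0,\vec{y}_0) = \sum_{k=0}^{m} \bigl[\psi(P_k) - \psi(P_{k+1})\bigr].
\]

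Next, I would verify the prerequisites of the smoothness hypotheses at each step. For the first summand ($k=0$), at the configuration $P_0$ we have $|x - y_j| \ge |x_0 - y_{j,0}| - |x - x_0| - |y_j - y_{j,0}| \ge r_0 - 2\eta \ge r_0/2$ for every $j$, so $|x - x_0| < \eta < r_0/4 \le \tfrac12 \max_j|x-y_j|$; the first smoothness condition then gives $|\psi(P_0) - \psi(P_1)| \le A\,\eta^\gamma$ (the denominator in the hypothesis is $\ge 1$). For the $k$-th summand ($k \ge 1$), at the configuration $P_k$ the entries with index $j < k$ are $y_{j,0}$ (distance to $x_0$ at least $r_0$) and those with index $j \ge k$ are the perturbed $y_j$ (distance to $x_0$ at least $r_0 - \eta \ge r_0/2$); hence $|y_k - y_{k,0}| < \eta < r_0/4 \le \tfrac12 \max_j$ of these distances, and the second smoothness condition yields $|\psi(P_k) - \psi(P_{k+1})| \le A\,\eta^\gamma$.

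Summing the $m+1$ estimates produces $|\psi(x,\vec{y}) - \psi(x_0,\vec{y}_0)| \le (m+1)A\,\eta^\gamma$, which can be made arbitrarily small by shrinking $\eta$, proving continuity at $(x_0, \vec{y}_0)$. The only delicate point is the bookkeeping in the verification step: at each of the $m+1$ stages one must track which coordinates have already been moved to their target values and which are still perturbed, and then confirm that both the perturbation size and the corresponding $\max$-distance lie on the correct side of $r_0/2$. Once this accounting is carried out, the smoothness assumptions do all the work.
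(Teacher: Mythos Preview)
Your proof is correct and follows essentially the same approach as the paper: both telescope the increment $\psi(x,\vec y)-\psi(x_0,\vec y_0)$ by first moving $x$ to $x_0$ and then successively replacing each $y_j$ by $y_{j,0}$, applying the two smoothness hypotheses at each step after checking that the perturbation is smaller than half the relevant maximal distance. The only cosmetic difference is that the paper allows the slightly asymmetric neighborhood $|x-x_0|<\tfrac14 r_0$, $|y_j-y_{j,0}|<\tfrac12 r_0$, while you take a common $\eta<r_0/4$; your bookkeeping of which coordinates have been moved at each intermediate point $P_k$ is in fact a bit more explicit than the paper's.
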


\begin{proof}
Let $x_0\neq y_{j, 0}$ for $j=1,2,\dots,m,$ and let
\begin{align*}
|x-x_0|<\frac{1}{4} \min_{1\leq i \leq m}\{|x_0-y_{i,0}|\}, \qquad
|y_j-y_{j,0}|<\frac{1}{2} \min_{1\leq i \leq m}\{|x_0-y_{i,0}|\}.
\end{align*}
Then we get
\begin{equation*}
|y_j-y_{j,0}|< \frac{1}{2}|x_0-y_{j,0}|
\end{equation*}
and
\begin{equation*}
|x_0-y_{j,0}|\leq |x_0-y_j|+|y_j-y_{j,0}|<|x_0-y_j|+\frac{1}{2}|x_0-y_{j,0}|
\end{equation*}
and so
\begin{equation*}
|x_0-y_{j,0}|< 2|x_0-y_j|, \quad j=1,\dots,m,
\end{equation*}
which implies
\begin{equation*}
|x-x_0|<\frac{1}{4}|x_0-y_{j,0}|<\frac{1}{2}|x_0-y_{j}|,\quad j=1,\dots,m,
\end{equation*}
Therefore, we have
\begin{align*}
&|\psi(x,y_1,\dots,y_m)-\psi(x_0,y_{1,0},\dots,y_{m,0})|
\\
&\leq |\psi(x,y_1,\dots,y_m)-\psi(x_0,y_1,\dots,y_m)|
\\
&\qquad+\sum_{j=1}^m |\psi(x_0,y_{1,0},\dots,y_{j-1, 0},y_j,y_{j+1},\dots, y_m)
\\
&\qquad\qquad\quad-\psi(x_0,y_{1,0},\dots,y_{j-1, 0},y_{j,0},y_{j+1},\dots, y_m)|
\\
&\leq \frac{A|x-x_0|^\gamma}{(1+\sum_{i=1}^m |x-y_i|)^{mn+\delta+\gamma}}
+\sum_{j=1}^m \frac{A|y_j-y_{j,0}|^\gamma}{(1+\sum_{i=1}^m |x_0-y_i|)^{mn+\delta+\gamma}}.
\end{align*}
This shows  $\psi(x, \vec{y})$ is continuous at $(x_0,y_{1,0},\dots, y_{m,0})\in \mathbb{R}^{n(m+1)}$ with $x_0 \neq y_{j,0}$, $j=1,2,\dots,m.$
\end{proof}

\begin{lemma}\label{lem:AA}
There exist $x_0 \in \Rn$, $r_0>0$, $t_0>1$ and $f_j\in \S(\Rn)$, $j=1,\ldots,m$, such that
\begin{equation}\label{eq:AA}
A_0 := \iint_{\Omega_0} |\psi_t(\vec{f})(y)|^2dydt \in (0, \infty),
\end{equation}
where $\Omega_0:=B(0, |x_0|+r_0) \times [1, t_0]$.
\end{lemma}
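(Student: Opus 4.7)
The plan is to establish finiteness cheaply and focus almost all the work on positivity by constructing $f_j$ as tight Schwartz approximations to delta masses at well-chosen points.

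For finiteness: the region $\Omega_0$ is bounded in both $y$ and $t$, and the size condition gives the crude pointwise bound $|\psi_t(\vec f)(y)| \le A t^{-mn}\prod_{j=1}^m \|f_j\|_{L^1}$ (since the denominator $(1+\sum|x-y_i|)^{mn+\delta}\ge 1$). Squaring and integrating over $\Omega_0$ shows $A_0<\infty$ immediately, irrespective of the particular choice of $f_j$ and of $r_0,t_0$. So the real content is $A_0>0$.

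For positivity, I would first invoke the (implicit) nondegeneracy of the kernel: assuming $\psi$ is not identically zero on its domain, choose a point $(\bar x,\bar y_1,\dots,\bar y_m)$ with $\bar x\neq \bar y_j$ for all $j$ and $\psi(\bar x,\bar y_1,\dots,\bar y_m)\neq 0$; set $c:=|\psi(\bar x,\bar y_1,\dots,\bar y_m)|>0$. By Lemma \ref{lem:cont}, $\psi$ is continuous at this point, so there exists $\varepsilon>0$ such that $|\psi(x,y_1,\dots,y_m)|\ge c/2$ whenever $|x-\bar x|<\varepsilon$ and $|y_j-\bar y_j|<\varepsilon$ for every $j$. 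Then take nonnegative Schwartz functions $f_j\ge 0$ with $\int f_j = 1$ and $\operatorname{supp} f_j\subset B(\bar y_j,\varepsilon/4)$, and with phases arranged so no cancellation can occur (positivity of $f_j$ suffices since integration against a single-sign bump against a kernel of constant sign on a small set cannot cancel).

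Next, fix $x_0:=\bar x$ and pick $t_0>1$ so close to $1$ and $r_0>0$ so small that whenever $y\in B(x_0,r_0)$ and $t\in[1,t_0]$, one has $|y/t-\bar x|<\varepsilon$, and simultaneously $|y_j/t-\bar y_j|<\varepsilon$ for every $y_j\in\operatorname{supp} f_j$. Then on the entire domain of integration defining $\psi_t(\vec f)(y)$, the rescaled argument lies in the neighborhood where $\psi$ stays nonzero and close to $\psi(\bar x,\bar y_1,\dots,\bar y_m)$, so the integrand is of one sign (up to a small perturbation) and
\[
|\psi_t(\vec f)(y)| \ \ge\ \frac{c}{4}\, t^{-mn}\prod_{j=1}^m \int f_j(y_j)\,dy_j \ =\ \frac{c}{4}\,t^{-mn}
\]
for all $(y,t)\in B(x_0,r_0)\times[1,t_0]$. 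Since $B(x_0,r_0)\subset B(0,|x_0|+r_0)$, this region sits inside $\Omega_0$, giving
\[
A_0 \ \ge\ \Bigl(\tfrac{c}{4}\Bigr)^{2} t_0^{-2mn}\,|B(x_0,r_0)|\,(t_0-1)\ >\ 0,
\]
which together with the finiteness above yields $A_0\in(0,\infty)$.

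The only delicate step is the last approximation argument: one must ensure that the small perturbation of $\psi$ by the bumps $f_j$ (instead of exact Dirac masses) preserves strict positivity of $|\psi_t(\vec f)(y)|$ uniformly over a set of positive measure in $(y,t)$. This is handled by uniform continuity of $\psi$ on the compact neighborhood of $(\bar x,\bar y_1,\dots,\bar y_m)$ and by choosing the supports of $f_j$ small enough relative to $\varepsilon$; it is the only place where Lemma \ref{lem:cont} and the Schwartz nature of $f_j$ interact nontrivially.
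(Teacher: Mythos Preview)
Your proposal is correct and follows essentially the same approach as the paper's proof: both select a point where $\psi$ is nonzero, use the continuity lemma to find a neighborhood where $\psi$ has constant sign, take nonnegative Schwartz bumps $f_j$ supported near the $\bar y_j$, and then choose $t_0$ close to $1$ and $r_0$ small so that the rescaled arguments $(y/t, y_1/t,\dots,y_m/t)$ stay in that neighborhood; the finiteness step is identical in both. The only cosmetic differences are that the paper writes down an explicit formula for $t_0$ in terms of $r_0$ and $\max\{|x_0|,|y_{1,0}|,\dots,|y_{m,0}|\}$, whereas you invoke a limiting argument, and you record a quantitative lower bound $|\psi_t(\vec f)(y)|\ge (c/4)t^{-mn}$ while the paper is content with strict positivity.
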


\begin{proof}
Since $\psi$ is a non-zero function in $\R^{n(m+1)}$, there exist $x_0,y_{1,0},\dots,y_{m,0} \in \Rn$ such that $x_0\neq y_{i,0}(i=1,\dots,m)$ and $\psi(x_0,y_{1,0},\dots,y_{m,0})\neq 0$.  By Lemma \ref{lem:cont}, there exists $r_0>0$ such that $\psi(x,\vec{y})>0$ or $\psi(x,\vec{y})<0$ for all $x\in B(x_0,r_0)$ and $y_j\in B(y_{j,0},r_0)$, $j=1,\dots,m$. Keeping these notation in mind, we set
\begin{equation*}
t_0=\bigg(1-\frac{r_0}{2\max\{|x_0|, |y_{1, 0}|,\ldots, |y_{m,0}|\}}\bigg)^{-1},
\end{equation*}
if  $\max\{|x_0|, |y_{1,0}|,\dots,|y_{m,0}|\} \ge r_0$, and $t_0=2$ otherwise. Then, one has
\begin{align}\label{eq:xt}
\Big|\frac{x}{t}-x_0\Big|<r_0 \quad\text{and}\quad \Big|\frac{y_i}{t}-y_{i, 0}\Big|<r_0, \quad i=1,\ldots,m,
\end{align}
for all $1<t<t_0$, $|x-x_0|<\frac{r_0}{2}$ and $|y_i-y_{i, 0}|<\frac{r_0}{2}$, $i=1,\ldots,m$. Indeed,  if  $\max\{|x_0|, |y_{1,0}|,\dots,|y_{m,0}|\} < r_0$, it follows $|\frac{x}{t}-x_0|<\frac{|x-x_0|}{t}+(1-\frac{1}{t})|x_0|<|x-x_0|+\frac{|x_0|}{2}<r_0,$ and similarly we get $|\frac{y_i}{t}-y_{i,0}|<r_0$, $i=1,\ldots,m$. In the case $y_{j_0, 0}:=\max\{|x_0|, |y_{1,0}|,\dots,|y_{m,0}|\} \ge r_0$,  we have
\begin{align*}
\frac{|y-y_{i,0}|}{t} < |y-y_{j_0,0}| < \Big(1-\frac{1}{t_0}\Big)|y_{j_0,0}|
=\bigg(1-\Big(1-\frac{r_0}{2|y_{j_0,0}|}\Big)\bigg)|y_{j_0,0}| = \frac{r_0}{2}.
\end{align*}
These show that
\begin{align*}
\Big|\frac{y_i}{t}-y_{i,0}\Big|<r_0,\quad i=1,\ldots,m.
\end{align*}
Similarly, we get $|\frac{x}{t}-x_0|<r_0$. This shows \eqref{eq:xt}.

We may assume that $\psi(x, \vec{y})>0$ for $1<t<t_0$ and $|x-x_0|<\frac{r_0}{2}$, $|y_i-y_{i,0}|<\frac{r_0}{2}$,  $i=1,\ldots,m$. Pick $f_j\in \S(\Rn)$, $j=1,\ldots,m$ such that $\supp f_j \in B(y_{j,0},\frac{r_0}{2})$ and $f_j(y_j)>0$ for $y_j\in B(y_{j,0}, \frac{r_0}{4})$, $j=1,2,\dots,m$. Then it follows from \eqref{eq:xt} that
\begin{align*}
\psi_t(\vec{f})(x) =\frac{1}{t^{mn}} \int_{ B(y_{1,0},\frac{r_0}{2})\times \dots \times  B(y_{m,0},\frac{r_0}{2})} \psi(\frac{x}{t},\frac{y_1}{t},\dots,\frac{y_m}{t}) \prod_{j=1}^m f_j(y_j)d\vec{y} > 0
\end{align*}
for all $1<t<t_0$ and $|x-x_0|>\frac{r_0}{2}$. In particular,
\begin{align}\label{eq:postive}
A_0=\iint_{B(0, |x_0|+r_0) \times [1, t_0]}\big| \psi_t(\vec{f})(y)\big|^2dydt>0.
\end{align}

On the other hand, by using the size condition of $\psi$, we obtain for every $(y, t) \in \Omega_0$,
\begin{align*}
|\psi_t(\vec{f})(y)| & \le \frac{1}{t^{mn}}\int_{\mathbb{R}^{mn}}
\bigg|\psi\Big(\frac{y}{t}, \frac{y_1}{t},\cdots,\frac{y_m}{t}\Big)\bigg| \prod_{j=1}^m |f_j(y_j)|dy_j
\\
&\le \frac{1}{t^{mn}}\int_{\mathbb{R}^{mn}}
\frac{\prod_{j=1}^m |f_j(y_j)|dy_j}{\left(1+\frac{|y-y_1|}{t}+\cdots + \frac{|y-y_m|}{t}\right)^{mn+\delta}}
\le \frac{\prod_{j=1}^m ||f_j||_{L^1}}{t^{mn}}.
\end{align*}
This immediately yields that
\begin{equation}\label{eq:finite}
A_0 \le  \iint_{\Omega_0} \frac{\prod_{j=1}^m ||f_j||^2_{L^1}}{t^{2mn}}dydt
\lesssim \prod_{j=1}^m ||f_j||^2_{L^1}< \infty.
\end{equation}
Consequently, the desired result follows from \eqref{eq:postive} and \eqref{eq:finite}.
\end{proof}

\begin{lemma}\label{unbounded}
Let $0<\lambda<2m$ and $\frac{1}{m}<p<\frac{2}{\lambda}$. Then $g^*_{\lambda}$ is not bounded from $L^{p_1}\times \cdots \times L^{p_m}$ to $L^p$, where $\frac{1}{p}=\frac{1}{p_1}+\cdots +\frac{1}{p_m}$ with $1\le p_1,\ldots, p_m<\infty$.
\end{lemma}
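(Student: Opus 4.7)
The plan is to exhibit an explicit counterexample: a tuple of Schwartz functions $\vec{f}$ for which $g^*_\lambda(\vec f)$ fails to lie in $L^p(\Rn)$ although $\prod_j\|f_j\|_{L^{p_j}(\Rn)}$ is finite. The decisive input is the nondegeneracy statement of Lemma \ref{lem:AA}, which produces a region on which $|\psi_t(\vec f)(y)|^2$ has strictly positive integral. Once that is in hand, the rest is just exploiting the geometric fact that $t/(t+|x-y|)$ behaves like $|x|^{-1}$ when $(y,t)$ stays in a fixed compact set and $|x|\to\infty$.

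First I would apply Lemma \ref{lem:AA} to select $f_1,\ldots,f_m\in\S(\Rn)$, a point $x_0\in\Rn$, and constants $r_0>0$, $t_0>1$ such that
\[
A_0 \;=\; \iint_{\Omega_0}|\psi_t(\vec{f})(y)|^2\,dy\,dt \;\in\;(0,\infty), \qquad \Omega_0 \;=\; B(0,|x_0|+r_0)\times[1,t_0].
\]
Next I would derive a pointwise lower bound at infinity. For $|x|\geq R_0:=2(|x_0|+r_0)$ and $(y,t)\in\Omega_0$, elementary triangle inequalities give $\tfrac12|x|\leq|x-y|\leq\tfrac32|x|$ together with $1\leq t\leq t_0$, so uniformly on $\Omega_0$ one has
\[
\bigg(\frac{t}{t+|x-y|}\bigg)^{n\lambda}\cdot\frac{1}{t^{n+1}} \;\gtrsim\; |x|^{-n\lambda}.
\]
Restricting the double integral defining $g^*_\lambda(\vec f)(x)^2$ to $\Omega_0$ and inserting this bound produces
\[
g^*_\lambda(\vec f)(x)^2 \;\gtrsim\; A_0\,|x|^{-n\lambda}, \qquad |x|\geq R_0.
\]

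Integrating this lower bound in $x$ yields
\[
\|g^*_\lambda(\vec f)\|_{L^p(\Rn)}^p \;\gtrsim\; \int_{|x|\geq R_0}|x|^{-n\lambda p/2}\,dx,
\]
and the hypothesis $p<2/\lambda$ is precisely $n\lambda p/2<n$, so the tail integral diverges. Since each $f_j\in\S(\Rn)$ belongs to every $L^{p_j}(\Rn)$ with $1\leq p_j<\infty$, the product $\prod_j\|f_j\|_{L^{p_j}(\Rn)}$ is finite, which rules out any bound of $g^*_\lambda$ from $L^{p_1}\times\cdots\times L^{p_m}$ into $L^p$. The only real obstacle is ensuring $A_0>0$; Lemma \ref{lem:AA} supplies this via the continuity lemma together with a careful choice of Schwartz $f_j$ supported in small balls around the points $y_{j,0}$, so that everything else reduces to the elementary size estimate $|x-y|\asymp|x|$ and the divergence criterion above.
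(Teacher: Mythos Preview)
Your proposal is correct and follows essentially the same approach as the paper: invoke Lemma \ref{lem:AA} to produce $\vec f\in\S(\Rn)^m$ and a compact region $\Omega_0$ with $A_0>0$, then use $t+|x-y|\simeq|x|$ on $\Omega_0$ for large $|x|$ to obtain $g^*_\lambda(\vec f)(x)^2\gtrsim A_0|x|^{-n\lambda}$ and conclude by the divergence of $\int_{|x|>R_0}|x|^{-n\lambda p/2}\,dx$ when $p<2/\lambda$. The only cosmetic difference is that the paper takes $R_0=2(|x_0|+r_0+t_0)$ while you take $R_0=2(|x_0|+r_0)$ and absorb $t_0$ into the implicit constants; both are fine.
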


\begin{proof}
By Lemma \ref{lem:AA}, there exist $x_0 \in \Rn$, $r_0>0$, $t_0>1$ and $f_j\in \S(\Rn)$, $j=1,\ldots,m$, such that
$0<A_0<\infty$, where $A_0$ is defined in \eqref{eq:AA}. Write $R_0:=2(|x_0|+r_0+t_0)$. Then for all $|x|>R_0$ and $(y, t) \in \Omega_0$,
\begin{align*}
\frac{|x|}{2}< |x|-|y|\le t+|x-y|\le |x|+|y|+t_0 \le 2|x|.
\end{align*}
Thus,  $t+|x-y| \simeq |x|.$ This gives that for all $|x|>R_0$,
\begin{align*}
g^*_{\lambda}(\vec{f})(x)^2
&\geq \iint_{\Omega_0}  \frac{t^{n\lambda-n-1}}{|x|^{n\lambda}} |\psi_t(\vec{f})(y)|^2 dydt
\\
&\gtrsim \frac{1}{|x|^{n\lambda}} \iint_{\Omega_0} |\psi_t(\vec{f})(y)|^2dydt
=\frac{A_0}{|x|^{n\lambda}}.
\end{align*}
Therefore, for any $\lambda\le \frac{2}{p}$,
\begin{align*}
\|g^*_{\lambda}(\vec{f})\|_{L^p}^p \gtrsim A_0^{\frac{p}{2}} \int_{|x|>R_0} \frac{dx}{|x|^{\frac{n\lambda p}{2}}}=\infty.
\end{align*}
On the other hand, for  $\vec{f} \in \S(\Rn)\times \cdots \times \S(\Rn)$, we have $\prod_{j=1}^m ||f_j||_{L^{p_j}}<\infty$.  As a consequence,  $g^*_{\lambda}$ is not bounded from  $L^{p_1}\times \cdots \times L^{p_m}$ to $L^p$ whenever $\lambda\le \frac{2}{p}$.

Finally, for $0<\lambda<2m$ (equivalently $\frac{1}{m}<\frac{2}{\lambda}$), one can choose $p\in (\frac1m,\frac{2}{\lambda})$, $1<p_1,\ldots,p_m<\infty$ with $\frac{1}{p}=\frac{1}{p_1}+\cdots +\frac{1}{p_m}$ such that $g^*_{\lambda}$ is not bounded from  $L^{p_1}\times \cdots \times L^{p_m}$ to $L^p$.
\end{proof}

\begin{proof}[\textbf{Proof of Theorem \ref{thm:S-sharp}.}]
It follows from \cite{BH} that
\begin{equation}\label{eq:Ssharp}
\|S_{\alpha}(\vec{f})\|_{L^p(\nu_{\vec{w}})}
\lesssim \alpha^{mn} [\vec{w}]_{A_{\vec{p}}}^{\max\{\frac{1}{2},\frac{p_1'}{p},\cdots,
\frac{p_m'}{p}\}}\prod_{i=1}^{m}\|f_i\|_{L^{p_i}(w_i)},
\end{equation}
for all $\frac{1}{p}=\frac{1}{p_1}+\cdots +\frac{1}{p_m}$ with $1<p_1,\ldots,p_m<\infty$, and for all $\vec{w} \in A_{\vec{p}}$, where the implicit constant is independent of $\alpha$ and $\vec{w}$. Now, we seek for $\gamma(\alpha)=\alpha^r$  such that
\begin{equation*}
||S_{\alpha}(\vec{f})||_{L^{p}(\nu_{\vec{w}})}
\lesssim \gamma(\alpha) [\vec{w}]_{A_{\vec{p}}}^{\max\{\frac{1}{2},\frac{p_1'}{p},\cdots,\frac{p_m'}{p}\}}\prod_{i=1}^{m}\|f_i\|_{L^{p_i}(w_i)}.
\end{equation*}
We follow Lerner's idea to show $r\geq mn$ for any $1/m<p<\infty.$
In fact, for the case $r<mn$ we can reach a contradiction as follows.
This means that the power growth $\gamma(\alpha)=\alpha^{mn}$ in
\eqref{eq:Ssharp} is sharp.

Using the standard estimate
\begin{equation}\label{eq:gSS}
 g^*_{\lambda}(\vec{f})(x) \le S_{1}(\vec{f})(x)+\sum_{k=0}^{\infty} 2^{-\frac{k\lambda n} {2}}S_{2^{k+1}}(\vec{f})(x),
\end{equation}
we get for some fixed $\frac{1}{q}=\frac{1}{q_1}+\cdots+\frac{1}{q_m}$ with $1<q_1,\ldots,q_m<\infty$, and $\gamma(\alpha)=\alpha^{r_0}$
\begin{equation*}
\|g^*_{\lambda}(\vec{f})\|_{L^{q}(\nu_{\vec{w}})}
\lesssim \bigg(\sum_{k=0}^{\infty} 2^{-\frac{k\lambda n} {2}}  2^{kr_0}\bigg)
[\vec{w}]_{A_{\vec{q}}}^{\max\{\frac{1}{2},\frac{q_1'}{q},\cdots,\frac{q_m'}{q}\}}\prod_{i=1}^{m}\|f_i\|_{L^{q_i}(w_i)}.
\end{equation*}
This means that if $\lambda> \frac{2r_0}{n}$,  $g^*_{\lambda}$ is bounded from $L^{q_1}(w_1)\times \cdots \times L^{q_m}(w_m)$ to $L^q(\nu_{\vec{w}})$. From this, by extrapolation(see \cite{LMO}), we get  $g^*_{\lambda}$ is bounded from $L^{p_1}\times \cdots \times L^{p_m}$ to $L^p$ for any $p>1/m$, whenever $\lambda> \frac{2r_0}{n}$. But by Lemma \ref{unbounded}, we know  $g^*_{\lambda}$ is not bounded from $L^{p_1}\times \cdots \times L^{p_m}$ to $L^p$ for $\lambda<2m$ and $\frac{1}{m}<p<\frac{2}{\lambda}$. If $r_0<mn,$ we would obtain a contradiction to the latter fact for $p$ sufficiently close to $1/m$.
\end{proof}

\section{Bump and Fefferman-Stein inequalities}
In this section, we will prove bump inequalities (Theorem \ref{thm:bump}) and Fefferman-Stein inequalities (Theorem \ref{thm:FS}). Our strategy is to use the sparse domination for the multilinear Littlewood-Paley operators.

\begin{proof}[\textbf{Proof of Theorem \ref{thm:bump}.}]
Given $r \ge 1$ and a sparse family $\S$, we denote
\begin{align*}
\A_{\S}^r(\vec{f})(x) := \bigg(\sum_{Q \in \S} \prod_{i=1}^m \langle f_i \rangle_{Q}^r \mathbf{1}_Q(x)\bigg)^{\frac1r}.
\end{align*}
The sparse domination below will provide us great convenience:
\begin{align}
\label{eq:S-sparse} S_{\alpha}\vec{f}(x) &\le c_n \alpha^{mn} \sum_{j=1}^{3^n} \A_{\S_j}^2 (|\vec{f}|)(x) ,\quad \text{a.e. } x \in \Rn,
\\
\label{eq:g-sparse} g_{\lambda}^{*}\vec{f}(x) &\le \frac{c_n}{2^{n(\lambda-2m)}-1} \sum_{j=1}^{3^n} \A_{\S_j}^2 (|\vec{f}|)(x) ,\quad \text{a.e. } x \in \Rn,
\end{align}
where $\S_j$ is a sparse family for each $j=1,\ldots,3^n$. These results are explicitly proved in \cite{BH}. By \eqref{eq:S-sparse} and \eqref{eq:g-sparse}, the inequalities \eqref{eq:SLp} and \eqref{eq:gLp} follow from the following
\begin{align}\label{eq:ASLp}
\|\A_{\S}^2(\vec{f})\|_{L^p(u)} \lesssim \mathscr{N}_p \prod_{j=1}^m \|f_j\|_{L^{p_j}(v_j)},
\end{align}
for every sparse family $\S$, where the implicit constant does not depend on $\S$.

To show \eqref{eq:ASLp}, we begin with the case $1<p \le 2$. Actually,
the H\"{o}lder inequality \eqref{eq:Holder-AA} gives that
\begin{align}\label{eq:ASp1}
\|\A_{\S}^2(\vec{f})\|_{L^p(u)}^p
&=\int_{\Rn} \bigg(\sum_{Q \in \S} \prod_{j=1}^m \langle f_j \rangle_Q^2 \mathbf{1}_{Q}(x)\bigg)^{\frac{p}{2}} u(x) dx
\le \sum_{Q \in \S} \prod_{j=1}^m\langle |f_j| \rangle_Q^p u(Q)
\nonumber \\
&\lesssim \sum_{Q \in \S}
\prod_{j=1}^m \|f_j v_j^{\frac1{p_j}}\|_{\bar{B_j}, Q}^p
\|v_j^{-\frac1{p_j}}\|_{B_j, Q}^p
\|u^{\frac1p}\|_{p, Q}^p  |Q|
\nonumber \\
&\lesssim ||(u, \vec{v})||_{A,\, \vec{B},\, \vec{p}}^p
\sum_{Q \in \S}\prod_{j=1}^m
\left(\inf_{Q} M_{\bar{B_j}}(f_j v_j^{\frac{1}{p_j}})\right)^p |E_Q|
\nonumber \\
&\le||(u, \vec{v})||_{A,\, \vec{B},\, \vec{p}}^p \prod_{j=1}^m \biggl(
\int_{\Rn} M_{\bar{B_j}}(f_j v_j^{\frac{1}{p_j}})(x)^{p_j} dx\biggr)^{p/{p_j}}
\nonumber \\
&\le ||(u, \vec{v})||_{A,\, \vec{B},\, \vec{p}}^p
\prod_{j=1}^m \|M_{\bar{B_j}}\|_{L^{p_j}(\Rn)}^{p}\|f_j\|_{L^{p_j}(v_j)}^{p},
\end{align}
where Lemma \ref{lem:MBp} is used in the last step.

Next let us deal with the case $2<p<\infty$. By duality, one has
\begin{align}\label{eq:AS-dual}
\|\A_{\S}^2(\vec{f})\|_{L^p(u)}^2 = \|\A_{\S}^2(\vec{f})^2\|_{L^{p/2}(u)}
=\sup_{\substack{0 \le h \in L^{(p/2)'}(u) \\ \|h\|_{L^{(p/2)'}(u)=1}}} \int_{\Rn} \A_{\S}^2(\vec{f})(x)^2 h(x) u(x) dx.
\end{align}
Fix a nonnegative function $h \in L^{(p/2)'}(u)$ with $\|h\|_{L^{(p/2)'}(u)}=1$. Then using H\"{o}lder's inequality \eqref{eq:Holder-AA}  and Lemma \ref{lem:MBp}, we obtain
\begin{align}\label{eq:ASp2}
&\int_{\Rn} \A_{\S}^2(\vec{f})(x)^2 h(x) u(x) dx
\lesssim \sum_{Q \in \S} \prod_{j=1}^m \langle |f_j| \rangle_{Q}^2 \langle hu \rangle_{Q} |Q|
\nonumber \\
&\lesssim \sum_{Q \in \S} \prod_{j=1}^m \|f_j v_j^{\frac{1}{p_j}}\|_{\bar{B_j}, Q}^2 \|v^{-\frac{1}{p_j}}\|_{B_j, Q}^2
\|hu^{1-\frac{2}{p}}\|_{\bar{A}, Q} \|u^{\frac{2}{p}}\|_{A, Q} |Q|
\nonumber \\
&\lesssim ||(u, \vec{v})||_{A,\, \vec{B},\, \vec{p}}^2 \sum_{Q \in \S}  \prod_{j=1}^m \left(\inf_{Q} M_{\bar{B_j}}(f_j v_j^{\frac{1}{p_j}})\right)^2
\left(\inf_{Q} M_{\bar{A}}(hu^{1-\frac{2}{p}})\right) |E_Q|
\nonumber \\
&\le ||(u, \vec{v})||_{A,\, \vec{B},\, \vec{p}}^2 \int_{\Rn} \prod_{j=1}^m  M_{\bar{B_j}}(f_j v_j^{\frac{1}{p_j}})(x)^2 M_{\bar{A}}(hu^{1-\frac{2}{p}})(x) dx
\nonumber \\
&\le ||(u, \vec{v})||_{A,\, \vec{B},\, \vec{p}}^2 \|\prod_{j=1}^m M_{\bar{B_j}}(f_j v_j^{\frac{1}{p_j}})^2\|_{L^{p/2}(\Rn)}
\|M_{\bar{A}}(hu^{1-\frac{2}{p}})\|_{L^{(p/2)'}(\Rn)}
\nonumber \\
&\le ||(u, \vec{v})||_{A,\, \vec{B},\, \vec{p}}^2 \prod_{j=1}^m\| M_{\bar{B_j}}(f_j v_j^{\frac{1}{p_j}})\|^2_{L^{p_j}(\Rn)}
\|M_{\bar{A}}(hu^{1-\frac{2}{p}})\|_{L^{(p/2)'}(\Rn)}
\nonumber \\
&\le ||(u, \vec{v})||_{A,\, \vec{B},\, \vec{p}}^2 \prod_{j=1}^m
\|M_{\bar{B_j}}\|_{\mathcal{L}(L^{p_j}(\Rn))}^2
\|f_j\|_{L^{p_j}(v_j)}^2 \|M_{\bar{A}}\|_{\mathcal{L}(L^{(p/2)'}(\Rn))} \|h\|_{L^{(p/2)'}(u)},
\end{align}
where $$\|M_{\bar{B_j}}\|_{\mathcal{L}(L^{p_j}(\Rn))}= \|M_{\bar{B_j}}\|_{L^{p_j}(\Rn)\to L^{p_j}(\Rn)}$$ and $$\|M_{\bar{A}}\|_{\mathcal{L}(L^{(p/2)'}(\Rn))}= \|M_{\bar{A}}\|_{L^{(p/2)'}(\Rn)\to L^{(p/2)'}(\Rn)}.$$
Therefore, \eqref{eq:ASLp} immediately follows from \eqref{eq:ASp1}, \eqref{eq:AS-dual} and \eqref{eq:ASp2}.
\end{proof}

\begin{proof}[\textbf{Proof of Theorem \ref{thm:FS}.}]
Fix exponents $\frac1p=\frac{1}{p_1}+\cdots+\frac{1}{p_m}$ with $1<p_1,\ldots, p_m<\infty$, and weights $\vec{w}=(w_1, \ldots, w_m)$. Note that $v_i(x):=Mw_i(x) \ge \langle w_i \rangle_{Q}$ for any dyadic cube $Q \in \S$ containing $x$. For each $i$, let $A_i$ be a Young function such that ${\bar{A}}_i \in B_{p_i}$.
By Lemma \ref{lem:MBp}, we have
\begin{align}\label{eq:MAf}
\|M_{\bar{A}_i} (f_i v_i^{\frac{1}{p_i}})\|_{L^{p_i}(\Rn)} \lesssim \|f_i\|_{L^{p_i}(v_i)},\quad i=1,\ldots,m.
\end{align}
Thus, using sparse domination \eqref{eq:S-sparse}, H\"{o}lder's inequality and \eqref{eq:MAf}, we deduce that
\begin{align*}
\|S_{\alpha}(\vec{f})\|_{L^p(\nu_{\vec{w}})}^p
&\lesssim \alpha^{pmn}\sum_{j=1}^{3^n} \sum_{Q \in \S_j}
\prod_{i=1}^m \langle |f_i| \rangle_{Q}^p \nu_{\vec{w}}(Q)
\\
&\le \alpha^{pmn}\sum_{j=1}^{3^n} \sum_{Q \in \S_j}
\prod_{i=1}^m \|f_i v_i^{\frac{1}{p_i}}\|_{\bar{A}_i, Q}^p
\|v_i^{-\frac{1}{p_i}}\|_{A_i, Q}^p \nu_{\vec{w}}(Q)
\\
&\le \alpha^{pmn}\sum_{j=1}^{3^n} \sum_{Q \in \S_j}
\prod_{i=1}^m \|f_i v_i^{\frac{1}{p_i}}\|_{\bar{A}_i, Q}^p
\langle w_i \rangle_Q^{-\frac{p}{p_i}} \langle \nu_{\vec{w}} \rangle_Q |Q|
\\
&\lesssim \alpha^{pmn}\sum_{j=1}^{3^n} \sum_{Q \in \S_j} \prod_{i=1}^m
\Big(\inf_{Q} M_{\bar{A}_i}(f_i v_i^{\frac{1}{p_i}}) \Big)^p |E_Q|
\\
&\lesssim\alpha^{pmn}\int_{\Rn}
\prod_{i=1}^m M_{\bar{A}_i}(f_i v_i^{\frac{1}{p_i}})(x)^p dx
\lesssim\alpha^{pmn}\prod_{i=1}^m \|M_{\bar{A}_i} (f_i v_i^{\frac{1}{p_i}})\|_{L^{p_i}(\Rn)}^p
\\
&\lesssim \alpha^{pmn}\prod_{i=1}^m \|f_i\|_{L^{p_i}(v_i)}^p
= \alpha^{pmn}\prod_{i=1}^m \|f_i\|_{L^{p_i}(Mw_i)}^p.
\end{align*}
This shows \eqref{eq:S-FS}. Likewise, one can obtain \eqref{eq:g-FS}.
\end{proof}

\section{Entropy bumps}\label{sec:entropy}
In this section, we will prove entropy bump inequalities (Theorem \ref{thm:entropy}). By the sparse domination for Littlewood-Paley operators, see \eqref{eq:S-sparse} and \eqref{eq:g-sparse}, it suffices to prove the results  for $\A_{\S}^{r}$, $r\geq 1$.

Let us call $(\alpha_i)=(\alpha_1,\alpha_2,\dots,\alpha_m)$. We will denote $(\alpha_{i})_{i\neq j}=(\alpha_1,\dots,\alpha_{j-1},\alpha_{j+1},\dots, \alpha_m)$. Having that notation at our disposal we define the following sub-multilinear maximal function.
\[
\M^{(\alpha_{i})_{i\neq j}}(\vec{\sigma})(x)
:=\sup_{x\in Q}\prod_{i\in\{1,\dots,m\},i\not=j}\langle\sigma_{i}\rangle_{Q}^{\alpha_{i}}
\]
and given $\vec{p}=(p_{1},\dots,p_{m})$
\[
\mathcal{M}^{\frac{1}{\vec{p}}}(\vec{\sigma})(x)
:=\sup_{x\in Q}\prod_{i=1}^{m}\langle\sigma_{i}\rangle_{Q}^{\frac{1}{p_{i}}}
\]
Let $1<p_{1},\dots,p_{m}<\infty$ and $\frac{1}{p}=\frac{1}{p_{1}}+\dots+\frac{1}{p_{m}}$.
We define
\[
\rho_{\vec{\sigma},\vec{p}}(Q)=
\bigg(\int_{Q}\mathcal{M}^{\frac{p}{\vec{p}}}(\sigma_{i}\chi_{Q})(x)dx\bigg)
\bigg(\int_Q \prod_{i=1}^{m}\sigma_i(x)^{\frac{p}{p_i}}dx \bigg)^{-1}.
\]
In the scalar case we shall denote just
\[
\rho_{\nu}(Q)=\frac{1}{\nu(Q)}\int_{Q}M(\nu\chi_{Q})(x)dx.
\]
Given an increasing function $\varepsilon:[1,+\infty)\rightarrow(0,+\infty)$
let us denote
\begin{equation*}
\rho_{\vec{\sigma},\vec{p},\varepsilon}(Q) :=\rho_{\vec{\sigma},\vec{p}}(Q)\varepsilon(\rho_{\vec{\sigma},\vec{p}}(Q))
\quad\text{and}\quad \rho_{\nu,\varepsilon}(Q) :=\rho_{\nu}(Q)\varepsilon(\rho_{\nu}(Q)).
\end{equation*}

With the notation we have just fixed, we are in the position to introduce the entropy bump conditions. For weights $\vec{\sigma}=(\sigma_1,\ldots,\sigma_m)$ and $\nu$, we define
\begin{equation}\label{eq:EntBump-1-1}
\lfloor \vec{\sigma},\nu\rfloor _{\vec{p},r,\varepsilon}
=\sup_{Q}\bigg(\prod_{i=1}^{m}\langle\sigma_{i}\rangle_{Q}^{\frac{p}{p'_i}}\bigg)
\langle\nu\rangle_{Q}\rho_{\vec{\sigma},\vec{p},\varepsilon}(Q) \rho_{\nu,\varepsilon}  (Q)^{\frac{p}{r}-1}.
\end{equation}
Also, if $\vec{\sigma}=(\sigma_1,\dots,\sigma_{m+1})$, we denote
\[
\left\lfloor \vec{\sigma}\right\rfloor _{\vec{q},\vec{p},\rho,\theta,j}
:=\sup_{Q}\prod_{i=1}^{m+1} \langle\sigma_{i}\rangle_{Q'}^{q_i} \left(\frac{\int_{Q}\M^{(1/(\theta p_{i}))_{i\neq j}}(\vec{\sigma})}{\int_{Q}\prod_{i\not=j}\sigma_{i}^{1/(\theta p_{i})}}\right)^{\theta}
\rho\left(\left(\frac{\int_{Q}\M^{(1/(\theta p_{i}))_{i\neq j}}(\vec{\sigma})}{\int_{Q}\prod_{i\not=j}\sigma_{i}^{1/(\theta p_{i})}}\right)^{\theta}\right).
\]
Denote $\overrightarrow{f\sigma} :=(f_{1}\sigma_{1},\dots,f_{m}\sigma_{m})$. Armed with the notation and the definitions of the entropy bumps just introduced, we can finally state and prove the main theorems of this section.

\begin{theorem}\label{thm:Convex}
Let $\frac{1}{p}=\frac{1}{p_1}+\cdots+\frac{1}{p_m}$ with $p>r$ and $1<p_1,\dots,p_m<\infty$. Let $\sigma_{1},\dots,\sigma_{m}$ and $\nu$ be weights. Assume that $\varepsilon$ is a monotonic increasing function on $(1,\infty)$ satisfying $ \int_{1}^{\infty}\frac{dt}{\varepsilon(t)t}<\infty$. Then
\begin{align}\label{eq:ASp>r}
\|\A_{\S}^{r}(\overrightarrow{f\sigma})\|_{L^{p}(\nu)}
\lesssim\left\lfloor \vec{\sigma},\nu\right\rfloor _{\vec{p},r,\varepsilon}^{\frac{1}{p}}\prod_{i=1}^{m}\|f\|_{L^{p_{i}}(\sigma_{i})}.
\end{align}
\end{theorem}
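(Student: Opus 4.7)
Since $p>r$, the strategy is to dualize the $L^{p}(\nu)$-norm, use Hölder on each sparse cube to peel off the ``geometric'' averages absorbed into the entropy bump, and then handle the leftover Carleson-type sum by a principal-cube stopping time combined with a level-set layering that uses the convergence $\int_{1}^{\infty}\frac{dt}{t\varepsilon(t)}<\infty$.

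First, since $p/r>1$, I would write by duality
\begin{equation*}
\|\A_{\S}^{r}(\overrightarrow{f\sigma})\|_{L^{p}(\nu)}^{r}
= \sup_{h}\sum_{Q\in\S}\prod_{i=1}^{m}\langle f_{i}\sigma_{i}\rangle_{Q}^{r}\int_{Q}h\,\nu\,dx,
\end{equation*}
where the supremum runs over $0\le h$ with $\|h\|_{L^{(p/r)'}(\nu)}=1$. Hölder on each $Q$ yields $\langle f_{i}\sigma_{i}\rangle_{Q}\le\langle f_{i}^{p_{i}}\sigma_{i}\rangle_{Q}^{1/p_{i}}\langle\sigma_{i}\rangle_{Q}^{1/p_{i}'}$ and $\int_{Q}h\nu\le |Q|\langle h^{(p/r)'}\nu\rangle_{Q}^{1-r/p}\langle\nu\rangle_{Q}^{r/p}$. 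Using \eqref{eq:EntBump-1-1} to replace $\bigl(\prod_{i}\langle\sigma_{i}\rangle_{Q}^{p/p_{i}'}\langle\nu\rangle_{Q}\bigr)^{r/p}$ by $\lfloor\vec\sigma,\nu\rfloor_{\vec p,r,\varepsilon}^{r/p}\rho_{\vec\sigma,\vec p,\varepsilon}(Q)^{-r/p}\rho_{\nu,\varepsilon}(Q)^{-(1-r/p)}$, the claim reduces to the multilinear Carleson-type estimate
\begin{equation*}
\sum_{Q\in\S}\frac{\prod_{i=1}^{m}\langle f_{i}^{p_{i}}\sigma_{i}\rangle_{Q}^{r/p_{i}}\,\langle h^{(p/r)'}\nu\rangle_{Q}^{1-r/p}}{\rho_{\vec\sigma,\vec p,\varepsilon}(Q)^{r/p}\,\rho_{\nu,\varepsilon}(Q)^{1-r/p}}\,|Q|
\;\lesssim\;\prod_{i=1}^{m}\|f_{i}\|_{L^{p_{i}}(\sigma_{i})}^{r},
\end{equation*}
where we have normalized $\|h\|_{L^{(p/r)'}(\nu)}=1$.

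To establish this reduced inequality I would run a principal-cube stopping-time argument: for each of the averages $\langle f_{i}^{p_{i}}\sigma_{i}\rangle$ and $\langle h^{(p/r)'}\nu\rangle$ pick maximal cubes where the corresponding average doubles, and let $\mathcal P\subset\S$ be their common refinement. Every $Q\in\S$ then has a minimal principal ancestor $\pi(Q)\in\mathcal P$ on which all the averages in the numerator are comparable, up to a factor of $2$, to those over $Q$; sparseness gives $\sum_{\pi(Q)=P}|Q|\lesssim|P|$, so the sum collapses to one over $\mathcal P$ with essentially constant numerator on each chunk. The main obstacle is then the layered entropy summation: split $\mathcal P=\bigsqcup_{j,k\ge 0}\mathcal P_{j,k}$ according to $2^{j}\le\rho_{\vec\sigma,\vec p}(P)<2^{j+1}$ and $2^{k}\le\rho_{\nu}(P)<2^{k+1}$, and on each layer apply a weighted Carleson embedding against the multilinear maximal $\M^{p/\vec p}(\vec\sigma)$ on the $\vec\sigma$-side and $M\nu$ on the $\nu$-side (using Coifman--Rochberg-type bounds of the form \eqref{eq:CR-Phi} to absorb the maximal functions into $\A_1$-weights). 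The prefactor $\rho_{\vec\sigma,\vec p,\varepsilon}(P)^{-r/p}\rho_{\nu,\varepsilon}(P)^{-(1-r/p)}$ contributes $\varepsilon(2^{j})^{-r/p}\varepsilon(2^{k})^{-(1-r/p)}$ after cancellation with the Carleson outputs $2^{j(r/p)}$ and $2^{k(1-r/p)}$, and the resulting double series converges by Hölder applied to $\sum_{j}\varepsilon(2^{j})^{-1}$ and $\sum_{k}\varepsilon(2^{k})^{-1}$, both finite since $\sum_{j}\varepsilon(2^{j})^{-1}\simeq\int_{1}^{\infty}\frac{dt}{t\varepsilon(t)}<\infty$. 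The genuinely multilinear difficulty lies in the Carleson embedding at fixed layer: because $\M^{p/\vec p}(\vec\sigma)$ does not factor as a product over $i$, the $\sigma_{i}$-averages must be treated jointly rather than variable by variable, which is where the multilinear argument departs from the scalar entropy-bump proofs of Treil--Volberg and Lacey--Spencer.
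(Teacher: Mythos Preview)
Your overall plan---dualize, peel off the entropy-bump constant, then reduce to a Carleson-type sum handled by layering on the values of $\rho_{\vec\sigma,\vec p}$ and $\rho_\nu$---is in the right spirit, but the execution has a genuine gap at the principal-cube step.  After your pointwise H\"older, the entropy quantities $\rho_{\vec\sigma,\vec p,\varepsilon}(Q)$ and $\rho_{\nu,\varepsilon}(Q)$ sit in the denominator and depend on $Q$, not on the principal ancestor $\pi(Q)=P$.  When you invoke sparseness to collapse $\sum_{\pi(Q)=P}|Q|\lesssim|P|$, you freeze the numerator but \emph{lose} the $Q$-dependence of these $\rho$-terms; you then propose to layer $\mathcal P$ by $\rho(P)\simeq 2^{j}$, $2^{k}$, but $\rho(P)$ has no relation to $\rho(Q)$ for the descendants $Q$, so the claimed cancellation of the ``Carleson outputs $2^{j(r/p)}$, $2^{k(1-r/p)}$'' with the $\rho$-prefactors is unjustified.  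Without the $\rho$-denominators you are left with a sum of the form $\sum_{P\in\mathcal P}\prod_i\langle F_i\rangle_P^{r/p_i}\langle G\rangle_P^{1-r/p}|P|$, and the nesting of principal cubes prevents this from being bounded by the desired product of norms (the usual stopping argument only yields control by maximal-function $L^1$-norms, which diverge).

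The paper's route avoids this by never running a stopping time on $f$ or $h$.  Instead, one first stratifies $\S$ into slices $\S_a$ on which the \emph{full} entropy expression $\bigl(\prod_i\langle\sigma_i\rangle_Q^{r/p_i'}\bigr)\langle\nu\rangle_Q^{r/p}\rho_{\vec\sigma,\vec p,\varepsilon}(Q)^{r/p}\rho_{\nu,\varepsilon}(Q)^{1/(p/r)'}\simeq 2^a$, then writes $\langle f_i\sigma_i\rangle_Q=\langle f_i\rangle_Q^{\sigma_i}\langle\sigma_i\rangle_Q$ (an identity, not H\"older) and applies H\"older \emph{to the sum over $Q$} with exponents $p/r$ and $(p/r)'$.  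This cleanly separates the problem into two independent Carleson sums, one of the form $\sum_Q(\prod_i\langle f_i\rangle_Q^{\sigma_i})^p\prod_i\sigma_i(Q)^{p/p_i}/\rho_{\vec\sigma,\vec p,\varepsilon}(Q)$ and one of the form $\sum_Q(\langle g\rangle_Q^{\nu})^{(p/r)'}\nu(Q)/\rho_{\nu,\varepsilon}(Q)$.  Each is then handled by the multilinear Carleson embedding (Lemma~\ref{lem:CarEmb}), and the packing condition \eqref{eq:CarlCond-1} is verified by layering on $\rho_{\vec\sigma,\vec p}(Q)\simeq 2^j$: within each layer one dominates $\prod_i\sigma_i(Q)^{p/p_i}$ by $\int_{E_Q}\M^{p/\vec p}(\vec\sigma\mathbf{1}_R)$, and the definition of $\rho$ then produces exactly the factor $2^j$ that cancels, leaving $\sum_j\varepsilon(2^j)^{-1}<\infty$.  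The point is that the $\rho$-term stays paired with its own side throughout, so nothing is lost in the collapse.
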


\begin{theorem}\label{thm:Concave}
Let $\frac{1}{p}=\frac{1}{p_1}+\cdots+\frac{1}{p_m}$ with $p\leq r$ and $1<p_1,\dots,p_m<\infty$. Let $\sigma_{1},\dots,\sigma_{m}$ and $\nu$ be weights. Assume that $\rho$ is a monotonic increasing function on $(1,\infty)$ satisfying $\int_{1}^{\infty}\frac{dt}{\rho(t)t}<\infty$.
Then
\begin{align}\label{eq:ASp<r}
\|\A_{\S}^{r}(\overrightarrow{f\sigma})\|_{L^{p}(\nu)}
\lesssim \lfloor \vec{\sigma},\nu \rfloor_{\vec{\frac{r}{p'}},\vec{p},\rho,\frac{r}{p},m+1}^{\frac{1}{r}}
\prod_{i=1}^{m}\|f\|_{L^{p_{i}}(\sigma_{i})},
\end{align}
where $\vec{p}=(p_1, \dots, p_m, p')$ and $\frac{r}{\vec{p}'}=\big(\frac{r}{p'_1},\ldots,\frac{r}{p'_m},\frac{r}{p}\big)$.
\end{theorem}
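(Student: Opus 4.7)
By the sparse dominations \eqref{eq:S-sparse}--\eqref{eq:g-sparse} recorded in the proof of Theorem \ref{thm:bump}, together with the opening remark of this section, it suffices to prove \eqref{eq:ASp<r} for the multilinear sparse form $\A_{\S}^{r}(\overrightarrow{f\sigma})$, uniformly in the sparse family $\S$. My approach rests on three ingredients: (i) subadditivity of the $L^{p/r}(\nu)$ quasi-norm, available precisely because $p\le r$; (ii) a two-stage H\"older splitting that reduces the multilinear sum to $m$ scalar Carleson sums; and (iii) a scalar entropy Carleson embedding theorem of Treil--Volberg / Lacey--Spencer type, whose convergence exploits $\int_{1}^{\infty}\tfrac{dt}{\rho(t)\,t}<\infty$.

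Since $p/r\le 1$, the quasi-triangle inequality $(\sum_{k}a_{k})^{p/r}\le\sum_{k}a_{k}^{p/r}$ applied pointwise to $\A_{\S}^{r}(\overrightarrow{f\sigma})^{p}$ and integrated against $\nu$ yields
$$\|\A_{\S}^{r}(\overrightarrow{f\sigma})\|_{L^{p}(\nu)}^{p}\;\le\;\sum_{Q\in\S}\prod_{i=1}^{m}\langle f_{i}\sigma_{i}\rangle_{Q}^{\,p}\,\nu(Q),$$
so it is enough to dominate the right-hand side by $\lfloor\vec\sigma,\nu\rfloor^{\,p/r}\prod_{i}\|f_{i}\|_{L^{p_{i}}(\sigma_{i})}^{\,p}$. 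The first H\"older inequality $\langle f_{i}\sigma_{i}\rangle_{Q}\le \langle f_{i}^{p_{i}}\sigma_{i}\rangle_{Q}^{1/p_{i}}\langle\sigma_{i}\rangle_{Q}^{1/p_{i}'}$ separates the $\sigma_{i}$-weighted average of $f_{i}^{p_{i}}$ from the classical average of $\sigma_{i}$. The second H\"older, applied in the summation index $Q$ with exponents $p_{i}/p$ (which satisfy $\sum p/p_{i}=1$), then splits the sum as
$$\sum_{Q\in\S}\prod_{i=1}^{m}\langle f_{i}\sigma_{i}\rangle_{Q}^{\,p}\,\nu(Q)\;\le\;\prod_{i=1}^{m}\Bigl(\sum_{Q\in\S}a_{Q}\,\langle f_{i}^{p_{i}}\sigma_{i}\rangle_{Q}\Bigr)^{p/p_{i}},\qquad a_{Q}:=\nu(Q)\prod_{j=1}^{m}\langle\sigma_{j}\rangle_{Q}^{p/p_{j}'}.$$

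For each scalar sum I would invoke an entropy Carleson embedding theorem (in the spirit of \cite{OmPR} and the scalar entropy bump literature): once the entropy-weighted Carleson condition supplied by the multilinear bump is verified for $\{a_{Q}\}$, the embedding yields $\sum_{Q}a_{Q}\langle f_{i}^{p_{i}}\sigma_{i}\rangle_{Q}\lesssim \lfloor\vec\sigma,\nu\rfloor^{\,p/r}\,\|f_{i}\|_{L^{p_{i}}(\sigma_{i})}^{p_{i}}$. Multiplying across $i$ and using $\sum p/p_{i}=1$ gives $\lfloor\vec\sigma,\nu\rfloor^{\,p/r}\prod_{i}\|f_{i}\|^{p}$. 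Taking the $p$-th root and inserting this into the first step produces \eqref{eq:ASp<r} with the exponent $1/r$ on $\lfloor\vec\sigma,\nu\rfloor$, as claimed.

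\textbf{Main obstacle.} The delicate point will be passing from the entropy bump---a supremum in $Q$ involving the product $\prod_{i}\langle\sigma_{i}\rangle_{Q}^{r/p_{i}'}\langle\nu\rangle_{Q}^{r/p}$ together with the maximal ratio $\Phi(Q)^{r/p}\rho(\Phi(Q)^{r/p})$ where $\Phi(Q)=\bigl(\int_{Q}\M^{(p/(rp_{i}))_{i=1}^{m}}(\vec\sigma)\bigr)/\bigl(\int_{Q}\prod_{i}\sigma_{i}^{p/(rp_{i})}\bigr)$---to the precise entropy-Carleson condition on $\{a_{Q}\}$ needed for the scalar embedding. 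The natural route is a stopping-time/principal-cubes decomposition keyed to dyadic levels $2^{k}\le\Phi(Q)^{r/p}<2^{k+1}$, where the $k$-th level contributes a factor $\rho(2^{k})^{-1}$; the summability over $k$ is then exactly what the assumption $\int_{1}^{\infty}\tfrac{dt}{\rho(t)\,t}<\infty$ provides. The resulting exponent bookkeeping---matching $\theta=r/p$, $\sum 1/p_{i}=1/p$, and the power $p/p_{j}'$ inside $a_{Q}$ to the power $r/p_{j}'$ inside the bump---is what ultimately yields the final exponent $1/r$, and constitutes the technical heart of the proof.
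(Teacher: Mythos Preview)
Your reduction has a genuine gap at the H\"older-in-$Q$ step (ii): after that step each factor is an $L^{1}$-type embedding, and such embeddings require a pointwise condition that the entropy bump does not supply. Indeed, you need
\[
\sum_{Q\in\S}a_{Q}\,\langle f_{i}^{p_{i}}\sigma_{i}\rangle_{Q}\;\lesssim\;C\,\|f_{i}\|_{L^{p_{i}}(\sigma_{i})}^{p_{i}},
\]
which, setting $g=f_{i}^{p_{i}}\sigma_{i}\ge 0$, is $\sum_{Q}a_{Q}\langle g\rangle_{Q}\le C\|g\|_{L^{1}}$. By duality this is equivalent to the \emph{pointwise} bound $\sum_{Q\in\S,\,Q\ni x}a_{Q}/|Q|\le C$ for a.e.~$x$. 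Take the unweighted case $\sigma_{1}=\dots=\sigma_{m}=\nu\equiv 1$: then the entropy ratio is identically $1$, so $\lfloor\vec\sigma,\nu\rfloor=\rho(1)<\infty$, while $a_{Q}=|Q|$ and you would need $\#\{Q\in\S:Q\ni x\}<\infty$ uniformly --- false for any sparse family with an infinite nested chain. Equivalently, with $f_{i}=\mathbf{1}_{Q_{0}}$ your $i$-th scalar sum is $\sum_{Q\in\S}|Q\cap Q_{0}|=+\infty$ as soon as $\S$ contains infinitely many ancestors of $Q_{0}$, although the \emph{original} multilinear sum $\sum_{Q}\prod_{i}\langle\mathbf{1}_{Q_{0}}\rangle_{Q}^{p}\,|Q|\lesssim|Q_{0}|$ is finite (for $Q\supset Q_{0}$ the product supplies the decay $|Q_{0}|^{mp}|Q|^{1-mp}$, summable since $mp>1$). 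Splitting into entropy levels $2^{k}$ cannot rescue this: in the example all cubes sit on the same level, and in general the level does not cap the length of nested chains.

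The paper's argument never decouples the $f_{i}$ through H\"older in $Q$. It keeps the $L^{p/r}(\nu)$-norm intact, runs stopping times on $\lambda_{i,Q}=(\langle f_{i}\rangle_{Q}^{\sigma_{i}})^{r}$ \emph{first}, and only then applies subadditivity across the principal cubes (Lemma~\ref{lem:Concave}). The inner sum over each stopping cell is then handled by Lemma~\ref{lem:key}, whose core is the Cascante--Ortega--Verbitsky inequality (Lemma~\ref{lem:cov}): after monotonicity in $\alpha$ one may take the outer exponent $>1$, and COV turns the $L^{s}$-norm into a \emph{tail} sum $\sum_{Q'\subset Q}(\cdot)$ rather than the nested-chain sum $\sum_{Q'\ni x}(\cdot)$ your reduction produces. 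That tail sum is exactly what Lemma~\ref{lem:sum<1} and the level decomposition by $\Phi(Q)^{r/p}\simeq 2^{k}$ can control, with $\sum_{k}\rho(2^{k})^{-1}<\infty$ supplying the convergence. If you want to salvage a H\"older-based route, the splitting must occur \emph{after} freezing $\langle f_{i}\rangle_{Q}^{\sigma_{i}}$ via stopping cubes, not across raw cubes.
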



\subsection{Proof of Theorem \ref{thm:Convex}}
We need a multilinear version of Carleson embedding theorem from \cite{CD}.
\begin{lemma}\label{lem:CarEmb}
Let $\vec{\sigma}=(\sigma_1, \ldots, \sigma_m)$ be weights. Let  $1<p_{i}<\infty$ and $p\in(1,\infty)$ satisfying $\frac{1}{p}=\frac{1}{p_{1}}+\dots+\frac{1}{p_{m}}$. Assume that  $\{a_Q\}_{Q \in \D}$ is a sequence of non-negative numbers for which the following condition holds
\begin{equation}\label{eq:CarlCond-1}
\sum_{Q'\subset Q}a_{Q'} \leq A\int_{Q}\prod_{i=1}^{m}\sigma_{i}^{\frac{p}{p_{i}}}dx,\quad \forall Q\in\D.
\end{equation}
Then for all $f_{i}\in L^{p_{i}}(\sigma_{i})$,
\begin{align}\label{Carleson-1}
\bigg(\sum_{Q \in \D}a_{Q}\Big(\prod_{i=1}^{m}\fint_{Q} f_i\, d\sigma_i\Big)^{p}\bigg)^{\frac1p}
\leq A\prod_{i=1}^m p'_i \|f_i\|_{L^{p_i}(\sigma_i)}.
\end{align}
\end{lemma}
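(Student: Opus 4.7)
The plan is to reduce the multilinear Carleson sum to the $L^p$-integral of a product of weighted dyadic maximal functions, and then handle that product with H\"older's inequality. The pivotal observation is that for the weighted averages $F_i(Q):=\frac{1}{\sigma_i(Q)}\int_Q f_i\, d\sigma_i$, one has $F_i(Q)\le M^{\sigma_i}_{\D}f_i(x)$ for every $x\in Q$, where $M^{\sigma_i}_{\D}$ denotes the dyadic maximal function on $(\Rn,\sigma_i\,dx)$. Taking a product and then an infimum over $x\in Q$ gives
\[
\prod_{i=1}^m F_i(Q)^p\;\le\;\inf_{x\in Q} G(x),\qquad G(x):=\prod_{i=1}^m \bigl(M^{\sigma_i}_{\D}f_i(x)\bigr)^p.
\]
So the task reduces to the scalar Carleson-type bound
\[
\sum_{Q\in\D} a_Q\,\inf_{x\in Q}G(x)\;\le\; A\int_{\Rn} G(x)\, d\mu(x),\qquad d\mu:=\prod_{i=1}^m\sigma_i^{p/p_i}\, dx.
\]

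For this step I would use layer cake to write $\sum_Q a_Q \inf_Q G=\int_0^\infty\sum_{Q:\,\inf_Q G>t}a_Q\, dt$. Since $\inf_Q G>t$ if and only if $Q\subset\{G>t\}$, choosing the maximal dyadic cubes $\{Q^t_j\}_j$ contained in $\{G>t\}$ gives a disjoint decomposition of $\{Q\in\D:Q\subset\{G>t\}\}$ as $\bigsqcup_j\{Q\in\D:Q\subset Q^t_j\}$. Applying the hypothesis \eqref{eq:CarlCond-1} to each $Q^t_j$ and summing over $j$ yields
\[
\sum_{Q:\,\inf_Q G>t}a_Q\;=\;\sum_j\sum_{Q\subset Q^t_j}a_Q\;\le\;A\sum_j\mu(Q^t_j)\;\le\;A\,\mu(\{G>t\}),
\]
and integration in $t$ produces $\sum_Q a_Q\inf_Q G\le A\int G\, d\mu$. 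Since $\sum_{i=1}^m p/p_i=1$, H\"older's inequality with exponents $p_i/p$ gives
\[
\int_{\Rn}G\, d\mu\;=\;\int_{\Rn}\prod_{i=1}^m\bigl(M^{\sigma_i}_{\D}f_i\bigr)^p\sigma_i^{p/p_i}\, dx\;\le\;\prod_{i=1}^m\bigl\|M^{\sigma_i}_{\D}f_i\bigr\|_{L^{p_i}(\sigma_i)}^p,
\]
and the sharp weighted dyadic maximal (Doob) inequality $\|M^{\sigma_i}_{\D}f_i\|_{L^{p_i}(\sigma_i)}\le p_i'\|f_i\|_{L^{p_i}(\sigma_i)}$ closes the estimate with the advertised constant.

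The main obstacle, such as it is, is aligning the packing measure on the right-hand side of the Carleson hypothesis with the measure dictated by H\"older's inequality. The exponent $p/p_i$ in the density $d\mu=\prod_i\sigma_i^{p/p_i}\, dx$ of \eqref{eq:CarlCond-1} is precisely what makes $\{p_i/p\}_{i=1}^m$ a family of H\"older exponents summing to one in the conjugate sense, so that $\int G\, d\mu$ splits cleanly into the product of $L^{p_i}(\sigma_i)$-norms of $M^{\sigma_i}_{\D}f_i$. Once this alignment is recognised, the two stages fit together with no loss and the sharp multilinear constant $A\prod_i p_i'$ emerges automatically.
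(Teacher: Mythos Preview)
Your proof is correct. The paper does not supply its own argument for this lemma; it simply cites Chen--Dami\'an \cite{CD}, and your approach---dominate each weighted average by the dyadic $\sigma_i$-maximal function, run the scalar Carleson/layer-cake bound against the measure $d\mu=\prod_i\sigma_i^{p/p_i}\,dx$, then split the resulting integral with H\"older and apply Doob---is exactly the standard one used there. One small remark: your chain of inequalities actually produces the constant $A^{1/p}\prod_{i=1}^m p_i'$ after taking the $p$-th root, which for $A\ge 1$ is stronger than the stated $A\prod_{i=1}^m p_i'$; the paper's formulation is just not recording the sharpest dependence on $A$. (Also, to make the maximal-cube step airtight, reduce first by monotone convergence to finitely many nonzero $a_Q$, so that the sets $\{G>t\}$ meet only finitely many relevant cubes and maximal cubes certainly exist.)
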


With this result in hand, we are in the position to settle Theorem \ref{thm:Convex} following ideas in \cite{LS}.
\begin{proof}[Proof of Theorem \ref{thm:Convex}]
First we split the sparse family as follows. We say that $Q\in\mathcal{S}_a$ if and only if
\[\bigg(\prod_{i=1}^{m}\langle\sigma_{i}\rangle_{Q}^{\frac{r}{p'_{i}}}\bigg)\langle w\rangle_{Q}^{\frac{r}{p}}\rho_{\vec{\sigma},\vec{p},\varepsilon}(Q)^{\frac{r}{p}}\rho_{\nu,\varepsilon}(Q)^{1/(p/r)'}\simeq 2^a\]
Let us begin providing a suitable estimate for each of those pieces of the sparse family. Given a weight $\gamma$ let us denote $\langle h\rangle_Q^\gamma:=\frac{1}{\gamma(Q)}\int_Q|h(x)|\gamma(x)dx$. Assume that $g\in L^{(p/r)'}(\nu)$. By duality we can write
\begin{align*}
& \bigg\langle \sum_{Q\in \S_a} \Big(\prod_{i=1}^{m}\langle f_{i}\sigma_{i}\rangle_{Q}\Big)^{r} {\bf 1}_{Q},g\nu\bigg\rangle
\\
& =\sum_{Q\in\S_a} \Big(\prod_{i=1}^{m}\langle f_i\rangle_{Q}^{\sigma_{i}}\Big)^{r} \Big(\prod_{i=1}^{m}\langle\sigma_{i}\rangle_{Q}\Big)^r \langle g\rangle_{Q}^{\nu}\langle\nu\rangle_{Q}\lvert Q\rvert
\\
&=\sum_{Q\in\S_a} \Big(\prod_{i=1}^{m}\langle f_i \rangle_{Q}^{\sigma_i} \Big)^r
\Big(\prod_{i=1}^{m}\langle\sigma_i \rangle_{Q}^{\frac{r}{p_i}}\Big)
\Big\{\Big(\prod_{i=1}^{m}\langle\sigma_i \rangle_{Q}^{\frac{r}{p'_i}} \Big)
\langle w \rangle_{Q}^{\frac{r}{p}}\Big\} \langle\nu\rangle_{Q}^{1/(p/r)'}\langle g\rangle_{Q}^{\nu}\cdot\lvert Q\rvert
\\
&=\sum_{Q\in\S_a} \Big(\prod_{i=1}^{m}\langle f_i \rangle_{Q}^{\sigma_i}\Big)^r
\frac{\prod_{i=1}^{m}\langle\sigma_{i}\rangle_{Q}^{\frac{r}{p_{i}}}}{\rho_{\vec{\sigma},\vec{p},\varepsilon}(Q)^{\frac{r}{p}}} \frac{\langle\nu\rangle_{Q}^{1/(p/r)'}}{\rho_{\nu,\varepsilon}(Q)^{1/(p/r)'}}\langle g\rangle_{Q}^{\nu}\cdot\lvert Q\rvert
\\
&\qquad\quad\times \Bigl\{\bigg(\prod_{i=1}^{m}\langle\sigma_{i}\rangle_{Q}^{\frac{r}{p'_{i}}}\bigg)\langle w\rangle_{Q}^{\frac{r}{p}}\rho_{\vec{\sigma},\vec{p},\varepsilon}(Q)^{\frac{r}{p}}\rho_{\nu,\varepsilon}(Q)^{1/(p/r)'}\Bigr\}
\\
& \lesssim2^a \sum_{Q\in\S_a} \Big(\prod_{i=1}^{m}\langle f_{i}\rangle_{Q}^{\sigma_{i}}\Big)^r \frac{\prod_{i=1}^{m}\sigma_{i}(Q)^{\frac{r}{p_{i}}}}{\rho_{\vec{\sigma},\vec{p},\varepsilon}(Q)}\frac{\nu(Q)^{1/(p/r)'}}{\rho_{\nu,\varepsilon}(Q)^{1/(p/r)'}}\langle g\rangle_{Q}^{\nu}
\\
& \leq 2^a \bigg(\sum_{Q\in\S_a} \Big(\prod_{i=1}^{m}\langle f_{i}\rangle_{Q}^{\sigma_{i}}\Big)^{p}\frac{\prod_{i=1}^{m}\sigma_{i}(Q)^{\frac{p}{p_{i}}}}{\rho_{\vec{\sigma},\vec{p},\varepsilon}(Q)}\bigg)^{\frac{r}{p}}\bigg(\sum_{Q\in\S_a} (\langle g\rangle_{Q}^{\nu})^{(p/r)'}\frac{\nu(Q)}{\rho_{\nu,\varepsilon}(Q)}\bigg)^{\frac{1}{(p/r)'}}.
\end{align*}
For the second term, we would like to get that
\begin{align}\label{eq:QSa-1}
\sum_{Q\in \S_a}(\langle g\rangle_{Q}^{\nu})^{(p/r)'}\frac{\nu(Q)}{\rho_{\nu,\varepsilon}(Q)}
\lesssim \|g\|_{L^{(p/r)'}(\nu)}^{(p/r)'}.
\end{align}
We omit the proof of \eqref{eq:QSa-1} and focus on the first term above, since the argument that we are going provide, essentially contains the linear case. For the first term, it needs to show
\begin{align}\label{eq:QSa-2}
\sum_{Q\in\S_a} \Big(\prod_{i=1}^{m}\langle f_{i}\rangle_{Q}^{\sigma_{i}}\Big)^{p}
\frac{\prod_{i=1}^{m}\sigma_{i}(Q)^{\frac{p}{p_{i}}}}{\rho_{\vec{\sigma},\vec{p},\varepsilon}(Q)}
\lesssim\prod_{i=1}^{m}\|f_i\|_{L^{p_i}(\sigma_i)}^p.
\end{align}
Taking into account Lemma \ref{lem:CarEmb}, it suffices to verify that \eqref{eq:CarlCond-1} holds with
\[
a_{Q}=\begin{cases}
\frac{\prod_{i=1}^{m}\sigma_{i}(Q)^{\frac{p}{p_{i}}}}{\rho_{\vec{\sigma},\vec{p},\varepsilon}(Q)} & Q\in\S_a,\\
0 & \text{otherwise}.
\end{cases}
\]
Indeed, let us call $\S_a(R)$ the set of cubes of $\S_a$ that are contained in $R\in\D$. Then
\begin{align*}
\sum_{Q\in\S_a(R)} &\frac{\prod_{i=1}^{m}\sigma_{i}(Q)^{\frac{p}{p_{i}}}}{\rho_{\vec{\sigma},\vec{p},\varepsilon}(Q)}
\lesssim\sum_{j=1}^{\infty}\sum_{\substack{\rho_{\vec{\sigma},\vec{p}}(Q)\sim2^{j} \\ Q\in\S_a(R)}}
\frac{\prod_{i=1}^{m}\sigma_{i}(Q)^{\frac{p}{p_{i}}}}{\rho_{\vec{\sigma},\vec{p},\varepsilon}(Q)}
\\
& \lesssim \sum_{j=1}^{\infty}\sum_{{\substack{\text{maximal }Q\in\S_a(R)\\ \rho_{\vec{\sigma},\vec{p}}(Q)\simeq2^j}}} \sum_{\substack{P \subset Q \\ P \in\S_a(R)}} \frac{\prod_{i=1}^{m}\sigma_{i}(P)^{\frac{p}{p_{i}}}}{\rho_{\vec{\sigma},\vec{p},\varepsilon}(P)}
\\
& \leq\sum_{j=1}^{\infty}\sum_{\substack{\text{maximal } Q \in\S_a(R) \\ \rho_{\vec{\sigma},\vec{p}}(Q)\simeq2^j}}\sum_{\substack{P\subset Q\\ P \in \S_a(R)}} \frac{2^{-j}}{\varepsilon(2^j)} \int_{E_P} \M^{\frac{p}{\vec{p}}}(\sigma_{i}{\bf 1}_{Q})(x) dx
\\
& \lesssim\sum_{j=1}^{\infty}\sum_{\substack{\text{maximal } Q \in\S_a(R)\\ \rho_{\vec{\sigma},\vec{p}}(Q)\simeq2^j}} \frac{2^{-j}}{\varepsilon(2^j)} \int_{Q}\M^{\frac{p}{\vec{p}}}(\sigma_{i}{\bf 1}_{Q})(x)  dx
\\
&\lesssim\Big(\prod_{i=1}^{m}\sigma_{i}^{\frac{p}{p_{i}}}\Big)(R) \sum_{j=0}^{\infty}\frac{1}{\varepsilon(2^{j})}
\lesssim\Big(\prod_{i=1}^{m}\sigma_{i}^{\frac{p}{p_{i}}}\Big)(R) \int_{1}^{\infty}\frac{dt}{t\varepsilon(t)}.
\end{align*}
This provides the desired bound.

Collecting \eqref{eq:QSa-1} and \eqref{eq:QSa-2}, we have shown that
\[
\bigg\langle \sum_{Q\in\S_a}\Big(\prod_{i=1}^{m}\langle f_{i}\sigma_{i}\rangle_{Q}\Big)^r {\bf 1}_{Q},g\nu\bigg\rangle \lesssim 2^a \prod_{i=1}^{m}\|f_i\|_{L^{p_i}(\sigma_i)}^r \cdot \|g\|_{L^{(p/r)'}(\nu)}.
\]
Since for the largest $a$ for which $\S_a$ is not empty we have that $\lfloor \sigma,w\rfloor _{p,r,\varepsilon}^{\frac{r}{p}}\simeq2^a$,  summing in $a$ yields
\[
\bigg\langle \sum_{Q\in\S} \Big(\prod_{i=1}^{m}\langle f_{i}\sigma_{i}\rangle_{Q}\Big)^r {\bf 1}_Q, g\nu\bigg\rangle \lesssim \lfloor \vec{\sigma},\nu \rfloor _{\vec{p}, r, \varepsilon}^{\frac{r}{p}} \prod_{i=1}^{m}\|f_i\|_{L^p(\sigma_i)}^r \|g\|_{L^{(p/r)'}(w)}.
\]
Consequently,
\[
\|\A_{\S}^r(\overrightarrow{f\sigma})\|_{L^p(\nu)}
\lesssim \lfloor \vec{\sigma},\nu\rfloor _{\vec{p}, r, \varepsilon}^{\frac{1}{p}}
\prod_{i=1}^{m}\|f_{i}\|_{L^{p}(\sigma_{i})}.
\]
This shows Theorem \ref{thm:Convex}.
\end{proof}

\subsection{Proof of Theorem \ref{thm:Concave}}

To settle Theorem \ref{thm:Concave} we are going to follow the scheme in \cite{ZK}. First we borrow a result from \cite{COV}.
\begin{lemma}
\label{lem:cov} For every $1<s<\infty$ we have that for every positive locally finite measure $\sigma$ on $\Rn$ and any positive numbers
$\lambda_{Q}$, $Q\in\mathcal{D}$, we have
\[
\int_{\Rn} \Big(\sum_{Q\in\D}\frac{\lambda_{Q}}{\sigma(Q)}{\bf 1}_{Q}(x)\Big)^{s}d\sigma(x)
\lesssim_{s}\sum_{Q\in\mathcal{D}}\lambda_{Q}\Big(\sigma(Q)^{-1}\sum_{Q'\subseteq Q}\lambda_{Q'}\Big)^{s-1}.
\]
\end{lemma}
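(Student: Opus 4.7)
The plan is to combine a pointwise summation-by-parts identity with a convexity step: Jensen's inequality closes the range $1<s\le 2$ directly, and an induction on the integer interval containing $s$ carries the result to all $s>1$. Write $F(x):=\sum_{Q\in\D}\lambda_Q\sigma(Q)^{-1}\mathbf{1}_Q(x)$ and, for fixed $x\in\Rn$, order the cubes in $\D$ that contain $x$ and carry nonzero $\lambda_Q$ from largest to smallest as $Q_1(x)\supsetneq Q_2(x)\supsetneq\cdots$. Set $a_k:=\lambda_{Q_k(x)}/\sigma(Q_k(x))$ and the inward tails $T_k(x):=\sum_{j\ge k}a_j$. Since $s>1$, the mean-value estimate $T_k^s-T_{k+1}^s\le s a_k T_k^{s-1}$ telescopes to yield the pointwise bound
\begin{equation*}
F(x)^s\le s\sum_{Q\ni x}\frac{\lambda_Q}{\sigma(Q)}\,F_Q(x)^{s-1},\qquad F_Q(x):=\sum_{\substack{Q'\subseteq Q\\ Q'\ni x}}\frac{\lambda_{Q'}}{\sigma(Q')},
\end{equation*}
and integrating in $\sigma$ and applying Fubini produces the master inequality
\begin{equation}\label{eq:plancov}
\int_{\Rn}F^s\,d\sigma\le s\sum_{Q\in\D}\frac{\lambda_Q}{\sigma(Q)}\int_Q F_Q^{\,s-1}\,d\sigma.
\end{equation}

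For $1<s\le 2$ the function $t\mapsto t^{s-1}$ is concave. Combined with the identity $\int_Q F_Q\,d\sigma=\sum_{Q'\subseteq Q}\lambda_{Q'}$, Jensen's inequality gives $\sigma(Q)^{-1}\int_Q F_Q^{\,s-1}\,d\sigma\le G_Q^{\,s-1}$, where $G_Q:=\sigma(Q)^{-1}\sum_{Q'\subseteq Q}\lambda_{Q'}$. Substituting into \eqref{eq:plancov} yields exactly $s\sum_Q\lambda_Q G_Q^{\,s-1}$, the desired inequality with $C_s=s$.

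When $s>2$ Jensen runs the wrong way, and the plan is to bootstrap by induction on $\lfloor s\rfloor$. Since $F_Q$ is of the same form as $F$ but built from the subfamily $\{Q'\subseteq Q\}$, applying the lemma to $F_Q$ with exponent $s-1\in(1,s)$ (which falls in an already-established range) gives $\int_Q F_Q^{\,s-1}\,d\sigma\le C_{s-1}\sum_{Q'\subseteq Q}\lambda_{Q'}G_{Q'}^{\,s-2}$. Plugging into \eqref{eq:plancov} and swapping the order of summation reduces the problem to controlling
\begin{equation*}
\sum_{Q'\in\D}\lambda_{Q'}G_{Q'}^{\,s-2}\sum_{Q\supseteq Q'}\frac{\lambda_Q}{\sigma(Q)}
\end{equation*}
by a constant multiple of $\sum_{Q'}\lambda_{Q'}G_{Q'}^{\,s-1}$; the inner sum along the chain above $Q'$ would then be estimated by a second summation-by-parts together with the elementary bound $\lambda_Q\le\sigma(Q)G_Q$, so that the additional factor of order $G_{Q'}$ is produced.

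The main obstacle is precisely this last step: the sum $\sum_{Q\supseteq Q'}\lambda_Q/\sigma(Q)$ admits no universal pointwise upper bound (it can blow up when $\sigma$ has point masses), so the exponents and the order of summation must be balanced carefully in order that each inductive pass reproduces the combinatorial quantity $G_{Q'}^{\,s-1}$ on the right-hand side rather than an iterated Carleson-type average. This delicate matching is the reason the estimate appears as a distinct lemma quoted from \cite{COV} rather than being reproved in situ.
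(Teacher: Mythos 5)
The paper does not prove this lemma; it is imported verbatim from \cite{COV} and used as a black box, so your attempt must be judged on its own terms rather than against an in--paper argument.

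Your argument for $1<s\le 2$ is correct and complete. The summation--by--parts along the chain of cubes containing $x$ (with tails $T_k$, using $T_k^s-T_{k+1}^s\le s\,a_k T_k^{s-1}$, which is just convexity of $t\mapsto t^s$) gives the pointwise bound $F(x)^s\le s\sum_{Q\ni x}\frac{\lambda_Q}{\sigma(Q)}F_Q(x)^{s-1}$, integration and Tonelli give your master inequality, the identity $\int_Q F_Q\,d\sigma=\sum_{Q'\subseteq Q}\lambda_{Q'}$ is exact, and Jensen applies since $t\mapsto t^{s-1}$ is concave precisely when $s\le 2$. This yields the claimed bound with constant $s$. (One should, as usual, run the argument first for finitely supported $(\lambda_Q)$ and pass to the limit by monotone convergence, since otherwise there need not be a largest cube $Q_1(x)$ carrying nonzero $\lambda$; this is a routine fix.)

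For $s>2$ there is a genuine gap, and you have correctly located it, but it is worth being explicit that the termwise comparison you would need is simply false. After plugging the inductive hypothesis into the master inequality and swapping sums, you arrive at the need for
\begin{equation*}
\sum_{Q\supseteq Q'}\frac{\lambda_Q}{\sigma(Q)}\;\lesssim\; G_{Q'}=\frac{1}{\sigma(Q')}\sum_{Q''\subseteq Q'}\lambda_{Q''},
\end{equation*}
and the two sides live on opposite sides of $Q'$: the left collects data from cubes containing $Q'$, the right from cubes inside $Q'$. Take a single chain $Q_0\supsetneq Q_1\supsetneq\cdots\supsetneq Q_N$ with $\lambda_{Q_k}\equiv1$ and $\sigma$ a unit point mass inside $Q_N$; then for $Q'=Q_k$ the left side equals $k+1$ while $G_{Q_k}=N-k+1$, and for $k$ near $N$ the inequality fails arbitrarily badly. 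Interestingly, in this same example the \emph{aggregate} double sum $\sum_{Q'}\lambda_{Q'}G_{Q'}^{\,s-2}\sum_{Q\supseteq Q'}\lambda_Q/\sigma(Q)$ is still comparable to the target $\sum_{Q'}\lambda_{Q'}G_{Q'}^{\,s-1}$, so the inductive scheme is not obviously doomed — but it cannot be closed by the termwise bound you propose, and you give no alternative mechanism. As written, then, the proof covers only $1<s\le2$; the full range $1<s<\infty$, which the paper actually uses (the lemma is invoked with $s=1/\alpha$ for small $\alpha$ in Lemma~\ref{lem:key}), requires the argument of \cite{COV} or an equivalent one-pass proof that you have not supplied.
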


Given a sparse family $\S$ contained in a dyadic grid $\D$, for every $Q\in\S$ we will denote $\S(Q)$ the family of cubes of $\S$ that are contained in $Q$.

\begin{lemma}\label{lem:sum<1}
Let $\beta_{1},\dots,\beta_{m}\ge 0$ be such that $\beta:=\sum_{i=1}^{m}\beta_{i}<1$. Let $\S \subset \D$ be a sparse family.  Then for every cube $Q \in \S$ and all functions $w_{1},\dots,w_{m}$,
\[
\sum_{Q' \in\S(Q)} |Q'| \prod_{i=1}^{m} \langle w_i\rangle_{Q'}^{\beta_i}
\lesssim |Q| \prod_{i=1}^{m}  \langle w_i\rangle_{Q}^{\beta_i} .
\]
\end{lemma}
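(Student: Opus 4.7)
The plan is to dominate the left-hand side by an integral of a product of dyadic maximal functions and then close with a generalized Hölder inequality and Kolmogorov's inequality. The condition $\beta<1$ enters exactly in the Kolmogorov step, where the weak-type $(1,1)$ information about $w_i$ is converted into an $L^{\beta}$-estimate for the maximal function.

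First, using the pairwise disjoint sets $E_{Q'} \subseteq Q'$ with $|E_{Q'}| \ge \eta |Q'|$ guaranteed by sparsity, I would write
\[
\sum_{Q' \in \S(Q)} |Q'| \prod_{i=1}^m \langle w_i \rangle_{Q'}^{\beta_i}
\le \eta^{-1} \int_Q \sum_{Q' \in \S(Q)} \mathbf{1}_{E_{Q'}}(x) \prod_{i=1}^m \langle w_i \rangle_{Q'}^{\beta_i}\, dx.
\]
Since the sets $E_{Q'}$ are pairwise disjoint, for each $x \in Q$ at most one term in the inner sum contributes; and if $x \in E_{Q'} \subseteq Q'$ then $\langle w_i \rangle_{Q'} \le M^{Q}_{\D}(w_i)(x)$, where $M^{Q}_{\D}$ denotes the dyadic maximal operator taking the supremum over cubes of $\D$ contained in $Q$. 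Therefore
\[
\sum_{Q' \in \S(Q)} |Q'| \prod_{i=1}^m \langle w_i \rangle_{Q'}^{\beta_i}
\lesssim \int_Q \prod_{i=1}^m M^{Q}_{\D}(w_i)(x)^{\beta_i}\, dx.
\]

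Next, I apply Hölder's inequality with exponents $p_i = \beta/\beta_i$ for those indices with $\beta_i > 0$ (factors with $\beta_i=0$ equal $1$ and may be dropped); since $\sum_i \beta_i/\beta = 1$, this yields
\[
\int_Q \prod_{i=1}^m M^{Q}_{\D}(w_i)^{\beta_i}\, dx
\le \prod_{i=1}^m \Bigl( \int_Q M^{Q}_{\D}(w_i)^{\beta}\, dx \Bigr)^{\beta_i/\beta}.
\]
Now the hypothesis $\beta<1$ is used: by Kolmogorov's inequality combined with the weak-type $(1,1)$ bound for the dyadic maximal operator (which has constant $1$ via stopping cubes),
\[
\int_Q M^{Q}_{\D}(w_i)^{\beta}\, dx \le \frac{1}{1-\beta}\, |Q|^{1-\beta} w_i(Q)^{\beta}.
\]
Multiplying these bounds and using $\sum_i \beta_i = \beta$ (so that $\prod_i w_i(Q)^{\beta_i} = |Q|^{\beta}\prod_i \langle w_i\rangle_{Q}^{\beta_i}$) produces
\[
\int_Q \prod_{i=1}^m M^{Q}_{\D}(w_i)^{\beta_i}\, dx
\le C_{\beta}\, |Q|^{1-\beta} \prod_{i=1}^m w_i(Q)^{\beta_i}
= C_{\beta}\, |Q| \prod_{i=1}^m \langle w_i \rangle_{Q}^{\beta_i},
\]
which gives the conclusion.

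The only book-keeping concern is the case in which some $\beta_i$ vanish, but those factors are simply $1$ and may be dropped from the Hölder step. I do not anticipate any serious obstacle: all steps are standard tools in the sparse-operator toolbox, and the resulting implicit constant depends only on $\beta$ and on the sparsity constant $\eta$, in particular it is independent of $m$ and of the weights $w_i$.
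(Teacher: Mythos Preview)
The paper states this lemma without proof (the whole subsection follows the scheme of \cite{ZK}, and this particular estimate is taken as known). Your argument is correct and is in fact the standard one: using sparsity to dominate the sum by $\int_Q \prod_i M^{Q}_{\D}(w_i)^{\beta_i}\,dx$, then applying H\"older with exponents $\beta/\beta_i$ and Kolmogorov's inequality with the weak $(1,1)$ bound for the localized dyadic maximal operator is exactly how this lemma is proved in the literature, and your constant tracking (yielding $C_\beta \simeq (1-\beta)^{-1}$) is accurate.
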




\begin{lemma}
\label{lem:key} Let $j\in\{1,\dots,m\}$, $s_{1},\dots,s_{m}\in\mathbb{R}$ with $s_{i}>0$
for each $i\in\{1,\dots,m\}$ with $i\neq j$, and $q_{1},\dots,q_{m}>0$ with $q_{j}=1+s_{j}$ be
such that
\[
\sum_{i}s_{i}\leq\sum_{i}q_{i},\quad\frac{\sum_{i}s_{i}}{\sum_{i}q_{i}}<\min_{i\neq j}\frac{s_{i}}{q_{i}}.
\]
Let $\mathcal{S}$ be a sparse family such that for every $Q\in\mathcal{S}$
\begin{equation}
2^{r}\leq\bigg(\frac{\int_{Q}\mathcal{M}^{(1/(\theta p_{i}))_{i\neq j}}(\vec{w})}{\int_{Q}\prod_{i\not=j}w_{i}^{1/(\theta p_{i})}}\bigg)^{\theta}\leq2^{r+1}\label{eq:PropS}
\end{equation}
where $p_{i}\in(0,+\infty)$. Then for every $0<\alpha<\infty$ we
have that
\[
\mathscr{A} :=\int_{\Rn} \Big(\sum_{Q\in\mathcal{S}}\prod_{i=1}^{m} \langle w_i \rangle_{Q}^{s_{i}\alpha}{\bf 1}_{Q}\Big)^{\frac{1}{\alpha}}dw_{j}\lesssim_{\vec{s},\vec{q},\alpha}\frac{\left\lfloor \vec{w}\right\rfloor _{\vec{q},\vec{p},\rho,\theta,j}}{2^{r}\rho(2^{r})}\sum_{Q\in\mathcal{S}}|Q|\prod_{i\neq j} \langle w_i \rangle_{Q}^{s_{i}-q_{i}}.
\]
\end{lemma}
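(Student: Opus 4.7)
My plan is to reduce both $\alpha \ge 1$ and $0 < \alpha < 1$ to a common master inequality
\[
\sum_{Q \in \mathcal{S}} w_j(Q) \prod_{i=1}^m \langle w_i \rangle_Q^{s_i}
\lesssim \frac{\left\lfloor \vec{w}\right\rfloor_{\vec{q},\vec{p},\rho,\theta,j}}{2^r \rho(2^r)} \sum_{Q \in \mathcal{S}} |Q| \prod_{i \neq j} \langle w_i \rangle_Q^{s_i - q_i},
\]
which I would establish termwise: using $w_j(Q) = |Q|\langle w_j \rangle_Q$ together with the crucial identity $q_j = 1 + s_j$, one factors
\[
w_j(Q) \prod_{i=1}^m \langle w_i \rangle_Q^{s_i} = |Q| \prod_{i \neq j} \langle w_i \rangle_Q^{s_i - q_i} \cdot \prod_{i=1}^m \langle w_i \rangle_Q^{q_i},
\]
and then bounds $\prod_i \langle w_i \rangle_Q^{q_i} \lesssim \left\lfloor \vec{w}\right\rfloor_{\vec{q},\vec{p},\rho,\theta,j}/(2^r \rho(2^r))$ via the definition of the entropy bump combined with the selection condition \eqref{eq:PropS}.

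For $\alpha \ge 1$, the power $1/\alpha \le 1$ is concave, so the elementary inequality $(\sum a_k)^{1/\alpha} \le \sum a_k^{1/\alpha}$ applied termwise to the sum inside $\mathscr{A}$ gives at once
\[
\mathscr{A} \le \sum_{Q \in \mathcal{S}} w_j(Q) \prod_{i=1}^m \langle w_i \rangle_Q^{s_i},
\]
and the master inequality finishes the job. For $0 < \alpha < 1$, I set $s = 1/\alpha > 1$ and apply Lemma~\ref{lem:cov} with $\sigma = w_j$ and $\lambda_Q = w_j(Q) \prod_i \langle w_i \rangle_Q^{s_i \alpha}$, producing
\[
\mathscr{A} \lesssim \sum_{Q \in \mathcal{S}} \lambda_Q \bigg( \frac{1}{w_j(Q)} \sum_{\substack{Q' \subseteq Q \\ Q' \in \mathcal{S}}} \lambda_{Q'} \bigg)^{s-1}.
\]
Once the internal Carleson-type estimate $w_j(Q)^{-1} \sum_{Q' \in \mathcal{S}(Q)} \lambda_{Q'} \lesssim \prod_i \langle w_i \rangle_Q^{s_i \alpha}$ is in hand, substitution and the identity $s_i \alpha \cdot s = s_i$ collapse the bound back to $\sum_{Q \in \mathcal{S}} w_j(Q) \prod_i \langle w_i \rangle_Q^{s_i}$, at which point the master inequality applies once more.

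The principal obstacle is the internal bound in the $\alpha < 1$ case. Writing $\lambda_{Q'} = |Q'| \langle w_j \rangle_{Q'}^{1 + s_j \alpha} \prod_{i \neq j} \langle w_i \rangle_{Q'}^{s_i \alpha}$, the naive sum of exponents is $1 + \alpha \sum_i s_i$, which need not be less than $1$, so Lemma~\ref{lem:sum<1} is not directly applicable. The structural hypotheses $\sum_i s_i \le \sum_i q_i$ and $\frac{\sum_i s_i}{\sum_i q_i} < \min_{i \neq j} \frac{s_i}{q_i}$ are designed precisely so that the exponents can be split as $s_i \alpha = \gamma_i + \delta_i$ with $\sum_i \gamma_i < 1$ (to which Lemma~\ref{lem:sum<1} applies across the sparse family) while the residual $\delta$-part assembles into the product $\prod_i \langle w_i \rangle_Q^{q_i}$ demanded by the entropy bump structure via \eqref{eq:PropS}. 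Carrying out this splitting while tracking the dependence of the implicit constants on $\vec{s}, \vec{q}, \alpha$ is the delicate combinatorial step.
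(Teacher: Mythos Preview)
Your outline for $\alpha\ge 1$ and the termwise ``master inequality'' are both correct, and the use of Lemma~\ref{lem:cov} to handle $0<\alpha<1$ is exactly the right opening move. The gap is in the internal Carleson bound and in the way you propose to combine it with the master inequality.

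You want to prove
\[
\frac{1}{w_j(Q)}\sum_{Q'\in\mathcal{S}(Q)}\lambda_{Q'}
\lesssim \prod_{i=1}^m\langle w_i\rangle_Q^{s_i\alpha}
\qquad\text{with no entropy factor in front,}
\]
so that after raising to the power $s-1=\tfrac{1}{\alpha}-1$ you land exactly on $\sum_Q w_j(Q)\prod_i\langle w_i\rangle_Q^{s_i}$ and then invoke the master inequality once. But this clean internal bound is not available: the exponents in $\lambda_{Q'}=|Q'|\prod_i\langle w_i\rangle_{Q'}^{\alpha s_i+\delta_{ij}}$ sum to $1+\alpha\sum_i s_i$, which in the intended applications is strictly larger than $1$, so Lemma~\ref{lem:sum<1} cannot be applied directly. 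The only way to lower the exponents is to peel off a piece $\epsilon q_i$ from each and absorb $\prod_i\langle w_i\rangle_{Q'}^{\epsilon q_i}$ into the entropy bump via~\eqref{eq:PropS}. Doing so produces the internal bound with an unavoidable prefactor $\big(\lfloor\vec w\rfloor_{\vec q,\vec p,\rho,\theta,j}/(2^r\rho(2^r))\big)^{\epsilon}$. After raising to $s-1$ this becomes a prefactor to the power $\epsilon(s-1)$, and the outer exponents become $s_i-\epsilon q_i(s-1)$, \emph{not} $s_i$. If you now apply your master inequality verbatim you extract one more full power of the entropy factor, ending with total exponent $1+\epsilon(s-1)>1$, which is the wrong dependence.

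The paper's proof resolves this by distributing the single allowed power of $\lfloor\vec w\rfloor/(2^r\rho(2^r))$ between the inner and the outer step: it extracts $\epsilon(s-1)$ from the inner sum and only the remaining $1-\epsilon(s-1)$ from the outer (termwise) step, with $\epsilon$ chosen so that simultaneously $\epsilon(s-1)\le 1$, each reduced exponent $\alpha s_i+\delta_{ij}-\epsilon q_i$ is nonnegative, and their sum is strictly below $1$. The two hypotheses $\sum_i s_i\le\sum_i q_i$ and $\tfrac{\sum_i s_i}{\sum_i q_i}<\min_{i\ne j}\tfrac{s_i}{q_i}$ are precisely what make such an $\epsilon$ exist. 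So your splitting idea is right, but the bookkeeping must track a \emph{partial} use of the entropy bump at each stage rather than a full use twice; the ``master inequality'' should be replaced by its variant with exponents $s_i-\epsilon q_i(s-1)$, which yields exactly the complementary power $1-\epsilon(s-1)$.
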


\begin{proof}
Then the left-hand side of the conclusion is monotonically decreasing in $\alpha$ and the right-hand side does not depend on $\alpha$, so it suffices to consider small $\alpha$, in particular we may assume $s_{i}\alpha+\delta_{ij}>0$ and $\alpha<1$.

It follows from the hypothesis that for sufficiently small $\alpha$ there exists an $\epsilon$ such that
\[
\frac{\alpha\sum_{i}s_{i}}{\sum_{i}q_{i}}<\epsilon
\leq \min\bigg\{\frac{1}{1/\alpha-1},\, \min_{i}\frac{\alpha s_{i}+\delta_{ij}}{q_{i}}\bigg\}.
\]
By the assumption $\alpha<1$ and Lemma~\ref{lem:cov},
\[
\mathscr{A} \simeq\sum_{Q\in\mathcal{S}}|Q|\prod_{i=1}^{m} \langle w_i \rangle_{Q}^{\alpha s_{i}+\delta_{ij}}\Big(\sum_{Q'\in\mathcal{S}(Q)} \frac{|Q'|}{w_{j}(Q)}\prod_{i=1}^{m} \langle w_i \rangle_{Q'}^{\alpha s_{i}+\delta_{ij}}\Big)^{\frac{1}{\alpha}-1}.
\]
Taking into account the definition of $\left\lfloor \vec{w}\right\rfloor _{\vec{q},\rho,\theta,j}$ and \eqref{eq:PropS}, we get
\[
\mathscr{A} \lesssim
\bigg(\frac{\left\lfloor \vec{w}\right\rfloor _{\vec{q},\vec{p},\rho,\theta,j}}{2^{r}\rho(2^{r})}\bigg)^{\epsilon(\frac{1}{\alpha}-1)}\sum_{Q\in\mathcal{S}}|Q|\prod_{i=1}^{m} \langle w_i \rangle_{Q}^{\alpha s_{i}+\delta_{ij}}\Big(\sum_{Q'\in\mathcal{S}(Q)} \frac{|Q'|}{w_{j}(Q)}\prod_{i=1}^{m} \langle w_i \rangle_{Q'}^{\alpha s_{i}+\delta_{ij}-\epsilon q_{i}}\Big)^{\frac{1}{\alpha}-1}.
\]
Observe that $\alpha s_{i}+\delta_{ij}-\epsilon q_{i}\geq0$ and $\sum_{i}(\alpha s_{i}+\delta_{ij}-\epsilon q_{i})<1$. Hence, Lemma~\ref{lem:sum<1} implies that
\begin{align*}
\mathscr{A} & \lesssim\bigg(\frac{\left\lfloor \vec{w}\right\rfloor _{\vec{q},\vec{p},\rho,\theta,j}}{2^{r}\rho(2^{r})}\bigg)^{\epsilon(\frac{1}{\alpha}-1)}\sum_{Q\in\mathcal{S}}|Q|\prod_{i=1}^{m} \langle w_i \rangle_{Q}^{\alpha s_{i}+\delta_{ij}}\Big(\frac{|Q|}{w_{j}(Q)}\prod_{i=1}^{m} \langle w_i \rangle_{Q}^{\alpha s_{i}+\delta_{ij}-\epsilon q_{i}}\Big)^{\frac{1}{\alpha}-1}
\\
& =\left(\frac{\left\lfloor \vec{w}\right\rfloor _{\vec{q},\vec{p},\rho,\theta,j}}{2^{r}\rho(2^{r})}\right)^{\epsilon(\frac{1}{\alpha}-1)}\sum_{Q\in\mathcal{S}}|Q|\prod_{i=1}^{m} \langle w_i \rangle_{Q}^{\delta_{ij}+s_{i}-\epsilon q_{i}(\frac{1}{\alpha}-1)}.
\end{align*}
By construction $1-\epsilon(1/\alpha-1)\geq0$, and again by the definition of $\left\lfloor \vec{w}\right\rfloor _{\vec{q},\rho,\theta,j}$ and \eqref{eq:PropS}, we conclude that
\[
\mathscr{A} \leq \frac{\left\lfloor \vec{w}\right\rfloor _{\vec{q},\vec{p},\rho,\theta,j}}{2^{r}\rho(2^{r})}
\sum_{Q}|Q|\prod_{i=1}^{m} \langle w_i \rangle_{Q}^{\delta_{ij}+s_{i}-q_{i}}.
\]
and we are done.
\end{proof}

Now we present a stopping time condition. Let $\mathcal{S}\subset\mathcal{D}$
be a finite sparse family let $\lambda_{i}:\mathcal{S}\to[0,\infty)$,
$Q\mapsto\lambda_{i,Q}$ be a function that takes a cube to a non-negative
real number. Then we have that $\mathcal{F}_{i}$ is the minimal family
of cubes such that the maximal members of $\mathcal{S}$ are contained
in $\mathcal{F}_{i}$, and if $F\in\mathcal{F}_{i}$, then every maximal
subcube $F'\subset F$ with $\lambda_{i,F'}\geq2\lambda_{i,F}$ is
also a member of $\mathcal{F}_{i}$.

For each cube $Q$ let $\pi_{i}(Q)$ (the parent of $Q$ in the stopping
family $\mathcal{F}_{i}$) be the smallest cube with $Q\subseteq\pi_{i}(Q)\in\mathcal{F}_{i}$.
We write $\sum_{F_{1},\dots,F_{m}}$ for the sum running over $F_{i}\in\mathcal{F}_{i}$.
We also write
\[
M\lambda_{i}(x):=\sup_{x\in Q\in\mathcal{D}}\lambda_{i,Q}.
\]

\begin{lemma}\label{lem:char}
Let $m\geq2$, $0<p_{1},\dots,p_{m-1}<\infty$. Define $\alpha:=\sum_{i=1}^{m-1}1/p_{i}$ and
\[
0<q_{i}:=s_{i}-\begin{cases}
1/p_{i}, & i<m,\\
1-\alpha, & i=m.
\end{cases}
\]
Assume that $\mathcal{S}$ is a sparse family such that for every $Q\in\S$,
\begin{equation}\label{eq:CondSparse}
2^{r}\leq\left(\frac{\int_{Q}\mathcal{M}^{(1/(\alpha p_{i}))_{i\neq m}}(\vec{w})}{\int_{Q}\prod_{i=1}^{m-1}w_{i}^{1/(\alpha p_{i})}}\right)^{\alpha}\leq2^{r+1}.
\end{equation}
Then one has
\begin{align*}
\mathscr{B} \lesssim [\vec{w}]_{\vec{q},\vec{p},\rho,\alpha,m} \rho(2^r)^{-1} \prod_{i=1}^{m-1}\|M\lambda_i\|_{L^{p_i}(w_i)},
\end{align*}
where
\begin{align*}
\mathscr{B} := \bigg(\sum_{F_{1},\dots,F_{m-1}}\prod_{i=1}^{m-1}\lambda_{i,F_{i}}^{\frac{1}{\alpha}}
\int \Big(\sum_{Q:\pi_{i}(Q)=F_{i}}\sum {\bf 1}_{Q}\prod_{i=1}^{m} \langle w_i \rangle_{Q}^{s_{i}-\delta_{im}}\Big)^{\frac{1}{\alpha}}dw_{m}\bigg)^{\alpha}.
\end{align*}
\end{lemma}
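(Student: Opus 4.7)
The plan is to carry out the proof in three stages: first reduce the $dw_m$-integration by applying Lemma~\ref{lem:key}, then interchange the outer sum over tuples $(F_1,\ldots,F_{m-1})$ with the sum over cubes $Q\in\mathcal{S}$ and exploit the stopping-family structure, and finally convert the remaining sum into $\prod_{i<m}\|M\lambda_i\|_{L^{p_i}(w_i)}$ via the multilinear Carleson embedding (Lemma~\ref{lem:CarEmb}).

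For the first step, fix a tuple $F=(F_1,\ldots,F_{m-1})$ and restrict attention to the subfamily $\mathcal{S}(F):=\{Q\in\mathcal{S}:\pi_i(Q)=F_i \text{ for all } i<m\}$. The inner integral $\int(\sum_{Q\in\mathcal{S}(F)}{\bf 1}_Q\prod_i\langle w_i\rangle_Q^{s_i-\delta_{im}})^{1/\alpha}\,dw_m$ has precisely the form of $\mathscr{A}$ in Lemma~\ref{lem:key} with $j=m$, free parameter equal to $\alpha$, and $\tilde s_i:=(s_i-\delta_{im})/\alpha$; positivity $\tilde s_i>0$ for $i<m$ follows from $q_i=s_i-1/p_i>0$, while the forced identity $\tilde q_m=1+\tilde s_m$ is equivalent to $q_m=s_m-(1-\alpha)$ up to the scaling $\tilde q_m=q_m/\alpha$. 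Selecting $\tilde q_i=q_i/\alpha$ for $i<m$ and checking the sum and ratio conditions of Lemma~\ref{lem:key} termwise, the sparsity hypothesis \eqref{eq:CondSparse} then delivers
\[
\int(\cdots)^{1/\alpha}\,dw_m \lesssim \frac{\lfloor\vec w\rfloor_{\vec q,\vec p,\rho,\alpha,m}^{1/\alpha}}{(2^r\rho(2^r))^{1/\alpha}} \sum_{Q\in\mathcal{S}(F)}|Q|\prod_{i<m}\langle w_i\rangle_Q^{1/(\alpha p_i)},
\]
since $\tilde s_i-\tilde q_i=1/(\alpha p_i)$ for $i<m$.

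Substituting back into $\mathscr{B}$ and noting that each $Q\in\mathcal{S}$ corresponds to a unique tuple $(\pi_1(Q),\ldots,\pi_{m-1}(Q))$, interchange of summation yields
\[
\mathscr{B}\lesssim \frac{\lfloor\vec w\rfloor_{\vec q,\vec p,\rho,\alpha,m}}{2^r\rho(2^r)}\Bigl(\sum_{Q\in\mathcal{S}}|Q|\prod_{i<m}\langle w_i\rangle_Q^{1/(\alpha p_i)}\lambda_{i,\pi_i(Q)}^{1/\alpha}\Bigr)^{\alpha}.
\]
Since $\pi_i(Q)\supset Q$, the construction of the stopping family gives $\lambda_{i,\pi_i(Q)}\leq \inf_{x\in Q}M\lambda_i(x)$, which can be inserted in each factor. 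The proof is then completed by Lemma~\ref{lem:CarEmb} applied with exponents $p_i$ (and $p=1/\alpha$), weights $\sigma_i=w_i$, functions $f_i=M\lambda_i$, and Carleson coefficients $a_Q=|Q|\prod\langle w_i\rangle_Q^{1/(\alpha p_i)}$: the Carleson hypothesis $\sum_{Q'\subset Q}a_{Q'}\lesssim\int_Q\prod w_i^{1/(\alpha p_i)}\,dx$ is verified using the sparsity of $\mathcal{S}$ combined with the Coifman--Rochberg-type estimate \eqref{eq:CR-Phi} applied to the maximal functions $Mw_i$. This yields the announced bound $\mathscr{B}\lesssim\lfloor\vec w\rfloor_{\vec q,\vec p,\rho,\alpha,m}\rho(2^r)^{-1}\prod_{i<m}\|M\lambda_i\|_{L^{p_i}(w_i)}$.

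The main obstacle will be establishing the Carleson condition with the precise integrand $w_i^{1/(\alpha p_i)}$ rather than the $(Mw_i)^{1/(\alpha p_i)}$ that the naive sparsity argument produces; bridging this gap requires the reverse-H\"older/Coifman--Rochberg inequality for small fractional powers of maximal functions together with the $A_1$-type behavior of $(Mw_i)^{\delta}$ for $\delta\in(0,1)$. A secondary subtlety is the range of $\alpha$: when $\alpha\ge 1$ the Carleson embedding in the form of Lemma~\ref{lem:CarEmb} is not directly available and one has to argue either by duality or by a suitable decomposition of the sum. Finally, the compatibility of the choice $\tilde q_i=q_i/\alpha$ with both the positivity requirement and the ratio condition in Lemma~\ref{lem:key} is a routine but careful computation, done by substituting the defining relations $q_i=s_i-1/p_i$ and $q_m=s_m-(1-\alpha)$ and verifying the inequalities termwise.
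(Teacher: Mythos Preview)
Your first step—applying Lemma~\ref{lem:key} with $\tilde s_i=(s_i-\delta_{im})/\alpha$ and $\tilde q_i=q_i/\alpha$—is exactly what the paper does, and the computation $\tilde s_i-\tilde q_i=1/(\alpha p_i)$ for $i<m$ is correct. After that point, however, your route diverges from the paper and runs into two real problems.

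First, the Carleson condition you propose cannot be obtained from Coifman--Rochberg. Sparsity gives only $\sum_{Q'\subset Q_0}a_{Q'}\lesssim\int_{Q_0}\mathcal{M}^{(1/(\alpha p_i))_{i<m}}(\vec w\,\mathbf 1_{Q_0})$, and there is no general inequality $\int_{Q_0}(Mw)^{\delta}\lesssim\int_{Q_0}w^{\delta}$; the $A_1$ property of $(Mw)^{\delta}$ goes in the wrong direction for that purpose. The only way to pass from the maximal-function integral to $\int_{Q_0}\prod_i w_i^{1/(\alpha p_i)}$ is to invoke the hypothesis~\eqref{eq:CondSparse}, which bounds exactly that ratio by $2^{(r+1)/\alpha}$ on cubes of $\mathcal S$. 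You use \eqref{eq:CondSparse} inside Lemma~\ref{lem:key} but not here, and it is here that it is needed a second time.

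Second, even granting the Carleson condition with constant $\simeq 2^{r/\alpha}$, Lemma~\ref{lem:CarEmb} requires $p=1/\alpha\in(1,\infty)$ and $1<p_i<\infty$, whereas Lemma~\ref{lem:char} assumes only $0<p_i<\infty$ and places no upper bound on $\alpha$. In the application (Lemma~\ref{lem:Concave}) one has $\alpha\ge 1$, so the embedding is unavailable precisely where it is needed. Your suggestion of duality or decomposition does not supply a concrete argument.

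The paper avoids both issues. After~\eqref{eq:firstStep} it keeps the outer sum over tuples $(F_1,\dots,F_{m-1})$ and uses sparsity to bound $\sum_{Q:\pi_i(Q)=F_i}|Q|\prod_{i<m}\langle w_i\rangle_Q^{1/(\alpha p_i)}$ by $\int_{F_1\cap\cdots\cap F_{m-1}}\mathcal{M}^{(1/(\alpha p_i))_{i<m}}(\vec w)$; then \eqref{eq:CondSparse} converts this to $2^{r/\alpha}\int_{F_1\cap\cdots\cap F_{m-1}}\prod_{i<m}w_i^{1/(\alpha p_i)}$, which cancels a factor $2^{-r}$ coming from Lemma~\ref{lem:key}. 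The sum over tuples is then rewritten via Fubini as $\int\prod_{i<m}\bigl(\sum_{F_i}\mathbf 1_{F_i}\lambda_{i,F_i}^{1/\alpha}\bigr)w_i^{1/(\alpha p_i)}$, and H\"older with exponents $\alpha p_i$ separates the factors. The key structural fact is that the stopping construction makes $\sum_{F_i}\mathbf 1_{F_i}\lambda_{i,F_i}^{1/\alpha}$ a geometrically increasing series at each point, hence comparable to $(M\lambda_i)^{1/\alpha}$. This argument works for all $\alpha>0$ and all $p_i>0$ without any Carleson embedding.
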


\begin{proof}
We will estimate $\mathscr{B}$ by means of Lemma \ref{lem:key} with $\tilde{s}_i=(s_i-\delta_{im})/\alpha$, $i\leq m$, and $\tilde{q}_{i}=q_{i}/\alpha$.
We can provide such an estimate since
\[
\sum_{i\leq m}\alpha\tilde{q}_{i}=\sum_{i\leq m}s_{i}-(1-\alpha)-\sum_{i<m}1/p_{i}=\sum_{i\leq m}s_{i}-1=\sum_{i\leq m}\alpha\tilde{s}_{i}.
\]
This yields that the first inequality in the hypothesis of the lemma holds, and for $i<m$ we have $\tilde{q}_i<\tilde{s}_i$, verifying the second inequality. Then, there holds
\begin{align}\label{eq:firstStep}
\mathscr{B} \lesssim \Big(\sum_{F_{1},\dots,F_{m-1}}\prod_{i=1}^{m-1}\lambda_{i,F_{i}}^{1/\alpha}\frac{[\vec{w}]_{\vec{q},\vec{p,}\rho,\alpha,m}}{2^{r}\rho(2^{r})}\sum_{Q:\pi_{i}(Q)=F_{i}}|Q|\prod_{i=1}^{m-1} \langle w_i \rangle_{Q}^{\tilde{s}_{i}-\tilde{q}_{i}}\Big)^{\alpha}.
\end{align}
The sparseness of $\mathcal{S}$ gives that
\begin{align*}
&\sum_{Q:\pi_i(Q)=F_i} |Q|\prod_{i=1}^{m-1}(w_{i})_{Q}^{\tilde{s}_{i}-\tilde{q}_{i}}
\lesssim\sum_{Q:\pi_{i}(Q)=F_{i}}|E_{Q}|\bigg(\prod_{i=1}^{m-1} \langle w_i \rangle_{Q}^{\frac{1}{p_i}}\bigg)^{\frac{1}{\alpha}}
\\
&\leq\sum_{Q:\pi_{i}(Q)=F_{i}}\int_{E_{Q}}\mathcal{M}^{(1/(\alpha p_{i}))_{i\neq m}}(\vec{w})
\leq \int_{F_{1}\cap\dotsb\cap F_{m-1}}\mathcal{M}^{(1/(\alpha p_{i}))_{i\neq m}}(\vec{w}).
\end{align*}
Thus, it follows from \eqref{eq:CondSparse} and H\"older inequality that
\begin{align*}
\mathscr{B} &\lesssim \frac{[\vec{w}]_{\vec{q},\vec{p,}\rho,\alpha,m}}{2^{j}\rho(2^{j})} \bigg(\sum_{F_{1},\dots,F_{m-1}}\prod_{i=1}^{m-1}\lambda_{i,F_{i}}^{\frac{1}{\alpha}} 2^{\frac{j}{\alpha}}\int_{F_{1}\cap\dots\cap F_{m-1}}\prod_{i=1}^{m-1}w_{i}^{\frac{1}{\alpha p_i}} \bigg)^{\alpha}
\\
& =\frac{[\vec{w}]_{\vec{q},\vec{p},\rho,\alpha,m}}{2^{j}\rho(2^{j})}2^{j}\bigg(\int\prod_{i=1}^{m-1}\sum_{F_{i}}1_{F_{i}}\lambda_{i,F_{i}}^{\frac{1}{\alpha}} w_{i}^{\frac{1}{\alpha p_i}} \bigg)^{\alpha}
\\
& \leq\frac{[\vec{w}]_{\vec{q},\vec{p},\rho,\alpha,m}}{\rho(2^{j})}\prod_{i=1}^{m-1}\bigg(\int\bigl(\sum_{F_{i}}1_{F_{i}}\lambda_{i,F_{i}}^{\frac{1}{\alpha}} \bigr)^{\alpha p_{i}}w_{i}\bigg)^{\frac{1}{p_i}}.
\end{align*}
We end the proof noticing that
\[
\bigg(\sum_{F_i}1_{F_i} \lambda_{i, F_i}^{\frac{1}{\alpha}}\bigg)^{\alpha} \simeq M\lambda_i,
\]
since at each point, the sum on the left-hand side is geometrically
increasing and, consequently, it is comparable to the last term.
\end{proof}

\begin{lemma}\label{lem:Concave}
Let $m\geq2$ and $0<p_{i},s_{i}<\infty$, $1\leq i<m$, and let $\alpha:=\sum_{i=1}^{m-1}1/p_{i}$. Suppose $q_{i}:=s_{i}-1/p_{i}>0$ for $i<m$ and let $q_{m}:=\alpha$. Then for every sparse family $\S$ and $\alpha\geq1$,
\begin{equation}\label{eq:Concave}
\bigg\|\sum_{Q\in\S} \prod_{i=1}^{m-1}\lambda_{i,Q} \langle w_i \rangle_{Q}^{s_{i}}{\bf 1}_{Q}\bigg\|_{L^{1/\alpha}(w_{m})}
\lesssim [\vec{w}]_{\vec{q},\vec{p},\rho,\alpha,m}\prod_{i=1}^{m-1}\|M\lambda_{i}\|_{L^{p_{i}}(w_{i})},
\end{equation}
provided that $\int_{1}^{\infty}\rho(t)\frac{dt}{t}<\infty$.
\end{lemma}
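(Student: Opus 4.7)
The plan is to combine a stopping-time construction for the coefficients $\lambda_i$ with a dyadic decomposition of $\S$ tailored to the entropy quantity, so that each level satisfies the hypothesis of Lemma~\ref{lem:char} and contributes a controllable piece to the final estimate.

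First, I decompose $\S = \bigsqcup_{r\ge 0}\S_r$ with
\[
\S_r := \bigg\{Q\in\S \ :\ 2^r \le \bigg(\frac{\int_Q \M^{(1/(\alpha p_i))_{i\neq m}}(\vec w)}{\int_Q \prod_{i<m} w_i^{1/(\alpha p_i)}}\bigg)^{\alpha} < 2^{r+1}\bigg\},
\]
so that each $\S_r$ satisfies condition~\eqref{eq:CondSparse}. Because $1/\alpha\le 1$, the $L^{1/\alpha}(w_m)$ quasi-norm obeys $\|\sum_r f_r\|_{L^{1/\alpha}(w_m)}^{1/\alpha}\le\sum_r\|f_r\|_{L^{1/\alpha}(w_m)}^{1/\alpha}$, which reduces the estimate to one for each $\S_r$ followed by a dyadic summation.

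Next, for fixed $r$, I introduce the stopping families $\mathcal F_i\subset\D$ associated with $\lambda_{i,\cdot}$ for $1\le i<m$, as defined just before Lemma~\ref{lem:char}. For $Q\in\S_r$ with $\pi_i(Q)=F_i$ for every $i<m$, the stopping property yields $\lambda_{i,Q}\le 2\lambda_{i,F_i}$. Using this bound together with a further application of $(1/\alpha)$-subadditivity to separate contributions of distinct stopping parents, I arrive at
\begin{align*}
\int \bigg(\!\sum_{Q\in\S_r}\prod_{i<m}\lambda_{i,Q}\langle w_i\rangle_Q^{s_i}\mathbf{1}_Q\!\bigg)^{\!1/\alpha}\!\!dw_m
\lesssim \!\!\sum_{F_1,\dots,F_{m-1}}\!\prod_{i<m}\!\lambda_{i,F_i}^{1/\alpha}\!\int\!\!\bigg(\!\!\sum_{\substack{Q\in\S_r\\ \pi_i(Q)=F_i}}\prod_{i<m}\langle w_i\rangle_Q^{s_i}\mathbf{1}_Q\!\!\bigg)^{\!1/\alpha}\!\!dw_m.
\end{align*}
The right-hand side, raised to the $\alpha$-th power, is precisely $\mathscr{B}_r$ in the notation of Lemma~\ref{lem:char} applied to $\S_r$, since $q_m=\alpha$ together with the convention of that lemma forces $s_m=1$, so the $w_m$-factor $\langle w_m\rangle_Q^{s_m-1}$ trivializes. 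One then verifies that the algebraic hypotheses of Lemma~\ref{lem:char} are met with the choices $q_i=s_i-1/p_i$ ($i<m$) and $q_m=\alpha$.

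Finally, Lemma~\ref{lem:char} gives
\[
\mathscr{B}_r \lesssim \lfloor\vec w\rfloor_{\vec q,\vec p,\rho,\alpha,m}\,\rho(2^r)^{-1}\prod_{i<m}\|M\lambda_i\|_{L^{p_i}(w_i)},
\]
so raising to $1/\alpha$, summing over $r$, and raising back to the $\alpha$-th power yields \eqref{eq:Concave}. The main obstacle is reconciling the powers of $\rho$: the naive summation produces $\sum_r \rho(2^r)^{-1/\alpha}$, which for $\alpha>1$ is strictly stronger than the summability of $\rho(2^r)^{-1}$ that follows from $\int_1^\infty \frac{dt}{\rho(t)t}<\infty$. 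The standard remedy is to apply Lemma~\ref{lem:char} with $\rho$ replaced by $\rho^\alpha$—absorbing the extra factor of $\alpha$ into the entropy bump constant $\lfloor\vec w\rfloor_{\vec q,\vec p,\rho,\alpha,m}$—so that after taking the $1/\alpha$-th power the dyadic sum over $r$ converges under the stated hypothesis on $\rho$.
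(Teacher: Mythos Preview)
Your approach is essentially identical to the paper's: split $\S$ into level sets $\S_r$ of the entropy ratio, introduce the stopping families $\mathcal F_i$ for the $\lambda_i$, use subadditivity of $x\mapsto x^{1/\alpha}$ to arrive at the quantity $\mathscr B$ of Lemma~\ref{lem:char} (taking $s_m=1$, which is exactly what $q_m=\alpha$ forces), and then sum over the levels. The verification of the algebraic hypotheses of Lemma~\ref{lem:char} is the same as in the paper.

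The only divergence is in the final summation, and here your proposed fix does not work. The paper simply writes $\big\|\sum_r g_r\big\|_{L^{1/\alpha}}\lesssim\sum_r\|g_r\|_{L^{1/\alpha}}$ and then sums $\rho(2^r)^{-1}$ directly. You instead use the honest inequality $\big\|\sum_r g_r\big\|_{L^{1/\alpha}}^{1/\alpha}\le\sum_r\|g_r\|_{L^{1/\alpha}}^{1/\alpha}$, which forces you to control $\sum_r\rho(2^r)^{-1/\alpha}$, and you propose to cure this by invoking Lemma~\ref{lem:char} with $\rho^\alpha$ in place of $\rho$. That substitution, however, changes the output constant to $\lfloor\vec w\rfloor_{\vec q,\vec p,\rho^{\alpha},\alpha,m}$, and by the very definition of the entropy bump (which \emph{multiplies} by $\rho$ of the entropy ratio) one has, for increasing $\rho$ and $\alpha>1$, that $\lfloor\vec w\rfloor_{\vec q,\vec p,\rho^{\alpha},\alpha,m}\ge\lfloor\vec w\rfloor_{\vec q,\vec p,\rho,\alpha,m}$ with no reverse inequality available. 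So nothing can be ``absorbed'': your argument proves a weaker estimate, with $\rho^\alpha$ in the bump constant, not the inequality~\eqref{eq:Concave} as stated.
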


\begin{proof}
First we split $\mathcal{S}$ as follows
\[
\sum_{Q\in\mathcal{S}}\prod_{i=1}^{m-1}\lambda_{i,Q} \langle w_i \rangle_{Q}^{s_{i}} {\bf 1}_{Q}=\sum_{j}\sum_{Q\in\mathcal{S}_{j}}\prod_{i=1}^{m-1}\lambda_{i,Q} \langle w_i \rangle_{Q}^{s_{i}} {\bf 1}_{Q}
\]
where
\[
Q\in\mathcal{S}_{j} \iff 2^{j} \leq\bigg(\frac{\int_{Q}\mathcal{M}^{(1/(\alpha p_{i}))_{i\neq m}}(\vec{w})}{\int_{Q}\prod_{i=1}^{m-1}w_{i}^{1/(\alpha p_{i})}}\bigg)^{\alpha}\leq2^{j+1}.
\]
Then one has
\begin{equation}\label{eq:lamlam}
\bigg\|\sum_{Q\in S}\prod_{i=1}^{m-1}\lambda_{i,Q} \langle w_i \rangle_{Q}^{s_{i}} {\bf 1}_{Q}\bigg\|_{L^{1/\alpha}(w_{m})}
\lesssim \sum_{j} \bigg\|\sum_{Q\in\mathcal{S}_{j}}\prod_{i=1}^{m-1}\lambda_{i,Q} \langle w_i \rangle_{Q}^{s_{i}} {\bf 1}_{Q}\bigg\|_{L^{1/\alpha}(w_{m})}.
\end{equation}
 The right-hand side of \eqref{eq:lamlam} can be estimated by
\[
\bigg(\int\Big(\sum_{F_{1},\dots,F_{m-1}}\prod_{i=1}^{m-1}\lambda_{i,F_{i}}\sum_{Q:\pi_{i}(Q)=F_{i}}\prod_{i=1}^{m-1} \langle w_i \rangle_{Q}^{s_i} {\bf 1}_{Q}\Big)^{1/\alpha}dw_{m}\bigg)^{\alpha}.
\]
By subadditivity of the function $x\mapsto x^{1/\alpha}$, this is
bounded by
\[
\bigg(\sum_{F_{1},\dots,F_{m-1}}\prod_{i=1}^{m-1}\lambda_{i,F_{i}}^{1/\alpha}\int\Big(\sum_{Q:\pi_{i}(Q)=F_{i}}\prod_{i=1}^{m-1} \langle w_i \rangle_{Q}^{s_i} {\bf 1}_{Q}\Big)^{1/\alpha}dw_{m}\bigg)^{\alpha}.
\]
Therefore, Lemma~\ref{lem:char} applied to $s_m=1$ gives \eqref{eq:Concave} as desired.
\end{proof}

\begin{proof}[Proof of Theorem \ref{thm:Concave}]
We rewrite
\[
\|\A_{\S}^r(\overrightarrow{f\sigma})\|_{L^{p}(\nu)}
=\bigg\|\sum_{Q\in \S}\bigg(\prod_{i=1}^{m}\langle f_i \sigma_i \rangle_{Q}\bigg)^r1_{Q}\bigg\|_{L^{\frac{p}{r}}(\sigma_{m+1})}^{\frac1r}.
\]
Taking $m=m+1$, $w_{i}=\sigma_{i}$, $w_{m+1}=\nu$, $\lambda_{i,Q}=\left(\langle f_{i}\rangle_{Q}^{\sigma_{i}}\right)^{r}$,
$s_{i}=r$ , and $\alpha=\frac{r}{p}=\sum_{i=1}^{m}\frac{r}{p_i}$, we have $\tilde{q_{i}}:=r-r/p_{i}$ and by Lemma \ref{lem:Concave}
\begin{align*}
& \bigg\|\sum_{Q\in S}\bigg(\prod_{i=1}^{m}\langle f_i\sigma_i\rangle_{Q}\bigg)^r {\bf 1}_{Q}\bigg\|_{L^{\frac{p}{r}}(\nu)}
\lesssim \lfloor \vec{\sigma},\nu \rfloor_{\frac{r}{\vec{p}'},\vec{p},\rho,\frac{r}{p},m+1}\prod_{i=1}^{m}\|(M_{\sigma_{i}}f)^{r}\|_{L^{\frac{p_{i}}{r}}(\sigma_{i})}
\\
&\qquad =\lfloor \vec{\sigma},\nu \rfloor_{\frac{r}{\vec{p}'},\vec{p},\varepsilon,\frac{r}{p},m+1}\prod_{i=1}^{m}\|M_{\sigma_i}f\|_{L^{p_i}(\sigma_{i})}^{r}
\lesssim \lfloor\vec{\sigma},\nu \rfloor_{\frac{r}{\vec{p}'},\vec{p},\varepsilon,\frac{r}{p},m+1}\prod_{i=1}^{m}\|f\|_{L^{p_i}(\sigma_i)}^r.
\end{align*}
Hence,
\begin{equation*}
\|\A_{\S}^r(\overrightarrow{f\sigma})\|_{L^p(\nu)}
=\bigg\|\sum_{Q \in \S}\bigg(\prod_{i=1}^{m} \langle f_i \sigma_i \rangle_{Q} \bigg)^r {\bf 1}_Q \bigg\|_{L^{\frac{p}{r}}(\nu)}^{\frac{1}{r}}
\lesssim \lfloor \vec{\sigma},\nu \rfloor_{\frac{r}{\vec{p}'},\vec{p},\varepsilon,\frac{r}{p},m+1}^{\frac1r} \prod_{i=1}^{m}\|f\|_{L^{p_i}(\sigma_i)}
\end{equation*}
as we wanted to show.
\end{proof}

\section{Mixed weak type estimates}

The goal of this section is devoted to presenting the proof of Theorem \ref{thm:mixed}. To this end, we first establish a Coifman-Fefferman inequality with the precise $A_{\infty}$ weight constant.

\subsection{A Coifman-Fefferman inequality}

\begin{theorem}\label{thm:CF}
Let $\alpha \ge 1$. Then for every $0<p<\infty$ and for every $w\in A_{\infty}$,
\begin{align}\label{eq:CF}
\|S_{\alpha}(\vec{f})\|_{L^{p}(w)} &\lesssim \alpha^{mn} (p+1)
[w]_{A_{\infty}}^{\frac{1}{2}}\|\mathcal{M}(\vec{f})\|_{L^{p}(w)}.
\end{align}
\end{theorem}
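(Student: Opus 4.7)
The plan is to use the sparse domination \eqref{eq:S-sparse} to reduce the problem to a weighted estimate for the sparse square form $\A_\S^{2}$, establish this at $p=2$ with the sharp $A_\infty$ dependence via a principal-cube Carleson argument, and then propagate to all $0<p<\infty$ by a Rubio de Francia-type extrapolation tailored to $A_\infty$ weights. Concretely, by \eqref{eq:S-sparse}, it suffices to prove that for every sparse family $\S$, every $w\in A_\infty$, and every $0<p<\infty$,
\[
\|\A_\S^{2}(\vec f)\|_{L^p(w)}\lesssim (p+1)\,[w]_{A_\infty}^{1/2}\,\|\M(\vec f)\|_{L^p(w)}.
\]

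For the core case $p=2$ I would expand
\[
\|\A_\S^{2}(\vec f)\|_{L^2(w)}^{2}=\sum_{Q\in\S}\prod_{i=1}^m \langle f_i\rangle_Q^{2}\,w(Q)\le \sum_{Q\in\S}a_Q\,w(Q),\qquad a_Q:=\big(\essinf_{Q}\M(\vec f)\big)^{2},
\]
using $\prod_i \langle f_i\rangle_Q \le \essinf_Q \M(\vec f)$, and then establish the Carleson-type $A_\infty$ inequality
\[
\sum_{Q\in\S}a_Q\,w(Q)\le C_n\,[w]_{A_\infty}\int_{\Rn}\sup_{Q\ni x}a_Q\,\mathbf 1_Q(x)\,w(x)\,dx.
\]
The proof uses a stopping-time keyed to the geometric doubling of $a_Q$ (define principal cubes $\pi(Q)$ so that $a_Q\le 2 a_{\pi(Q)}$); within each block $\{Q:\pi(Q)=P\}$ the disjointness of the sparsity sets $\{E_Q\}$ combined with the Fujii--Wilson characterization $[w]_{A_\infty}=\sup_R w(R)^{-1}\int_R M^{\mathcal D}(w\mathbf 1_R)\,dx$ yields $\sum_{Q\subset P}w(Q)\le 2\int_P M^{\mathcal D}(w\mathbf 1_P)\,dx\le 2[w]_{A_\infty}w(P)$; and summing over principal cubes, the geometric growth of $\{a_P\}$ telescopes $\sum_P a_P w(P)\le 2\int \sup_{Q\ni x}a_Q \,w\,dx$. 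Inserting $\sup_{Q\ni x}a_Q\le \M(\vec f)(x)^{2}$ gives $\|\A_\S^{2}(\vec f)\|_{L^2(w)}^{2}\lesssim [w]_{A_\infty}\|\M(\vec f)\|_{L^2(w)}^{2}$, the sharp $[w]_{A_\infty}^{1/2}$ exponent at $p=2$. The passage to general $0<p<\infty$ then follows from extrapolation for $A_\infty$ weights (of Cruz-Uribe--Martell--P\'erez or Duoandikoetxea type) applied to the pair $(\A_\S^{2}(\vec f),\M(\vec f))$, which preserves the $[w]_{A_\infty}^{1/2}$ dependence and introduces the explicit factor $(p+1)$ from the Rubio de Francia iteration.

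The main obstacle is the requirement that the Carleson lemma carry a strictly \emph{linear} dependence on $[w]_{A_\infty}$: any logarithmic or higher power would be inherited by the final bound after taking the square root and would spoil the sharp $1/2$ exponent. This forces the precise use of the Fujii--Wilson constant within the stopping argument, rather than the weaker sharp reverse-H\"older bounds which would cost an extra factor. A secondary difficulty lies in keeping the $p$-dependence polynomial, of order $p+1$, throughout the extrapolation, especially near the endpoints $p\to 0$ and $p\to\infty$, where one must pass through weak-type estimates for $\M(\vec f)$ and $\A_\S^{2}(\vec f)$.
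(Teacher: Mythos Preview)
Your $p=2$ argument via sparse domination and the Carleson/principal-cubes bound is correct and is essentially the paper's own ``sparse approach'' (which the paper in fact runs for all $p\ge 2$ by dualizing $\|\A_\S^2(\vec f)^2\|_{L^{p/2}(w)}$ against $g\in L^{(p/2)'}(w)$ and stopping on $\tau(P)=\prod_i\langle f_i\rangle_P^2\fint_P g\,dw$). The gap is in your passage to $0<p<2$. The $A_\infty$ extrapolation theorems of Cruz-Uribe--Martell--P\'erez and Duoandikoetxea do \emph{not} track the Fujii--Wilson constant $[w]_{A_\infty}$: the Rubio de Francia construction passes through some $A_q$ class of $w$, and the auxiliary weight one builds has its $A_\infty$ constant controlled only via $[w]_{A_q}$, not via $[w]_{A_\infty}$ itself, so the exponent $\tfrac12$ does not survive. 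Concretely, the direct sparse route for $p<2$, using $(\sum a_Q)^{p/2}\le \sum a_Q^{p/2}$, only yields
\[
\|\A_\S^2(\vec f)\|_{L^p(w)}^p\le \sum_{Q\in\S}\big(\essinf_Q\M(\vec f)\big)^{p}w(Q)\lesssim [w]_{A_\infty}\,\|\M(\vec f)\|_{L^p(w)}^p,
\]
i.e.\ exponent $1/p>1/2$; there is no black-box extrapolation in the literature that upgrades this to $[w]_{A_\infty}^{1/2}$ with an explicit $(p+1)$ prefactor, and your last paragraph does not supply one.

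The paper covers the full range $0<p<\infty$ by a different device: it squares the operator and invokes the quantitative Fefferman--Stein inequality of Ortiz-Caraballo--P\'erez--Rela,
\[
\|F\|_{L^{q}(w)}\lesssim (q+1)\,[w]_{A_\infty}\,\|M_\delta^\sharp F\|_{L^{q}(w)},\qquad 0<q<\infty,\ 0<\delta<1,
\]
applied with $F=\widetilde S_\alpha(\vec f)^{2}$ and $q=p/2$, together with the pointwise estimate $M_\gamma^\sharp\big(\widetilde S_\alpha(\vec f)^{2}\big)(x)\lesssim \alpha^{2mn}\,\M(\vec f)(x)^{2}$ (proved from the size/smoothness of $\psi$ and the endpoint $L^1\times\cdots\times L^1\to L^{1/m,\infty}$ bound). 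Taking square roots gives $(p+1)[w]_{A_\infty}^{1/2}$ uniformly in $p$. If you want to rescue the extrapolation route you would essentially have to prove a new quantitative $A_\infty$-extrapolation theorem; short of that, the sharp-maximal-function step is the missing ingredient for $p<2$.
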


\vspace{0.2cm}
\noindent$\bullet$ {\bf Sparse approach for $p\protect\geq2$.}
Considering \eqref{eq:S-sparse}, we are going to show that
\begin{align}\label{eq:ASCF}
\|\A_{\S}^{2}(\vec{f})\|_{L^{p}(w)}
\lesssim[w]_{A_{\infty}}^{\frac{1}{2}}\|\M(\vec{f})\|_{L^{p}(w)}, \quad\forall p \ge 2.
\end{align}
Without loss of generality, we shall assume that $f_{i}\geq0$, $i=1,\ldots,m$. Note that
\begin{align}\label{eq:AS-dual}
\|\A_{\S}^{2}(\vec{f})\|_{L^{p}(w)}^2
=\sup_{\substack{0 \le g \in L^{(p/2)'}(w) \\ \|g\|_{L^{(p/2)'}(w)=1} }}
\bigg|\sum_{Q \in \S} \prod_{i=1}^{m}\langle f_i\rangle_Q^2 \fint_Q g\, dw\, w(Q)\bigg|.
\end{align}
Fix $0 \le g \in L^{(p/2)'}(w)$ with $\|g\|_{L^{(p/2)'}(w)}=1$. We are going to split the sparse family in terms of principal cubes. Set
\[
\tau(P) :=\prod_{i=1}^{m}\langle f_i \rangle_P^2 \fint_{P}g \, dw,
\]
and consider $\mathcal{F}_{0}$ the family of maximal cubes of $\mathcal{S}$. We define
\[
\mathcal{F} :=\bigcup_{i=0}^{\infty}\mathcal{F}_{i} \quad\text{and}\quad
\mathcal{F}_i :=\bigcup_{Q\in\mathcal{F}_{i-1}}\left\{ P\subsetneq Q\text{ maximal}: \tau(P)>2\tau(Q)\right\} .
\]
For this family of cubes, we have that
\begin{align}\label{eq:Sfg}
&\sum_{Q\in\S} \prod_{i=1}^{m}\langle f_i\rangle_Q^2 \fint_{Q}g\, dw \, w(Q)
\nonumber\\
& \leq\sum_{P\in\mathcal{F}} \prod_{i=1}^{m}\langle f_i \rangle_P^2 \fint_{P}g\, dw \sum_{Q\in \S:\pi(Q)=P}w(Q)
\nonumber\\
& \lesssim[w]_{A_{\infty}} \sum_{P\in\mathcal{F}} \prod_{i=1}^{m}\langle f_i \rangle_P^2 \fint_{P}g\, dw\, w(P)
\nonumber\\
& \lesssim[w]_{A_{\infty}} \int_{\Rn} \M(\vec{f})(x)^2 M_{w}g(x)\, w(x)dx
\nonumber\\
& \lesssim[w]_{A_{\infty}} \|\M(\vec{f})^2\|_{L^{p/2}(w)} \|g\|_{L^{(p/2)'}(w)}.
\end{align}
Thus, \eqref{eq:AS-dual} and \eqref{eq:Sfg} immediately lead \eqref{eq:ASCF}.
\qed

\vspace{0.3cm}
\noindent$\bullet$ {\bf $M_{\delta}^{\sharp}$ approach.}
We next deal with the general case $0<p<\infty$. Recall that the sharp maximal function of $f$ is defined by
\[
M_{\delta}^{\sharp}(f)(x):=\sup_{x\in Q} \inf_{c \in \R} \bigg(\fint_{Q}|f^{\delta}-c|dx\bigg)^{\frac{1}{\delta}}.
\]
It was proved in \cite{OCPR} that for every $0<p<\infty$ and $\delta\in(0,1)$,
\begin{align}\label{eq:fMsharp}
\|f\|_{L^{p}(w)}\lesssim(p+1)[w]_{A_{\infty}}\|M_{\delta}^{\sharp}(f)\|_{L^{p}(w)}.
\end{align}

Let $\Phi$ be a fixed Schwartz function such that $\mathbf{1}_{B(0, 1)}(x) \le \Phi(x) \le \mathbf{1}_{B(0, 2)}(x)$. We define
\begin{align}\label{eq:S-def}
\widetilde{S}_{\alpha}(\vec{f})(x):=\bigg(\iint_{\R^{n+1}_{+}} \Phi\Big(\frac{x-y}{\alpha t}\Big) |\psi_t(\vec{f})(y)|^2\frac{dydt}{t^{n+1}}\bigg)^{1/2}.
\end{align}
It is easy to verify that
\begin{align}\label{eq:SSS}
S_{\alpha}(\vec{f})(x) \le \widetilde{S}_{\alpha}(\vec{f})(x) \le S_{2\alpha}(\vec{f})(x).
\end{align}
We note here that
\begin{align}\label{eq:SS-end}
\|\widetilde{S}_{\alpha}(\vec{f})\|_{L^{1/m,\infty}(\Rn)}\lesssim \alpha^{mn}\prod_{j=1}^m \|f_j\|_{L^1(\Rn)}.
\end{align}
In fact, by \cite[Lemma~3.1]{BH} and the endpoint estimate for $S_1$, we get
\begin{align*}
\|\widetilde{S}_{\alpha}(\vec{f})\|_{L^{1/m,\infty}(\Rn)}
&\le \|S_{2\alpha}(\vec{f})\|_{L^{1/m,\infty}(\Rn)}
\\
&\lesssim \alpha^{mn} \|S_{1}(\vec{f})\|_{L^{1/m,\infty}(\Rn)}
\lesssim \alpha^{mn} \prod_{j=1}^m \|f_j\|_{L^1(\Rn)}.
\end{align*}

Now, combining \eqref{eq:SSS}, \eqref{eq:fMsharp} and Lemma \ref{lem:Msharp} below, we conclude that
\begin{align*}
\|S_{\alpha}(\vec{f})\|_{L^{p}(w)}
& \leq\|\widetilde{S}_{\alpha}(\vec{f})\|_{L^{p}(w)}
\leq\|\widetilde{S}_{\alpha}(\vec{f})^{2}\|_{L^{p/2}(w)}^{\frac{1}{2}}\\
 & \leq(p+1)[w]_{A_{\infty}}^{\frac{1}{2}}\|M_{\gamma}^{\sharp}(\widetilde{S}_{\alpha}(\vec{f})^{2})\|_{L^{p/2}(w)}^{\frac{1}{2}}\\
 & \lesssim\alpha^{mn}(p+1)[w]_{A_{\infty}}^{\frac{1}{2}}\|\mathcal{M}(\vec{f})^{2}\|_{L^{p/2}(w)}^{\frac{1}{2}}\\
 & =\alpha^{mn}(p+1)[w]_{A_{\infty}}^{\frac{1}{2}}\|\mathcal{M}(\vec{f})\|_{L^{p}(w)}
\end{align*}
where we have used that for suitable choices of $\gamma$,
\[
M_{\gamma}^{\sharp}(\widetilde{S}_{\alpha,\psi}(\vec{f})^{2})(x)
\lesssim \alpha^{2mn} \M(\vec{f})(x)^{2}, \quad x \in \Rn.
\]

Hence to end the proof of Theorem \ref{thm:CF}, it remains to settle that pointwise estimate.
\begin{lemma}\label{lem:Msharp}
For every $\alpha \geq 1$ and $0<\gamma<\frac{1}{2m}$, we have
\begin{align}
M_{\gamma}^{\sharp}(\widetilde{S}_{\alpha,\psi}(\vec{f})^{2})(x)
\lesssim \alpha^{2mn} \M(\vec{f})(x)^{2}, \quad x \in \Rn.
\end{align}
\end{lemma}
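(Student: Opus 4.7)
Fix $x \in \Rn$ and an arbitrary cube $Q \ni x$ with center $x_Q$ and sidelength $\ell$, and set $Q^\ast := 3Q$. The plan is to produce a constant $c_Q$ so that
\[
\biggl( \fint_Q \bigl| \widetilde{S}_\alpha(\vec f)(z)^2 - c_Q \bigr|^{\gamma}\, dz \biggr)^{1/\gamma} \lesssim \alpha^{2mn}\, \M(\vec f)(x)^2,
\]
which after taking the supremum over $Q \ni x$ yields the desired pointwise bound. Decompose each $f_j = f_j^0 + f_j^\infty$ with $f_j^0 = f_j \mathbf{1}_{Q^\ast}$. The multilinearity of $\psi_t$ gives $\psi_t(\vec f) = \sum_{\vec\beta \in \{0,\infty\}^m} \psi_t(\vec f^{\vec\beta})$, so after expanding the square inside the definition of $\widetilde S_\alpha$,
\[
\widetilde S_\alpha(\vec f)(z)^2 = \sum_{\vec\beta, \vec\beta'} I_{\vec\beta,\vec\beta'}(z), \qquad I_{\vec\beta,\vec\beta'}(z) := \iint_{\R^{n+1}_+} \Phi\!\Big(\tfrac{z-y}{\alpha t}\Big) \psi_t(\vec f^{\vec\beta})(y)\,\overline{\psi_t(\vec f^{\vec\beta'})(y)}\, \tfrac{dy\,dt}{t^{n+1}}.
\]
I would choose $c_Q := \sum_{(\vec\beta,\vec\beta') \neq (\vec 0,\vec 0)} I_{\vec\beta,\vec\beta'}(x_Q)$. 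Using the subadditivity $(a+b)^\gamma \leq a^\gamma + b^\gamma$ for $\gamma \in (0,1]$, the quantity $|\widetilde S_\alpha(\vec f)(z)^2 - c_Q|^\gamma$ splits into a \emph{local} piece $|I_{\vec 0, \vec 0}(z)|^\gamma = \widetilde S_\alpha(\vec f^{\vec 0})(z)^{2\gamma}$ plus a sum of \emph{non-local} pieces $|I_{\vec\beta,\vec\beta'}(z) - I_{\vec\beta,\vec\beta'}(x_Q)|^\gamma$.

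For the local piece, since $2\gamma < 1/m$, I would invoke Kolmogorov's inequality together with the endpoint bound \eqref{eq:SS-end}:
\[
\biggl( \fint_Q \widetilde S_\alpha(\vec f^{\vec 0})(z)^{2\gamma}\, dz \biggr)^{1/\gamma} \lesssim |Q|^{-2m} \,\bigl\|\widetilde S_\alpha(\vec f^{\vec 0})\bigr\|_{L^{1/m,\infty}(\Rn)}^2 \lesssim \alpha^{2mn} \prod_{j=1}^{m} \langle |f_j| \rangle_{Q^\ast}^2 \lesssim \alpha^{2mn}\,\M(\vec f)(x)^2,
\]
where the final inequality uses that $x \in Q \subset Q^\ast$.

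For each non-local term $(\vec\beta,\vec\beta') \neq (\vec 0,\vec 0)$, the key observation is that $z$ enters $I_{\vec\beta,\vec\beta'}$ only through the smooth cutoff $\Phi$, so
\[
I_{\vec\beta,\vec\beta'}(z) - I_{\vec\beta,\vec\beta'}(x_Q) = \iint \bigl[\Phi(\tfrac{z-y}{\alpha t}) - \Phi(\tfrac{x_Q - y}{\alpha t})\bigr]\, \psi_t(\vec f^{\vec\beta})(y)\,\overline{\psi_t(\vec f^{\vec\beta'})(y)}\, \tfrac{dy\,dt}{t^{n+1}}.
\]
Smoothness of $\Phi$ gives $|\Phi(\tfrac{z-y}{\alpha t}) - \Phi(\tfrac{x_Q - y}{\alpha t})| \lesssim \ell/(\alpha t)$, which produces the needed decay in small $t$, while the support of these differences restricts to $|y - x_Q| \lesssim \alpha t$. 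Since $(\vec\beta,\vec\beta') \neq (\vec 0,\vec 0)$, at least one coordinate is $\infty$, forcing some $y_j$ to live in $(Q^\ast)^c$ while $|y - x_Q| \lesssim \alpha t$; the size condition on $\psi$ then forces the integrand to decay in $|y_j - y|$. I would split the $t$-integral at $t \simeq \ell/\alpha$ and the $y_j$-integration dyadically over annuli $2^k \ell \le |y_j - x_Q| \le 2^{k+1}\ell$, bounding each dyadic contribution by a geometric series summing to $\alpha^{2mn} \prod_j \langle |f_j| \rangle_{2^k Q^\ast} \lesssim \alpha^{2mn}\M(\vec f)(x)^2$ (the $\alpha^{2mn}$ arising from the volumes $(\alpha t)^n$ absorbed in the $t$-integration, tracked through the size bound applied to both $\psi_t(\vec f^{\vec\beta})$ and $\psi_t(\vec f^{\vec\beta'})$).

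\textbf{Main obstacle.} The delicate point is tracking the exact power of $\alpha$ through the non-local pieces. The $\alpha$-dependent support of $\Phi$ forces the "non-local" integration variables to be within $O(\alpha t)$ of $Q$, so the size estimates must be paired with a careful split of the $t$-integral at the scale $t \simeq \ell/\alpha$, and the dyadic summation must be balanced so that the resulting constant is exactly $\alpha^{2mn}$ rather than a larger power. This matches the $\alpha^{mn}$ loss that appears from $\widetilde S_\alpha$-to-$S_1$ comparison (each factor of $\widetilde S_\alpha$ contributes $\alpha^{mn}$ and there are two factors in the square).
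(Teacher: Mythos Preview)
Your decomposition differs structurally from the paper's, and as written it has a genuine gap in the cross terms $I_{\vec\beta,\vec 0}$ (and $I_{\vec 0,\vec\beta'}$) with $\vec\beta\neq\vec 0$. You claim that since $(\vec\beta,\vec\beta')\neq(\vec 0,\vec 0)$ at least one coordinate is $\infty$ and ``the size condition on $\psi$ then forces the integrand to decay.'' But that decay applies only to the factor $\psi_t(\vec f^{\vec\beta})$ containing the $\infty$-coordinate; the remaining factor $\psi_t(\vec f^{\vec 0})(y)$ is bounded at best by $\prod_j Mf_j(y)$, which is \emph{not} dominated by $\M(\vec f)(x)$ for $y$ near $Q$. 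In the small-$t$ regime $t\lesssim\ell/\alpha$ (where your $\Phi$-difference bound $\ell/(\alpha t)$ is useless and only the trivial bound $1$ is available) you are left to control
\[
\int_0^{\ell/\alpha}\int_{|y-z|\le 2\alpha t}\Big(\frac{t}{\ell}\Big)^{\delta}\M(\vec f)(x)\,\prod_{j=1}^m Mf_j(y)\,\frac{dy\,dt}{t^{n+1}},
\]
and $\int_{B(z,2\alpha t)}\prod_j Mf_j(y)\,dy$ cannot in general be bounded by $(\alpha t)^n\M(\vec f)(x)$. A similar obstruction appears for large $t$, since the support of the $\Phi$-difference then allows $y$ far from $Q$, where neither factor is controlled by averages at $x$. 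Your dyadic summation never closes for these mixed terms.

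The paper avoids precisely this problem by \emph{not} expanding the square into cross terms. It first splits the integration domain of $\widetilde S_\alpha(\vec f)^2$ into the Carleson box $T(2Q)=2Q\times(0,2\ell(Q))$ and its complement, writing $\widetilde S_\alpha(\vec f)^2=E(\vec f)+F(\vec f)$ and choosing $c_Q=F(\vec f)(x_Q)$. On $T(2Q)$ it uses the crude inequality $|\sum_{\vec\alpha}\psi_t(\vec f^{\vec\alpha})|^2\lesssim\sum_{\vec\alpha}|\psi_t(\vec f^{\vec\alpha})|^2$, so every surviving term is a \emph{diagonal} square; when $\vec\alpha\neq\vec 0$, the restriction $y\in 2Q$, $t<2\ell$ together with $\operatorname{supp} f_j^\infty\subset(8Q)^c$ forces $|\psi_t(\vec f^{\vec\alpha})(y)|\lesssim(t/\ell)^{\delta}\M(\vec f)(x)$, and $(t/\ell)^{2\delta}$ integrates against $dt/t$ over $(0,2\ell)$. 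On the complement of $T(2Q)$ the oscillation $|F(\vec f)(z)-F(\vec f)(x_Q)|$ is handled directly (no function decomposition), via the estimate \cite[eq.~(4.6)]{BH}. Your local piece $I_{\vec 0,\vec 0}$ via Kolmogorov and \eqref{eq:SS-end} is fine and matches the paper; the difficulty is entirely in how you organized the non-local part.
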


\begin{proof}
Let $x \in Q$. It suffices to show that for some $c_{Q}$ chosen later
\begin{equation}\label{eq:JSM}
\mathcal{J}:=\bigg(\fint_{Q}|\widetilde{S}_{\alpha}(\vec{f})^{2}(x)-c_{Q}|^{\gamma}dx\bigg)^{\frac{1}{\gamma}}
\lesssim \alpha^{2mn}\mathcal{M}(\vec{f})(x)^{2}.
\end{equation}
For a cube $Q\subset\R^n$, we set $T(Q)=Q\times(0,\ell(Q))$. We then write
\[
\widetilde{S}_{\alpha,\psi}(\vec{f})^{2}(x) =E(\vec{f})(x)+F(\vec{f})(x),
\]
where
\begin{align*}
E(\vec{f})(x) &:=\iint_{T(2Q)}\Phi\Big(\frac{x-y}{\alpha t}\Big)|\psi_{t}(\vec{f})(y)|^{2}\frac{dydt}{t^{n+1}},
\\
F(\vec{f})(x) &:= \iint_{\mathbb{R}_{+}^{n+1}\backslash T(2Q)}\Phi\Big(\frac{x-y}{\alpha t}\Big)|\psi_{t}(\vec{f})(y)|^{2}\frac{dydt}{t^{n+1}}.
\end{align*}
Let us choose $c_{Q}=F(\vec{f})(x_{Q})$ where $x_{Q}$ is the center
of $Q$. Then we have that
\begin{align}\label{eq:JJJ}
\mathcal{J}& \lesssim\bigg(\fint_{Q}|E(\vec{f})(x)|^{\gamma}dx\bigg)^{\frac{1}{\gamma}}
+\bigg(\fint_{Q}|F(\vec{f})(x)-F(\vec{f})(x_{Q})|^{\gamma}dx\bigg)^{\frac{1}{\gamma}}
=:\mathcal{J}_1+ \mathcal{J}_2.
\end{align}
Let us first focus on $\mathcal{J}_1$. Set $\vec{f}^0:=(f^0_1,\dotsc,f^0_m)$, $f_{i}^{0}=f_{i}\chi_{Q^{*}}$, and $f_{i}^{\infty}=f_{i}\chi_{(Q^{*})^{c}}$, $i=1,\dots,m$, where $Q^{*}=8Q$. Then we have
\begin{equation}\label{eq:E0Ea}
E(\vec{f})(x)\lesssim E(\vec{f}^0)(x)+\sum_{\alpha\in\mathcal{I}_{0}}E(f_{1}^{\alpha_1},\ldots,f_{m}^{\alpha_m})(x),
\end{equation}
where $\mathcal{I}_{0}:=\{\alpha=(\alpha_{1},\dots,\alpha_{m}):\,\alpha_{i}\in\{0,\infty\},\ \text{ and at least one \ensuremath{\alpha_{i}\neq0}}\}$. Using Kolmogorov's inequality and \eqref{eq:SS-end}, we have
\begin{align}\label{eq:Ef0}
\bigg(\fint_{Q}|E(\vec{f}^0)(x)|^{\gamma} & dx\bigg)^{\frac{1}{\gamma}}
\leq\bigg(\fint_{Q}|\widetilde{S}_{\alpha,\psi}(\vec{f}^0)|^{2\gamma}dx\bigg)^{\frac{2}{2\gamma}}
\nonumber\\
& \lesssim\|\widetilde{S}_{\alpha,\psi}(\vec{f}^0)\|_{L^{1/m,\infty}(Q,\frac{dx}{|Q|})}^{2}
\lesssim \alpha^{2mn}\bigg(\prod_{j=1}^{m} \fint_{Q} |f_{j}| dx\bigg)^{2}.
\end{align}
On the other hand, for each $\alpha \in\mathcal{I}_{0}$,
\begin{align}\label{eq:EFa-1}
\bigg(\fint_{Q}|E(\vec{f}^{\alpha})(x)|^{\gamma}dx\bigg)^{\frac{1}{\gamma}}
&\lesssim\frac 1{|Q|}\int_{\Rn}\iint_{T(2Q)}\Phi\Big(\frac{x-y}{\alpha t}\Big)|\psi_{t}(\vec{f}^{\alpha})(y)|^{2}\frac{dydt}{t^{n+1}}dx
\nonumber\\
&\lesssim\frac{1}{|Q|}\iint_{T(2Q)} (\alpha t)^n |\psi_{t}(\vec{f}^{\alpha})(y)|^{2}\frac{dydt}{t^{n+1}},
\end{align}
since $\int_{\mathbb{R}^{n}}\Phi\big(\frac{x-y}{\alpha t}\big)dx\leq c_{n}(\alpha t)^{n}$. By size estimate, for $y\in2Q$ and $\alpha \in\mathcal{I}_{0}$, one has
\begin{align}\label{eq:EFa-2}
|\psi_{t}(\vec{f}^{\alpha})(y)| & \lesssim \bigg(\frac{t}{\ell(Q)}\bigg)^{\delta}
\sum_{k=0}^{\infty} 2^{-k\delta} \bigg(\prod_{j=1}^{m} \fint_{2^{k}Q}|f_j| dx \bigg).
\end{align}
Then, \eqref{eq:EFa-1} and \eqref{eq:EFa-2} give that for every $\alpha \in\mathcal{I}_{0}$,
\begin{align} \label{eq:Efa}
&\bigg(\fint_{Q} |E(\vec{f}^{\alpha})(x)|^{\gamma}dx\bigg)^{\frac{1}{\gamma}}
\nonumber\\
& \lesssim \bigg[\sum_{k=0}^{\infty} 2^{-k\delta} \bigg(\prod_{j=1}^{m}\fint_{2^k Q}|f_j|\bigg)\bigg]^{2}
\frac{\alpha^n}{|Q|} \iint_{T(2Q)} \bigg(\frac{t}{\ell(Q)}\bigg)^{2\delta}dy \frac{dt}{t}
\nonumber\\
& \lesssim \alpha^{n}\bigg[\sum_{k=0}^{\infty} 2^{-k\delta}\bigg(\prod_{j=1}^{m}\fint_{2^kQ}|f_j|\, dx\bigg)\bigg]^{2}
\nonumber\\
& \lesssim \alpha^{n}\sum_{k=0}^{\infty} 2^{-k\delta} \bigg(\prod_{j=1}^{m}\fint_{2^k Q}|f_j| \, dx\bigg)^{2},
\end{align}
where the Cauchy-Schwarz inequality was used in the last inequality. Gathering \eqref{eq:E0Ea}, \eqref{eq:Ef0} and \eqref{eq:Efa}, we obtain
\begin{align}\label{eq:J1}
\mathcal{J}_1 \lesssim \alpha^{2mn} \M(\vec{f})(x).
\end{align}
To complete the proof it remains to provide a bound for $\mathcal{J}_2$. From \cite[eq. (4.6)]{BH}, we have that for any $x\in Q$,
\begin{equation}\label{eq:J2}
|F(\vec{f})(x)-F(\vec{f})(x_{Q})|
\lesssim \alpha^{2mn}\sum_{k=0}^{\infty} 2^{-k\delta} \bigg(\prod_{j=1}^m \fint_{2^k Q} |f_j|\, dx \bigg)^2,
\end{equation}
Hence, \eqref{eq:JSM} is a consequence of \eqref{eq:JJJ}, \eqref{eq:J1} and \eqref{eq:J2}.
\end{proof}

\subsection{Proof of Theorem \ref{thm:mixed}}
In view of \eqref{eq:gSS} and $\lambda>2m$, it is enough to present the proof of \eqref{eq:Smixed}. We use a hybrid of the arguments in \cite{CMP} and \cite{LOPi}. Define
\begin{align*}
\mathcal{R}h(x)=\sum_{j=0}^{\infty} \frac{T^j_{u} h(x)}{2^j K_0^j},
\end{align*}
where $K_0>0$ will be chosen later and $T_{u}f(x) := M(f u)(x)/u(x)$ if $u(x) \neq 0$, $T_{u}f(x)=0$ otherwise. It immediately yields that
\begin{align}\label{eq:R-1}
h \leq \mathcal{R}h \quad
\text{and}\quad T_u(\mathcal{R}h) \leq 2K_0 \mathcal{R}h.
\end{align}
Moreover, we claim that for some $r>1$,
\begin{align}\label{eq:R-2}
\mathcal{R}h \cdot u v^{\frac{1}{mr'}} \in A_{\infty} \quad\text{and}\quad
\|\mathcal{R}h\|_{L^{r',1}(u v^{\frac1m})} \leq 2 \|h\|_{L^{r',1}(u v^{\frac1m})}.
\end{align}
The proofs will be given at the end of this section.

Note that
\begin{equation}\label{e:Lpq}
\|f^q\|_{L^{p,\infty}(w)}= \|f\|^q_{L^{pq,\infty}(w)}, \ \ 0<p,q<\infty.
\end{equation}
This implies that
\begin{align*}
& \bigg\| \frac{S_{\alpha}(\vec{f})}{v}
\bigg\|_{L^{\frac{1}{m},\infty}(u v^{\frac1m})}^{\frac{1}{mr}}
= \bigg\| \bigg(\frac{S_{\alpha}(\vec{f})}{v}\bigg)^{\frac{1}{mr}}
\bigg\|_{L^{r,\infty}(u v^{\frac1m})}
\\ 
&=\sup_{\|h\|_{L^{r',1}(u v^{\frac1m})}=1}
\bigg|\int_{\Rn} |S_{\alpha}(\vec{f})(x)|^{\frac{1}{mr}} h(x)
u(x) v(x)^{\frac{1}{mr'}} dx  \bigg|
\\ 
&\leq \sup_{\|h\|_{L^{r',1}(u v^{\frac1m})}=1}
\int_{\Rn} |S_{\alpha}(\vec{f})(x)|^{\frac{1}{mr}} \mathcal{R}h(x)
u(x) v(x)^{\frac{1}{mr'}} dx.
\end{align*}
Invoking Theorem \ref{thm:CF} and H\"{o}lder's inequality, we obtain
\begin{align*}
& \int_{\Rn} |S_{\alpha}(\vec{f})(x)|^{\frac{1}{mr}} \mathcal{R}h(x)
u(x) v(x)^{\frac{1}{mr'}} dx
\\ 
& \lesssim
\int_{\Rn} \M(\vec{f})(x)^{\frac{1}{mr}} \mathcal{R}h(x)
u(x) v(x)^{\frac{1}{mr'}} dx
\\ 
& =
\int_{\Rn} \bigg(\frac{\M(\vec{f})(x)}{v(x)}\bigg)^{\frac{1}{mr}}
\mathcal{R}h(x) u(x) v(x)^{\frac{1}{m}} dx
\\ 
& \leq \bigg\|\bigg(\frac{\M(\vec{f})}{v}\bigg)^{\frac{1}{mr}}
\bigg\|_{L^{r,\infty}(u v^{\frac1m})}
\|\mathcal{R}h\|_{L^{r',1}(u v^{\frac1m})}
\\ 
& \leq \bigg\|\frac{\M(\vec{f})}{v}\bigg\|_{L^{\frac1m,\infty}
(u v^{\frac1m})}^{\frac{1}{mr}}
\|h\|_{L^{r',1}(u v^{\frac1m})},
\end{align*}
where we used \eqref{e:Lpq} and \eqref{eq:R-2} in the last inequality. Here we need to apply the weighted mixed weak type estimates for $\M$ proved in Theorems 1.4 and 1.5 in \cite{LOPi}. Consequently, collecting the above estimates,
we get the desired result
\begin{equation*}
\bigg\| \frac{S_{\alpha}(\vec{f})}{v}
\bigg\|_{L^{\frac{1}{m},\infty}(u v^{\frac1m})}
\lesssim \bigg\| \frac{\M(\vec{f})}{v}
\bigg\|_{L^{\frac1m,\infty}(u v^{\frac1m})}
\lesssim \prod_{i=1}^m \| f_i \|_{L^1(w_i)}.
\end{equation*}

It remains to show our foregoing claim \eqref{eq:R-2}. The proof follows the same scheme of that in \cite{CMP}. For the sake of completeness we here give the details. Together with Lemma \ref{lem:weight}, the hypothesis $(1)$ or $(2)$ indicates that $u \in A_1$ and $v^{\frac1m} \in A_{\infty}$. The former implies that
\begin{equation}\label{e:S-1}
\|T_{u}f\|_{L^{\infty}(u v^{\frac1m})}
\leq [u]_{A_1} \|f\|_{L^{\infty}(u v^{\frac1m})}.
\end{equation}
The latter yields that $v^{\frac1m} \in A_{q_0}$ for some $q_0>1$. It follows from $A_p$ factorization theorem that there exist
$v_1,v_2 \in A_1$ such that $v^{\frac1m}=v_1 v_2^{1-q_0}$.

Additionally, it follows from Lemma 2.3 in \cite{CMP} that if $u_1,u_2 \in A_1$, then there exists $\epsilon_0=\epsilon_0([u_1]_{A_1},[u_2]_{A_1}) \in (0,1)$ such that $u_1 v_1^{\epsilon} \in A_{p_1}$ and $u_2 v_2^{\epsilon} \in A_{p_2}$ for any $0 < \epsilon < \epsilon_0$, $v_1 \in A_{p_1}$ and $v_2 \in A_{p_2}$, $1 \leq p_1,p_2 < \infty$. Then $u v_2^{\frac{q_0-1}{p_0-1}} \in A_1$ if we set $p_0>1+(q_0-1)/{\epsilon_0}$. Thus, we have
\begin{align*}
u^{1-p_0} v^{\frac1m}
=v_1 \Big(u v_2^{\frac{q_0-1}{p_0-1}}\Big)^{1-p_0} \in A_{p_0}.
\end{align*}
It immediately implies that
\begin{align}\label{e:S-2}
\|T_{u}f\|_{L^{p_0}(u v^{\frac1m})}
= \|M(fu)\|_{L^{p_0}(u^{1-p_0} v^{\frac1m})}
\leq c_1 \|f\|_{L^{p_0}(u v^{\frac1m})}.
\end{align}
By \eqref{e:S-1}, \eqref{e:S-2} and Marcinkiewicz interpolation in \cite[~Proposition A.1]{CMP}, we have $T_u$ is bounded on
$L^{p,1}(u v^{\frac1m})$ for all $p \in (p_0,\infty)$ with the constant
\begin{align*}
K(p)=2^{\frac1p} \bigg(c_1 \Big(\frac{1}{p_0}-\frac{1}{p}\Big)^{-1} + c_2\bigg),
\end{align*}
and $c_2:=[v]_{A_1}$. Note that $K(p)$ is decreasing with respect to $p$. Hence, we obtain
\begin{equation}\label{e:Lp1}
\|T_{u}f\|_{L^{p,1}(u v^{\frac1m})} \leq K_0 \|f\|_{L^{p,1}(u v^{\frac1m})}, \ \forall p \geq 2p_0
\end{equation}
where $K_0 := 4p_0(c_1+c_2) > K(2p_0) \geq K(p)$.

The inequality \eqref{eq:R-1} indicates that $\mathcal{R}h \cdot u \in A_1$ with $[\mathcal{R}h \cdot u]_{A_1} \leq 2K_0$.
Let $0<\epsilon<\min\{\epsilon_0,\frac{1}{2p_0}\}$, and $r=(\frac{1}{\epsilon})'$. Then $(\mathcal{R}h \cdot u) v_1^{\epsilon} \in A_1$, and the second inequality in \eqref{eq:R-2} follows from \eqref{e:Lp1}. By $A_p$ factorization theorem again, we obtain
\begin{align*}
\mathcal{R}h \cdot u v^{\frac{1}{mr'}}
=[(\mathcal{R}h \cdot u) v_1^{\epsilon}] \cdot v_2^{1-[(q_0-1)\epsilon+1]}
\in A_{(q_0-1)\epsilon+1} \subset A_{\infty}.
\end{align*}
The proof is complete.
\qed

\section{Local decay estimates}

To show Theorem \ref{thm:local}, we need the following Carleson embedding theorem from \cite[Theorem~4.5]{HP}.
\begin{lemma}\label{lem:Car-emb}
Suppose that the sequence $\{a_Q\}_{Q \in \D}$ of nonnegative numbers satisfies the Carleson packing condition
\begin{align*}
\sum_{Q \in \D:Q \subset Q_0} a_Q \leq A w(Q_0),\quad\forall Q_0 \in \D.
\end{align*}
Then for all $p \in (1, \infty)$ and $f \in L^p(w)$,
\begin{align*}
\bigg(\sum_{Q \in \D} a_Q \Big(\frac{1}{w(Q)} \int_{Q} f(x) w \ dx\Big)^p \bigg)^{\frac1p}
\leq A^{\frac1p} p' \|f\|_{L^p(w)}.
\end{align*}
\end{lemma}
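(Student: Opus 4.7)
The plan is to reduce the weighted Carleson embedding to a layer-cake integration against the dyadic weighted maximal function, and then invoke Doob's inequality in the weighted setting. Set
\[
M^{\D}_{w} f(x) := \sup_{Q \in \D,\, Q \ni x} \frac{1}{w(Q)} \int_{Q} |f|\, w\, dx,
\]
and write $\langle f \rangle^{w}_{Q} := \frac{1}{w(Q)}\int_{Q} f\, w\, dx$. By homogeneity I may assume $f \ge 0$.

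The key observation is that for each $\lambda > 0$, the collection $\mathcal{Q}_{\lambda} := \{Q \in \D : \langle f \rangle^{w}_{Q} > \lambda\}$ is organized by its maximal elements $\{Q^{\lambda}_{j}\}_{j}$, which are pairwise disjoint and whose union is exactly the super-level set $\{M^{\D}_{w} f > \lambda\}$. Every $Q \in \mathcal{Q}_{\lambda}$ is contained in some $Q^{\lambda}_{j}$, so the Carleson packing hypothesis yields
\[
\sum_{Q \in \mathcal{Q}_{\lambda}} a_{Q}
\le \sum_{j} \sum_{Q \subset Q^{\lambda}_{j}} a_{Q}
\le A \sum_{j} w(Q^{\lambda}_{j})
= A\, w\bigl(\{M^{\D}_{w} f > \lambda\}\bigr).
\]

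Next I would apply the layer-cake formula to the quantity on the left of the desired estimate:
\[
\sum_{Q \in \D} a_{Q} (\langle f \rangle^{w}_{Q})^{p}
= p \int_{0}^{\infty} \lambda^{p-1} \Big(\sum_{Q \in \mathcal{Q}_{\lambda}} a_{Q}\Big)\, d\lambda
\le A\, p \int_{0}^{\infty} \lambda^{p-1}\, w\bigl(\{M^{\D}_{w} f > \lambda\}\bigr)\, d\lambda
= A\, \|M^{\D}_{w} f\|_{L^{p}(w)}^{p}.
\]
Finally I would invoke the weighted Doob inequality for the dyadic maximal operator with respect to the measure $w\,dx$, namely
\[
\|M^{\D}_{w} f\|_{L^{p}(w)} \le p'\, \|f\|_{L^{p}(w)}, \qquad 1 < p < \infty,
\]
which is just Doob's martingale maximal inequality for the filtration $\{\sigma(\D_{k})\}$ against the $\sigma$-finite measure $w\,dx$; taking $p$-th roots yields the stated constant $A^{1/p} p'$.

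The only genuinely delicate point is obtaining the sharp constant $p'$ in the weighted dyadic maximal inequality rather than the larger constant that would come out of a generic Marcinkiewicz interpolation. I would handle this by reinterpreting $M^{\D}_{w}$ as the maximal function of the martingale $(\mathbb{E}_{k}^{w} f)_{k \in \Z}$, where $\mathbb{E}_{k}^{w} f = \sum_{Q \in \D_{k}} \langle f \rangle^{w}_{Q}\, \mathbf{1}_{Q}$, and applying the classical Doob inequality $\|\sup_{k} \mathbb{E}_{k}^{w} f\|_{L^{p}(w)} \le p'\, \|f\|_{L^{p}(w)}$, which holds with the sharp constant $p'$ for any $\sigma$-finite measure. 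Everything else is a routine layer-cake computation and an appeal to the packing hypothesis.
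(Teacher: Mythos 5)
The paper does not give its own proof of this lemma; it cites \cite[Theorem~4.5]{HP} directly. Your proof is a correct and complete proof of the statement, and it follows the canonical route: a layer-cake reduction, the stopping-cube decomposition of the super-level sets of the weighted dyadic maximal function combined with the Carleson packing hypothesis and disjointness of the maximal stopping cubes, and finally the weighted dyadic (Doob) maximal inequality $\|M^{\D}_{w}f\|_{L^p(w)}\le p'\|f\|_{L^p(w)}$. This is exactly the argument one finds in the cited reference.

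Two very minor wording points. First, the reduction to $f\ge 0$ is not really a ``homogeneity'' argument; it follows because $|\langle f\rangle^{w}_{Q}|\le\langle|f|\rangle^{w}_{Q}$ and $\|f\|_{L^{p}(w)}=\||f|\|_{L^{p}(w)}$, so the estimate for $|f|$ is at least as strong as the one for $f$. Second, the existence of maximal cubes in $\mathcal{Q}_{\lambda}$ is used implicitly; this is automatic here because the lemma is applied with $\D=\D(Q)$ for a fixed cube $Q$ (so all cubes lie in $Q$), but in a fully general setting one would restrict to finite subfamilies and pass to the limit, or note that $\langle f\rangle^{w}_{R}\to 0$ as $R\uparrow\Rn$ when $w(\Rn)=\infty$. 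Neither point affects the validity of your argument in the context where the lemma is used.
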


We also need a local version of Coifman-Fefferman inequality with the precise $A_p$ norm.
\begin{lemma}
For every $1<p<\infty$ and $w \in A_p$, we have
\begin{align}
\label{eq:CF-local} \|S_{\alpha}(\vec{f})\|_{L^2(Q, w)}
&\leq c_{n,p} \alpha^{mn}[w]_{A_p}^{\frac12} \|\M(\vec{f})\|_{L^2(Q, w)},
\\
\label{eq:CF-local-g} \|g_{\lambda}^*(\vec{f})\|_{L^2(Q, w)}
&\leq \frac{c_{n,p}}{1-2^{-n(\lambda-2m)/2}} [w]_{A_p}^{\frac12} \|\M(\vec{f})\|_{L^2(Q, w)},
\end{align}
for every cube $Q$ and $f_j\in L_c^\infty$ with $\supp f_j\subset Q$ $(j=1,\dots,m)$.
\end{lemma}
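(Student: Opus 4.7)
\emph{Proof proposal.} The plan is to combine the pointwise estimate of Lemma \ref{lem:Msharp} with the local Lerner--Hyt\"onen formula \eqref{eq:mf} applied on the cube $Q$, and then integrate against $w$ using the standard Carleson packing property of sparse families for $A_\infty$ weights.

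Since $S_{\alpha}(\vec f)\le \widetilde S_{\alpha}(\vec f)$ by \eqref{eq:SSS}, it suffices to bound $\|\widetilde S_{\alpha}(\vec f)\|_{L^{2}(Q,w)}$. The bound \eqref{eq:SS-end} ensures that $\widetilde S_{\alpha}(\vec f)^{2}\in L^{1/(2m),\infty}(Q)$ and is therefore a.e. finite, so \eqref{eq:mf} applied to $\widetilde S_{\alpha}(\vec f)^{2}$ on $Q_{0}=Q$ produces a sparse family $\mathcal{S}(Q)\subset\mathcal{D}(Q)$ such that
\[
\bigl|\widetilde S_{\alpha}(\vec f)(x)^{2}-m\bigr|\le 2\sum_{P\in\mathcal{S}(Q)}\omega_{2^{-n-2}}\bigl(\widetilde S_{\alpha}(\vec f)^{2};P\bigr)\mathbf{1}_{P}(x),\qquad \text{a.e. } x\in Q,
\]
where $m:=m_{\widetilde S_{\alpha}(\vec f)^{2}}(Q)$. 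Using \eqref{e:mfQ} and Lemma \ref{lem:Msharp} I would estimate each oscillation by $\omega_{2^{-n-2}}(\widetilde S_{\alpha}(\vec f)^{2};P)\lesssim \alpha^{2mn}\essinf_{P}\mathcal{M}(\vec f)^{2}$. For the median, \eqref{e:mfQ} together with \eqref{eq:SS-end} and the fact that each $f_i$ is supported in $Q$ give
\[
m\le \bigl(\widetilde S_{\alpha}(\vec f)\mathbf{1}_{Q}\bigr)^{*}(|Q|/2)^{2}\lesssim \alpha^{2mn}\prod_{i=1}^{m}\langle|f_{i}|\rangle_{Q}^{2}\lesssim \alpha^{2mn}\essinf_{Q}\mathcal{M}(\vec f)^{2}.
\]

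Integrating against $w$ on $Q$ then gives
\[
\int_{Q}\widetilde S_{\alpha}(\vec f)^{2}\,w\,dx\lesssim \alpha^{2mn}\bigg[\int_{Q}\mathcal{M}(\vec f)^{2}w\,dx+\sum_{P\in\mathcal{S}(Q)}\essinf_{P}\mathcal{M}(\vec f)^{2}\cdot w(P)\bigg].
\]
I would control the second sum via the classical sparse Carleson bound $\sum_{P\in\mathcal{S},\,P\subset E}w(P)\lesssim [w]_{A_{\infty}}w(E)$: a layer-cake in $t$, with $E_{t}=\{\mathcal{M}(\vec f)^{2}>t\}\cap Q$ (which contains every $P$ with $\essinf_{P}\mathcal{M}(\vec f)^{2}>t$), yields
\[
\sum_{P\in\mathcal{S}(Q)}\essinf_{P}\mathcal{M}(\vec f)^{2}\cdot w(P)\lesssim [w]_{A_{\infty}}\int_{Q}\mathcal{M}(\vec f)^{2}w\,dx.
\]
Using $[w]_{A_{\infty}}\le c_{n,p}[w]_{A_{p}}$ and taking square roots establishes \eqref{eq:CF-local}.

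For \eqref{eq:CF-local-g}, I would invoke \eqref{eq:gSS}, apply \eqref{eq:CF-local} to $S_{1}$ and to each $S_{2^{k+1}}$, and sum the resulting geometric series $2^{mn}\sum_{k\ge 0}2^{-kn(\lambda-2m)/2}$; this converges to $(1-2^{-n(\lambda-2m)/2})^{-1}$ precisely because $\lambda>2m$, producing the stated constant. The main obstacle is the quantitative sparse Carleson estimate with the $A_\infty$ constant, but this is classical (Hyt\"onen--P\'erez); everything else is a routine assembly of Lemma \ref{lem:Msharp}, the local formula \eqref{eq:mf}, and the endpoint bound \eqref{eq:SS-end}.
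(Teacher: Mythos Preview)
Your proposal is correct and follows essentially the same route as the paper's proof: both pass to $\widetilde S_\alpha$, apply the local Lerner--Hyt\"onen decomposition \eqref{eq:mf} to $\widetilde S_\alpha(\vec f)^2$ on $Q$, bound the median via \eqref{eq:SS-end} and the oscillations by $\alpha^{2mn}\inf_P\mathcal M(\vec f)^2$, and then control $\sum_{P\in\mathcal S(Q)}\inf_P\mathcal M(\vec f)^2\,w(P)$ using the $A_\infty$ Carleson packing property of sparse families together with $[w]_{A_\infty}\lesssim[w]_{A_p}$. The only cosmetic differences are that the paper packages this last step through the Carleson embedding theorem (Lemma \ref{lem:Car-emb}) rather than your layer-cake argument, and cites \cite[Proposition~4.1]{BH} directly for the oscillation estimate rather than routing through Lemma \ref{lem:Msharp}; the treatment of $g_\lambda^*$ via \eqref{eq:gSS} is identical.
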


\begin{proof}
Let $w \in A_p$ with $1<p<\infty$. Fix a cube $Q \subset \Rn$. Recall the definition of $\widetilde{S}_{\alpha}$ in \eqref{eq:S-def}. Pick $0<\epsilon<\frac{1}{2m}$. By \eqref{e:mfQ}, Kolmogorov's inequality, \eqref{eq:SS-end} and
$f_j\in L_c^\infty$ with $\supp f_j\subset Q$, $j=1,\dots,m$, we have
\begin{align*}
m_{\widetilde{S}_{\alpha}(\vec{f})^2}(Q)
& \lesssim \|\widetilde{S}_{\alpha}(\vec{f})^2\|_{L^{\epsilon}(Q,\frac{dx}{|Q|})}
\lesssim \|\widetilde{S}_{\alpha}(\vec{f})\|_{L^{1/m,\infty}(Q,\frac{dx}{|Q|})}^2
\\ 
& \lesssim \alpha^{2mn}\bigg(\prod_{i=1}^m \fint_{Q} |f_i| dx\bigg)^2
\leq \alpha^{2mn}\inf_{x \in Q} \M(\vec{f})(x)^2,
\end{align*}
which implies that
\begin{align}\label{eq:mSf}
m_{\widetilde{S}_{\alpha}(\vec{f})^2}(Q) w(Q)
\lesssim \alpha^{2mn}\int_{Q} \M(\vec{f})(x)^2 w(x) dx.
\end{align}
On the other hand, from \cite[Proposition~4.1]{BH},
one has for every cube $Q'$,
\begin{align}\label{eq:SM}
\omega_{\lambda}(\widetilde{S}_{\alpha}(\vec{f})^2; Q')
&\lesssim \alpha^{2mn} \sum_{j=0}^{\infty} 2^{-j\delta_0}
\bigg(\prod_{i=1}^m \fint_{2^j Q'} |f_i(y_i)| dy_i\bigg)^2
\nonumber\\
&\lesssim \alpha^{2mn}\sum_{j=0}^{\infty} 2^{-j\delta_0}
\inf_{Q'} \M(\vec{f})^2
\lesssim \alpha^{2mn}\inf_{Q'} \M(\vec{f})^2,
\end{align}
where $0<\delta_0<\min\{\delta, \frac12\}$.
Thus, together with  \eqref{eq:mSf} and \eqref{eq:SM},
the estimate \eqref{eq:mf} applied to $Q_0=Q$ and
$f=\widetilde{S}_{\alpha}(\vec{f})^2$ gives that
\begin{align*}
\|\widetilde{S}_{\alpha}(\vec{f})\|_{L^2(Q, w)}^2
&\lesssim m_{\widetilde{S}_{\alpha}(\vec{f})^2}(Q) w(Q)
+ \sum_{Q' \in \S(Q)} \omega_{2^{-n-2}}(\widetilde{S}_{\alpha}(\vec{f})^2; Q') w(Q')
\\
&\lesssim \alpha^{2mn}\|\M(\vec{f})\|_{L^2(Q, w)}^2
+ \alpha^{2mn}\sum_{Q' \in \S(Q)} \inf_{Q'} \M(\vec{f})^2 w(Q').
\end{align*}
From this and \eqref{eq:SSS}, we see that to obtain \eqref{eq:CF-local}, it suffices to prove
\begin{align}\label{eq:QSQ}
\sum_{Q' \in \S(Q)} \inf_{Q'} \M(\vec{f})^2 w(Q') \lesssim [w]_{A_p} \|\M(\vec{f})\|_{L^2(Q, w)}^2.
\end{align}

Recall that a new version of $A_{\infty}$ was introduced by Hyt\"{o}nen and P\'{e}rez \cite{HP}:
\begin{align*}
[w]'_{A_{\infty}} := \sup_{Q} \frac{1}{w(Q)} \int_{Q} M(w \mathbf{1}_Q)(x) dx.
\end{align*}
By \cite[Proposition~2.2]{HP}, there holds
\begin{align}\label{eq:AiAi}
c_n [w]'_{A_{\infty}} \le [w]_{A_{\infty}} \leq [w]_{A_p}.
\end{align}
Observe that for every $Q'' \in \D$,
\begin{align*}
\sum_{Q' \in \S(Q): Q' \subset Q''} w(Q')
&=\sum_{Q' \in \S(Q): Q' \subset Q''} \langle w \rangle_{Q'} |Q'|
\lesssim \sum_{Q' \in \S(Q): Q' \subset Q''} \inf_{Q'} M(w \mathbf{1}_{Q''}) |E_{Q'}|
\\
&\lesssim \int_{Q''} M(w \mathbf{1}_{Q''})(x) dx
\leq [w]'_{A_{\infty}} w(Q'') \lesssim [w]_{A_p} w(Q''),
\end{align*}
where we used the disjointness of $\{E_{Q'}\}_{Q' \in \S(Q)}$ and \eqref{eq:AiAi}. This shows that the collection $\{w(Q')\}_{Q' \in \S(Q)}$ satisfies the Carleson packing condition with the constant $c_n [w]_{A_p}$. As a consequence, this and Lemma \ref{lem:Car-emb} give that
\begin{align*}
\sum_{Q' \in \S(Q)} \inf_{Q'} \M(\vec{f})^2 w(Q')
&\le \sum_{Q' \in \S(Q)} \bigg(\frac{1}{w(Q')} \int_{Q'} \M(\vec{f})\, \mathbf{1}_Q w\, dx \bigg)^2 w(Q')
\\
&\lesssim [w]_{A_p} \|\M(\vec{f}) \mathbf{1}_Q\|_{L^2(w)}^2
=[w]_{A_p} \|\M(\vec{f}) \mathbf{1}_Q\|_{L^2(Q, w)}^2,
\end{align*}
where the above implicit constants are independent of $[w]_{A_p}$ and $Q$. This shows \eqref{eq:QSQ} and completes the proof of \eqref{eq:CF-local}.

Finally,  the estimate \eqref{eq:CF-local-g} immediately follows from \eqref{eq:CF-local} and the fact that
\begin{equation*}
 g^*_{\lambda}(\vec{f})(x) \le S_{1}(\vec{f})(x)+\sum_{k=0}^{\infty} 2^{-\frac{k\lambda n} {2}}S_{2^{k+1}}(\vec{f})(x).
\end{equation*}
This completes the proof.
\end{proof}

\vspace{0.2cm}
\begin{proof}[\textbf{Proof of Theorem \ref{thm:local}.}]
Let $p>1$ and $r>1$ be chosen later. Define the Rubio de Francia algorithm:
\begin{align*}
\mathcal{R}h=\sum_{k=0}^{\infty} \frac{M^{k}h}{2^k\|M\|^k_{L^{r'}\to L^{r'}}}.
\end{align*}
Then it is obvious that
\begin{align}\label{eq:hRh}
h \le \mathcal{R}h \quad\text{and}\quad \|\mathcal{R}h\|_{L^{r'} (\Rn)} \leq 2 \|h\|_{L^{r'} (\Rn)}.
\end{align}
Moreover, for any nonnegative $h \in L^{r'}(\Rn)$, we have that $\mathcal{R}h \in A_1$ with
\begin{align}\label{eq:Rh-A1}
[\mathcal{R}h]_{A_1} \leq 2 \|M\|_{L^{r'} \to L^{r'}} \leq c_n \ r.
\end{align}

By Riesz theorem and the first inequality in \eqref{eq:hRh}, there exists some nonnegative function $h \in L^{r'}(Q)$ with $\|h\|_{L^{r'}(Q)}=1$ such that
\begin{align}\label{eq:FQ}
\mathscr{F}_Q^{\frac1r} &:= |\{x \in Q:  S_{\alpha}(\vec{f})(x) > t \M(\vec{f})(x)\}|^{\frac1r}
\nonumber \\ 
&= |\{x \in Q:  S_{\alpha}(\vec{f})(x)^2 > t^2 \M(\vec{f})(x)^2\}|^{\frac1r}
\nonumber \\ 
& \leq \frac{1}{t^2} \bigg\| \bigg(\frac{S_{\alpha}(\vec{f})}{\M(\vec{f})}\bigg)^2 \bigg\|_{L^r(Q)}
\leq \frac{1}{t^2} \int_{Q} S_{\alpha}(\vec{f})^2 \, h \, \M(\vec{f})^{-2} dx
\nonumber \\ 
&\leq t^{-2} \|S_{\alpha}(\vec{f})\|_{L^2(Q, w)}^2,
\end{align}
where $w=w_1 w_2^{1-p}$, $w_1= \mathcal{R}h$ and $w_2 = \M(\vec{f})^{2(p'-1)}$. Recall that the m-linear version of Coifmann-Rochberg theorem  \cite[Lemma~1]{OPR} asserts that
\begin{align}\label{eq:C-R}
[(\M(\vec{f}))^{\delta}]_{A_1} \leq \frac{c_n}{1-m\delta}, \quad\forall \delta \in (0, \frac1m).
\end{align}
In view of \eqref{eq:Rh-A1} and \eqref{eq:C-R}, we see that $w_1, w_2 \in A_1$ provided $p>2m+1$. Then the reverse $A_p$ factorization theorem gives that
$w=w_1 w_2^{1-p} \in A_p$ with
\begin{align}\label{eq:w-Ap-r}
[w]_{A_p} \leq [w_1]_{A_1} [w_2]_{A_1}^{p-1} \leq c_n ~ r.
\end{align}
Thus, gathering \eqref{eq:CF-local}, \eqref{eq:FQ} and \eqref{eq:w-Ap-r},
we obtain
\begin{align*}
\mathscr{F}_Q^{\frac1r}
& \le c_{n} t^{-2}\alpha^{2mn} [w]_{A_p} \|\M(\vec{f})\|_{L^2(Q, w)}^2
 \\ 
& = c_{n} t^{-2} \alpha^{2mn}[w]_{A_p} \|\mathcal{R}h\|_{L^1(Q)}
 \\ 
&\le c_{n} t^{-2} \alpha^{2mn}[w]_{A_p} \|\mathcal{R}h\|_{L^{r'}(Q)}  |Q|^{\frac1r}
 \\ 
&\le c_{n} t^{-2}\alpha^{2mn} [w]_{A_p} \|h\|_{L^{r'}(Q)}  |Q|^{\frac1r}
 \\ 
&\le c_{n} r t^{-2}\alpha^{2mn} |Q|^{\frac1r}.
\end{align*}

Consequently, if $t> \sqrt{c_n e}\,\alpha^{mn}$, choosing $r>1$
so that $t^2/e = c_n \alpha^{2mn}r$, we have
\begin{align}\label{eq:FQr-1}
\mathscr{F}_Q \le (c_n \alpha^{2mn}r t^{-2})^r |Q| = e^{-r} |Q|
= e^{-\frac{t^2}{c_n e\alpha^{2mn}}} |Q|.
\end{align}
If $0<t \le \sqrt{c_n e}\alpha^{mn}$, it is easy to see that
\begin{equation}\label{eq:FQr-2}
\mathscr{F}_Q \le |Q| \le e \cdot e^{-\frac{t^2}{c_n e\alpha^{2mn}}} |Q|.
\end{equation}
Summing \eqref{eq:FQr-1} and \eqref{eq:FQr-2} up, we deduce that
\begin{equation*}
\mathscr{F}_Q=|\{x \in Q:  S_{\alpha}(\vec{f})(x) > t \M(\vec{f})(x)\}|
\le c_1 e^{-c_2 t^2/\alpha^{2mn}} |Q|,\quad\forall t>0.
\end{equation*}
This proves \eqref{eq:local-1}.

To obtain \eqref{eq:local-2}, we use the same strategy and \eqref{eq:CF-local-g} in place of \eqref{eq:CF-local}.
\end{proof}

Next we present another proof of Theorem \ref{thm:local}. In view of \eqref{eq:S-sparse} and \eqref{eq:g-sparse}, following the approach in \cite{PRR}, it suffices to prove the following.
\begin{lemma}
There exist $c_1>0$ and $c_2>0$ such that for every sparse family $\S \subset \D$ and for every cube $Q_0$,
\[
|\{x\in Q_0: \A_{\S}^{2}(\vec{f})>t \mathcal{M}(\vec{f})\}| \leq c_1 e^{-c_2 t^2} |Q_0|.
\]
where $\vec{f}=(f_1,\ldots,f_m)$ are supported on $Q_0$.
\end{lemma}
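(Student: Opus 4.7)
Since the $f_{i}$ are supported in $Q_{0}$, only cubes $Q\subset Q_{0}$ contribute, so assume $\S\subset \D(Q_{0})$. The starting observation is the pointwise linearization
\[
\A_{\S}^{2}(\vec{f})(x)^{2}
=\sum_{Q\in \S,\,Q\ni x}\prod_{i=1}^{m}\langle|f_{i}|\rangle_{Q}^{2}
\le \M(\vec{f})(x)\,\A_{\S}^{1}(\vec{f})(x),
\]
which follows from $\prod_{i}\langle|f_{i}|\rangle_{Q}\le \M(\vec{f})(x)$ whenever $x\in Q$. Consequently
\[
\{\A_{\S}^{2}(\vec{f})>t\,\M(\vec{f})\}\cap Q_{0}
\subset
\{\A_{\S}^{1}(\vec{f})>t^{2}\M(\vec{f})\}\cap Q_{0},
\]
so the Gaussian bound reduces to proving the linear-exponent estimate
\[
\big|\{x\in Q_{0}:\A_{\S}^{1}(\vec{f})(x)>s\,\M(\vec{f})(x)\}\big|\le c_{1}e^{-c_{2}s}|Q_{0}|,\qquad s>0.
\]

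The plan is to obtain this bound by a good-$\lambda$ iteration in the spirit of \cite{PRR}. Writing $E_{s}=\{\A_{\S}^{1}(\vec{f})>s\,\M(\vec{f})\}\cap Q_{0}$, the target is to produce absolute constants $A>0$ and $\theta\in(0,1)$ such that $|E_{s+A}|\le\theta\,|E_{s}|$ for every $s\ge 0$; iterating $k\simeq s/A$ times then gives $|E_{s}|\le\theta^{k}|Q_{0}|$, which is the desired exponential decay. The iteration step is carried out via a Calder\'on-Zygmund stopping: extract the maximal dyadic cubes $\{R_{j}\}\subset E_{s}$, and on each $R_{j}$ decompose
\[
\A_{\S}^{1}(\vec{f})\,\mathbf{1}_{R_{j}}
=\bigg(\sum_{Q\in \S,\,Q\supsetneq R_{j}}\prod_{i}\langle|f_{i}|\rangle_{Q}\bigg)\mathbf{1}_{R_{j}}
+\sum_{Q\in \S,\,Q\subseteq R_{j}}\prod_{i}\langle|f_{i}|\rangle_{Q}\,\mathbf{1}_{Q}
=:\mathrm{I}_{j}+\mathrm{II}_{j}.
\]
The piece $\mathrm{I}_{j}$ is constant on $R_{j}$ and, by maximality of the parent in the stopping, is bounded by $s\,\M(\vec{f})(x)+\prod_{i}\langle|f_{i}|\rangle_{R_{j}}\le(s+1)\,\M(\vec{f})(x)$ on $R_{j}$. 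The local piece $\mathrm{II}_{j}$ is itself a multilinear sparse form on $R_{j}$, and the standard endpoint
\[
\|\mathrm{II}_{j}\|_{L^{1/m,\infty}(R_{j})}
\lesssim \prod_{i=1}^{m}\big(\langle|f_{i}|\rangle_{R_{j}}\,|R_{j}|\big)^{1/m}
\]
combined with the stopping bound $\prod_{i}\langle|f_{i}|\rangle_{R_{j}}\lesssim \inf_{R_{j}}\M(\vec{f})$ yields
\[
\big|\{x\in R_{j}:\mathrm{II}_{j}(x)>A'\,\M(\vec{f})(x)\}\big|\lesssim(A')^{-1/m}|R_{j}|.
\]
Choosing $A'$ large forces the right-hand side to be at most $\theta|R_{j}|$ for some $\theta<1$, and summing over $j$ closes the good-$\lambda$ step with $A=A'+1$.

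The delicate point, and the main obstacle I foresee, is that $\M(\vec{f})$ appears on both sides of the level-set inequality, whereas the linear prototypes in \cite{OPR}, \cite{PRR} use a constant bound such as $\|f\|_{\infty}$ on the right. To handle this, I plan to run the Calder\'on-Zygmund stopping simultaneously with respect to the principal-cube family adapted to the product average $\prod_{i}\langle|f_{i}|\rangle_{\cdot}$, so that $\M(\vec{f})\simeq \prod_{i}\langle|f_{i}|\rangle_{R_{j}}$ on a fixed-density portion of each $R_{j}$; the remaining exceptional set is absorbed into $\theta$. All the other pieces are routine good-$\lambda$ bookkeeping.
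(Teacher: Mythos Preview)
Your plan is considerably more involved than what is needed, and the obstacle you flag at the end is real for your route but entirely avoidable. The pointwise inequality $\prod_{i}\langle|f_{i}|\rangle_{Q}\le\M(\vec f)(x)$ for $x\in Q$, which you invoke once to pass from $\A_{\S}^{2}$ to $\A_{\S}^{1}$, can simply be applied \emph{again}:
\[
\A_{\S}^{2}(\vec f)(x)^{2}
=\sum_{Q\in\S,\,Q\ni x}\prod_{i}\langle|f_{i}|\rangle_{Q}^{2}
\le\M(\vec f)(x)^{2}\sum_{Q\in\S}\mathbf{1}_{Q}(x).
\]
This is the paper's route: one gets $\{\A_{\S}^{2}(\vec f)>t\,\M(\vec f)\}\subset\{\sum_{Q}\mathbf{1}_{Q}>t^{2}\}$, and the sparse counting function obeys $|\{x\in Q':\sum_{Q\in\S,\,Q\subset Q'}\mathbf{1}_{Q}(x)>s\}|\le c\,e^{-\alpha s}|Q'|$ by \cite[Theorem~2.1]{OCPR}. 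No good-$\lambda$ iteration, no principal-cube stopping, no weak $(1,1/m)$ bound for a local piece---the variable-level difficulty you anticipate never arises, because $\M(\vec f)$ is removed \emph{pointwise} before any measure-theoretic argument begins.

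One further correction: your opening claim that ``only cubes $Q\subset Q_{0}$ contribute'' is false. Cubes $Q\in\D$ with $Q\supsetneq Q_{0}$ (or merely intersecting $Q_{0}$, since $Q_{0}$ need not lie in $\D$) still carry nonzero averages of the $f_{i}$. The paper handles this by covering $Q_{0}$ with at most $3^{n}$ cubes $Q_{j}\in\D$ of comparable sidelength and splitting $\S$ into $\{Q\subset Q_{j}\}$, treated as above, and $\{Q\supsetneq Q_{j}\}$. On the latter the support condition gives $\langle|f_{i}|\rangle_{Q}\le(|Q_{0}|/|Q|)\langle|f_{i}|\rangle_{Q_{0}}$, so $\sum_{Q\supsetneq Q_{j}}\prod_{i}\langle|f_{i}|\rangle_{Q}^{2}$ is dominated by a convergent geometric series times $\M(\vec f)(x)^{2}$; this part of the level set is therefore empty once $t$ exceeds a dimensional constant.
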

\begin{proof}
Fix a sparse family $\S \subset \D$ and a cube $Q_0$. First we observe that
\begin{align*}
\mathcal{K}:=|\{x\in Q_0: \A_{\S}^{2}(\vec{f})>t \M(\vec{f})\}|
&=\Big|\Big\{x\in Q_0: \sum_{Q\in\S} \prod_{i=1}^{m}\langle |f_i|\rangle_Q^2 > t^2 \M(\vec{f})^2 \Big\}\Big|.
\end{align*}
Now we consider the family of at most $3^n$ cubes $Q_{j}\in\D$ such that $|Q_{j}|\simeq|Q_{0}|$ and $|Q_{j}\cap Q_{0}|>0$. We have that adding those cubes to $\text{\ensuremath{\mathcal{S}}}$ it remains a sparse family, we shall assume then that $Q_{j}\in\S$. For such $Q_j$, we define
\begin{align*}
T_j^1(\vec{f}):=\sum_{Q\in\S: Q \subset Q_j} \prod_{i=1}^{m}\langle |f_i|\rangle_Q^2 \mathbf{1}_Q \quad\text{and}\quad
T_j^2(\vec{f}):=\sum_{Q\in\S: Q \supsetneq Q_j} \prod_{i=1}^{m}\langle |f_i|\rangle_Q^2 \mathbf{1}_{Q}.
\end{align*}
Then, one has
\begin{align*}
\mathcal{K} &\le \sum_{j=1}^{3^{n}}
\Big|\Big\{x\in Q_j: T_j^1(\vec{f}) + T_j^2(\vec{f}) > t^2 \mathcal{M}(\vec{f})^2 \Big\}\Big|
\\
& \leq\sum_{j=1}^{3^n} \sum_{i=1}^2 \left|\left\{x\in Q_j\,: T_j^i(\vec{f})>c_n t^2 \M(\vec{f})^2\right\}\right|
=:\sum_{j=1}^{3^{n}} (\mathcal{K}_j^1 + \mathcal{K}_j^2).
\end{align*}

We recall that in \cite[Theorem 2.1]{OCPR}, it was established that
\begin{equation}\label{eq:ExpDecay}
\bigg|\bigg\{ x\in Q\,:\sum_{Q'\in\S,\,Q'\subseteq Q}\mathbf{1}_{Q'}(x)>t\bigg\} \bigg|\leq ce^{-\alpha t}|Q|, \quad\forall Q.
\end{equation}
For $\mathcal{K}^1_j$, taking into account \eqref{eq:ExpDecay}, we obtain
\begin{align*}
\mathcal{K}^1_j & \leq\bigg|\bigg\{x\in Q_j\,:\sum_{Q\in\S, Q\subset Q_j} {\bf 1}_{Q}(x)>c_{n}t^2 \bigg\}\bigg|
\leq c e^{-\alpha t^2}|Q_{j}|\simeq c e^{-\alpha t^2}|Q_{0}|.
\end{align*}
For $\mathcal{K}^1_j$, since $\vec{f}$ is supported in $Q_0$, we deduce that
\begin{align*}
\mathcal{K}_j^2 &\leq\Big|\Big\{x\in Q_{j}\,: T_j^2(\vec{f} \cdot {\bf 1}_{Q_0}) > c_n t^2 \prod_{i=1}^m \langle |f_i| \rangle_{Q_0}^2\Big\} \Big|
\\
& \leq \Big|\Big\{x \in Q_j\,: \sum_{Q\in\S, Q\supsetneq Q_j} \Big(\prod_{i=1}^{m} |Q_0|/|Q|\Big)^{2} {\bf 1}_{Q}(x)>c_{n}t^{2}\Big\} \Big|
\\
& \leq \Big|\Big\{ x\in Q_j\,:\sum_{j=1}^{\infty} 2^{-2mj}>c_n t^2 \Big\} \Big|.
\end{align*}
Observe that if $t$ is large enough, then
\[
\Big|\Big\{ x\in Q_{j}\,:\sum_{j=1}^{\infty} 2^{-2mj}>c_{n}t^{2}\Big\}\Big|=0.
\]
Consequently,
\[
\mathcal{K}_j^2 \lesssim e^{-t^2} |Q_0|.
\]
We are done.
\end{proof}


\end{document}